\newtheorem{thm}{Theorem}[section]
\newtheorem{prop}[thm]{Proposition}
\newtheorem{lem}[thm]{Lemma}
\newtheorem{cor}[thm]{Corollary}
\theoremstyle{definition}
\newtheorem{defn}[thm]{Definition}
\newtheorem{rem}[thm]{Remark}
\newtheorem{cons}[thm]{Construction}
\newtheorem{notation}[thm]{Notation}
\renewcommand{\bar}[1]{\overline{#1}}
\newcommand{\boundary}{\partial}
\newcommand{\presentation}[2]{\langle\, {#1} \mid {#2} \,\rangle}
\newcommand{\bigpresentation}[2]{ \bigl\langle \, {#1} \bigm| {#2} \,
                                \bigr\rangle }
\newcommand{\set}[2]{\{\,{#1} \mid {#2} \,\}}
\renewcommand{\emptyset}{\varnothing}
\newcommand{\field}[1]{\mathbb{#1}}
\newcommand{\Z}{\field{Z}}
\newcommand{\R}{\field{R}}
\newcommand{\N}{\field{N}}
\newcommand{\E}{\field{E}}
\newcommand{\RR}{\field{R}}
\newcommand{\XX}{\field{X}}
\newcommand{\cC}{\mathcal{C}}
\newcommand{\cW}{\mathcal{W}}
\newcommand{\cO}{\mathcal{O}}
\DeclareMathOperator{\CAT}{CAT}
\DeclareMathOperator{\Image}{Im}
\newcommand{\la}{\left\langle}
\newcommand{\ra}{\right\rangle}
\definecolor{amethyst}{rgb}{0.6, 0.4, 0.8}
\newcommand{\hide}[1]{}
\newcommand{\Drutu}{Dru{\cb{t}}u}
\begin{document}

\title[Surface group amalgams]{Surface group amalgams that (don't) act on $3$--manifolds}

\author{G. Christopher Hruska}
\author{Emily Stark}
\author{Hung Cong Tran}
\address{Department of Mathematical Sciences\\
University of Wisconsin--Milwaukee\\
P.O.~Box 413\\
Milwaukee, WI 53201\\
USA}
\email{chruska@uwm.edu}
\address{Department of Mathematics\\
Technion - Israel Institute of Technology \\
Haifa 32000 \\
Israel }
\email{emily.stark@technion.ac.il}
\address{Department of Mathematics\\
The University of Georgia\\
1023 D.W. Brooks Drive\\
Athens, GA 30605\\
USA}
\email{hung.tran@uga.edu}

\date{\today}

\begin{abstract}
  We determine which amalgamated products of surface groups identified over multiples of simple closed curves are not fundamental groups of $3$--manifolds. We prove each surface amalgam considered is virtually the fundamental group of a $3$--manifold.  We prove that each such surface group amalgam is abstractly commensurable to a right-angled Coxeter group from a related family. In an appendix, we determine the quasi-isometry classes among these surface amalgams and their related right-angled Coxeter groups. 
\end{abstract}

\subjclass[2000]{%
20F67, % Hyperbolic groups and nonpositively curved groups
20F65} % Geometric group theory
\maketitle

\section{Introduction}
  
  Surface groups have a foundational role in geometric topology and geometric group theory. In this paper, we study a class of amalgamated products of surface groups, which in many ways resemble fundamental groups of $3$--manifolds. Let $\mathcal{C}_{m,n}$ be the collection of amalgamated free products of the form $\pi_1(S_g)*_{\langle a^m=b^n \rangle}\pi_1(S_h)$, where $m \leq n$, and $S_g$ and $S_h$ are closed orientable surfaces of genus $g$ and~$h$ greater than one, and $a$ and $b$ are the homotopy class of an essential simple closed curve on $S_g$ and~$S_h$, respectively. Let 
  $\cC = \bigcup_{m\leq n} \cC_{m,n}$.
  Each group in $\cC$ is the fundamental group of a complex that consists of two closed orientable surfaces $S_g$ and $S_h$ and an annulus, where one boundary component of the annulus is identified to an essential simple closed curve $a$ on $S_g$ by a degree--$m$ map of the circle, and the other boundary component of the annulus is identified to an essential simple closed curve $b$ on $S_h$ by a degree--$n$ map of the circle for positive integers $m \leq n$. An example appears in Figure~\ref{figure:surf_amalgam}.

A primary feature of the surface amalgams in $\cC$ is their strong resemblance in many ways to $3$--manifolds with three different types of geometric decomposition.
More specifically, $\cC$ is divided into three basic families of amalgams.  Amalgams in the first family are word hyperbolic and resemble Kleinian groups.  The amalgams in the second family are hyperbolic relative to virtually abelian subgroups (Klein bottle groups), and strongly resemble $3$--manifolds formed by gluing two hyperbolic components along a cusp torus.
Amalgams in the third family are hyperbolic relative to groups of the form $\presentation{a,b}{a^m = b^n}$, which has a finite-index subgroup isomorphic to $F\times \Z$ for a nonabelian free group $F$.  We note that when $m$ and $m$ are relatively prime, the group $\presentation{a,b}{a^m = b^n}$ is the fundamental group of a torus knot complement in $S^3$.
Surface amalgams of this third family closely resemble ``mixed'' type $3$--manifolds that contain both hyperbolic and Seifert fibered JSJ components. 

However, in spite of their strong resemblance to various $3$--manifolds, 
we prove, for most choices of $m$ and $n$, the surface amalgams in $\cC$ are not the fundamental groups of $3$--manifolds.
   
  \begin{thm} \label{sec1_3man_classification}
  Let $G \cong \pi_1(S_g) *_{\la a^m=b^n \ra} \pi_1(S_h) \in \mathcal{C}_{m,n}$, where $a\in \pi_1(S_g)$ and $b \in \pi_1(S_h)$ are homotopy classes of essential simple closed curves. Then $G$ is the fundamental group of a $3$--manifold if and only if one of the following holds: 
  \begin{enumerate}
   \item $m=n=1$;
   \item $m=1$, $n=2$, and $b$ is the homotopy class of a non-separating curve; or,
   \item $m=n=2$, and $a$ and $b$ are homotopy classes of non-separating curves. 
  \end{enumerate}
 \end{thm}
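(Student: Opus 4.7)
I would prove the two directions of this ``if and only if'' separately.

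For \emph{sufficiency}, I would exhibit an explicit compact orientable $3$-manifold $M$ with $\pi_1(M)\cong G$ in each of the three cases. Case~(1) is the most transparent: let $M$ be the union of the trivial $I$-bundles $S_g\times I$ and $S_h\times I$, glued along an annulus identifying annular neighborhoods of $a\times\{0\}$ and $b\times\{0\}$ in their respective boundaries with matched cores. Van Kampen's theorem yields $\pi_1(M)\cong\pi_1(S_g)\ast_{\la a=b\ra}\pi_1(S_h)$. Cases~(2) and~(3) would be realized by inserting a Seifert fibered ``bridge'' piece between the two thickened surfaces; this bridge may be taken to be a solid torus (for case~(3)) or a twisted $I$-bundle over a Klein bottle (for case~(2)), chosen so that the slopes of the two attaching annuli on its boundary encode the exponents $m$ and $n$. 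After collapsing the bridge's infinite cyclic vertex group the resulting amalgam is the one required. The non-separating hypotheses on $a$ and $b$ are needed precisely to ensure that the slope-$2$ annular attachment can be performed compatibly with the orientability of $M$.

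For \emph{necessity}, suppose $G\cong\pi_1(M)$ for some compact $3$-manifold $M$. After passing to the orientation double cover if needed, I may assume $M$ is orientable. The amalgam description gives $G$ an essential cyclic splitting with vertex groups $\pi_1(S_g)$ and $\pi_1(S_h)$ and edge group $\la a^m=b^n\ra$. By the $3$-dimensional JSJ theory (Jaco-Shalen, Johannson), this algebraic splitting is realized geometrically by a decomposition of $M$ along essential properly embedded annuli, producing pieces with the given surface groups as fundamental groups. The crucial rigidity input is the classical fact that any compact irreducible orientable $3$-manifold whose fundamental group is a closed orientable surface group of genus at least $2$ is homeomorphic to the trivial $I$-bundle over the surface. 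Hence each surface-group piece is $S_g\times I$ or $S_h\times I$, and any essential properly embedded annulus in it is vertical of the form $c\times I$ with core representing a primitive element of the surface group.

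In the simplest decomposition, with a single annulus separating $M$ into the two surface pieces, the primitivity of the edge generator in each vertex group forces $m=n=1$ (case~(1)). To obtain $m=2$ or $n=2$ the geometric decomposition must include a Seifert fibered bridge piece between the surface pieces; enumerating such bridges compatible with cyclic edge groups and surface-group vertices, together with a slope analysis on their torus boundaries, restricts $(m,n)$ to one of $(1,1),(1,2),(2,2)$ and forces the non-separating hypotheses on the appropriate curves. The main technical obstacle is this enumeration: a priori, hyperbolic JSJ pieces or Seifert pieces with more complex base orbifolds could appear, and ruling them out requires combining the cyclic edge constraint with the rigidity of surface-group $I$-bundles, while the non-separating conditions emerge from a sharp slope computation for the required $(1,2)$-annular attachment on the bridge boundary.
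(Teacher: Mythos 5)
Your sufficiency construction for cases (2) and (3) does not produce the group $G$. If you insert a solid torus (or a twisted $I$--bundle over a Klein bottle) as a ``bridge'' and attach its boundary annuli along slopes encoding $m$ and $n$, van Kampen gives a three--vertex graph of groups in which the bridge contributes its own vertex group: for the solid torus you obtain $\pi_1(S_g)*_{a=c^m}\la c\ra *_{c^n=b}\pi_1(S_h)$, a group containing an $m$-th root of $a$, which is not isomorphic to $\pi_1(S_g)*_{\la a^m=b^n\ra}\pi_1(S_h)$; ``collapsing the bridge's infinite cyclic vertex group'' is not an operation that preserves the fundamental group. The paper needs no bridge: when $m=2$ and $a$ is non-separating there is a homomorphism $\pi_1(S_g)\to\Z/2\Z$ sending $a$ to the nontrivial element (this is exactly where non-separation enters --- a separating curve is null-homologous, so no such homomorphism exists; it has nothing to do with orientability of $M$, which in fact is non-orientable in these cases), and the associated twisted $I$--bundle over $S_g$ contains a solid Klein bottle over $N(a_0)$ whose boundary annulus has core freely homotopic to $a_0^2$. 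Gluing that annulus directly to the corresponding annulus on the $I$--bundle over $S_h$ realizes the amalgam over $\la a^m=b^n\ra$ on the nose.

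The necessity direction also has genuine gaps, and it is where your approach diverges most from the paper. You may not pass to the orientation double cover, since that replaces $G$ by an index-two subgroup; the hypothesis is only that $G$ is the fundamental group of some (possibly non-compact, non-orientable) $3$--manifold, so one must first invoke the Compact Core Theorem, cap off spherical boundary, and use prime decomposition plus the Sphere Theorem before any structure theory applies. More seriously, the step ``the algebraic cyclic splitting is realized by a decomposition of $M$ along essential embedded annuli'' is unjustified: the edge group is generated by $a^m=b^n$, which for $m,n\geq 2$ is not primitive and is not represented by an embedded curve, and algebraic $\Z$--splittings need not match the geometric JSJ. Finally, the crucial step --- ruling out all $(m,n)$ outside (1)--(3) by ``enumerating bridges and a slope analysis'' --- is precisely what you leave as an acknowledged obstacle, and it is the heart of the theorem. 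The paper avoids $3$--manifold JSJ theory entirely: it reduces to a proper action on a coarse $PD(3)$ space (by doubling the aspherical compact core), takes a union $W$ of $k$ half-planes in the model geometry of $G$, and applies Kapovich--Kleiner's coarse separation theorem, which yields a cyclic order on the $k$ deep complementary components preserved by $\Stab(W)$; the contradiction is then elementary (for $m\geq2$, $n\geq 3$ the element $b$ would have to act as an $n$--cycle on $n$ of $n+2$ cyclically ordered sheets while fixing two others, impossible in a dihedral group), with a $\Z/2\Z$--homology argument handling the separating cases when $n=2$. If you want to pursue a purely $3$--manifold-topological proof you would need to supply the annulus-realization and enumeration steps in full; as written the argument does not go through.
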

  
      \begin{figure}%[h]
      \begin{overpic}[scale=.7, tics=5]{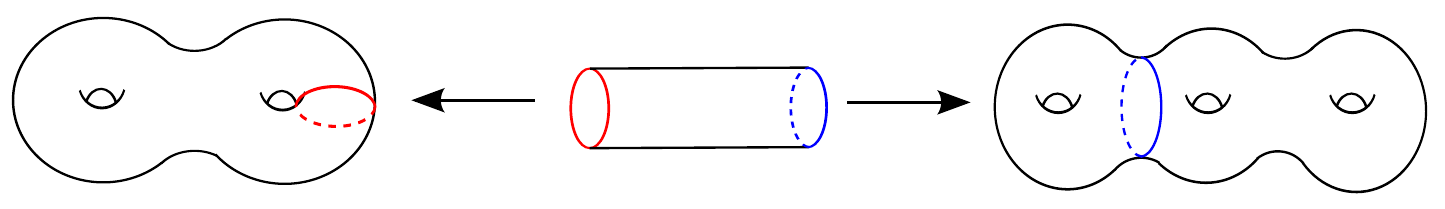}
	\put(22,9){\small{$a$}}
	\put(32,9){\small{$a^m$}}
	\put(61,9){\small{$b^n$}}
	\put(75.5,8.5){\small{$b$}}
	\end{overpic}
	\caption{{\small  A $2$--complex whose fundamental group is a surface amalgam in the family $\cC_{m,n}$. The left boundary curve of the annulus is glued to the curve $a^m$, and the right boundary curve of the annulus is glued to the curve $b^n$.}}
      \label{figure:surf_amalgam}
     \end{figure}
  
  On the other hand, we prove all groups in $\cC$ are virtually $3$--manifold groups. 
  
  \begin{thm} \label{sec1:virt3man}
   Each surface amalgam in the family $\cC$ has a finite-index subgroup that is the fundamental group of a $3$--manifold. 
  \end{thm}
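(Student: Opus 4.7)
The plan is to produce a finite-index subgroup $\tilde G \leq G$ and then exhibit a $3$-manifold $M$ with $\pi_1 M \cong \tilde G$. Write $A = \pi_1(S_g)$, $B = \pi_1(S_h)$, and $K = \langle a^m = b^n \rangle$, so that $G = A *_K B$. The aim is to choose $\tilde G$ so that the associated finite cover $\tilde X \to X$ has the following clean graph-of-spaces structure: finite covers of $S_g$ and $S_h$ as vertex spaces, and annuli as edge spaces, each attached at both ends by homeomorphisms onto simple closed curves in the vertex surfaces. Given such $\tilde X$, the $3$-manifold $M$ is obtained by thickening each vertex surface $\Sigma_v$ to $\Sigma_v \times [-1,1]$ and, for each edge with attaching curves $\alpha \subset \Sigma_v$ and $\alpha' \subset \Sigma_{v'}$, identifying a regular neighborhood of $\alpha$ in $\Sigma_v \times \{1\}$ with a regular neighborhood of $\alpha'$ in $\Sigma_{v'} \times \{-1\}$ via a homeomorphism of annuli. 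A standard van Kampen computation on this graph of $3$-manifolds then gives $\pi_1 M \cong \pi_1(\tilde X) = \tilde G$.

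To produce the cover, first choose finite quotients of the vertex groups in which the essential simple closed curves have the correct orders. Since surface groups are residually finite (indeed LERF, by Scott), there exists a homomorphism $\phi_g \colon A \to Q_g$ to a finite group with $\phi_g(a)$ of order exactly $m$; equivalently, $\ker \phi_g \cap \langle a \rangle = \langle a^m \rangle$. Similarly pick $\phi_h \colon B \to Q_h$ with $\phi_h(b)$ of order $n$. When $a$ is non-separating, the class $[a]$ is primitive in $H_1(S_g) \cong \Z^{2g}$, so $\phi_g$ can be taken to be the abelianization $A \to H_1(S_g) \to \Z/m$ sending $[a] \mapsto 1$; when $a$ is separating, one uses cyclic subgroup separability directly, or writes $a$ as a product of commutators in a large alternating group. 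The cover $\tilde S_g \to S_g$ corresponding to $\ker \phi_g$ has the property that every component of the preimage of $a$ is an embedded simple closed curve covering $a$ with degree exactly $m$; analogously for $\tilde S_h$ and $b$.

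Now combine $\phi_g$ and $\phi_h$ into a single homomorphism $\psi \colon G \to Q_g \times Q_h$ by setting $\psi|_A = (\phi_g, 1)$ and $\psi|_B = (1, \phi_h)$. Since $\phi_g(a^m) = 1 = \phi_h(b^n)$, the two restrictions agree on $K$, so $\psi$ is a well-defined homomorphism out of the amalgam. Set $\tilde G = \ker \psi$. Then $\tilde G$ has finite index in $G$, contains $K$, and satisfies $\tilde G \cap \langle a \rangle = \langle a^m \rangle$ and $\tilde G \cap \langle b \rangle = \langle b^n \rangle$. In the cover $\tilde X \to X$ corresponding to $\tilde G$, each lift of the annulus is itself an annulus (degree $1$ over its image), and its two boundary circles map homeomorphically onto simple closed curves in $\tilde S_g$ and $\tilde S_h$, namely the lifts of $a$ and $b$ described above. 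This realizes the graph-of-spaces structure required in the first paragraph and completes the proof. The technical heart of the argument is this compatibility construction: producing a single finite quotient of the whole amalgam that simultaneously controls the orders of $a$ in $A$ and of $b$ in $B$. The product-of-quotients $\psi$ sidesteps the usual obstruction by sending both $a^m$ and $b^n$ to the identity, which is automatically consistent with the amalgamation relation $a^m = b^n$.
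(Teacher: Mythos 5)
Your construction of the finite-index subgroup is fine: the map $\psi\colon G \to Q_g\times Q_h$ with $\psi|_A=(\phi_g,1)$, $\psi|_B=(1,\phi_h)$ is well defined because both restrictions kill the amalgamated element, and its kernel $\tilde G$ is normal, has finite index, contains $K=\langle a^m\rangle$, and meets $\langle a\rangle$ and $\langle b\rangle$ exactly in $\langle a^m\rangle$ and $\langle b^n\rangle$. Your description of the cover is also correct as far as it goes: every lift of the annulus is a homeomorphic copy whose boundary circles map homeomorphically onto lifts of $a_0$ and $b_0$, and every lift of $a_0$ covers $a_0$ with degree exactly $m$. The gap is in what you then conclude. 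Count incidences: there are $D=[G:\tilde G]$ lifted annuli, each contributing exactly one boundary circle over $a_0$, but only $D/m$ lifts of $a_0$; since the cover is regular, \emph{every} lift of $a_0$ has exactly $m$ lifted annuli attached along it (and every lift of $b_0$ has $n$). This is forced: locally along $a_0$ the complex $X$ has $m$ annulus ``fins,'' and a covering map is a local homeomorphism, so no finite cover can reduce the number of sheets meeting a lifted curve. Consequently, whenever $m\geq 2$ or $n\geq 2$ (i.e., in every case not already covered by Theorem~\ref{sec1_3man_classification}(1)), your thickening recipe breaks down: for two edges sharing the attaching curve $\alpha\subset\Sigma_v$ you cannot identify ``a regular neighborhood of $\alpha$ in $\Sigma_v\times\{1\}$'' with neighborhoods in two different thickened surfaces --- two disjoint annular neighborhoods cannot both contain $\alpha$, and using overlapping ones produces a non-manifold with three or more sheets meeting along an annulus. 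So the van Kampen step is applied to a space that has not actually been constructed.

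The missing ingredient is exactly what the paper supplies via the Kapovich--Kleiner thickening (proof of Theorem~\ref{thm:surf_amal_3mancover}): cut out the branching curves, thicken the resulting surfaces \emph{with boundary} to $I$--bundles, insert one solid torus for each branching curve, and glue the boundary annuli of the thickened surfaces to disjoint parallel annuli on that solid torus; this accommodates any number of pages along a curve. (An alternative repair of your construction: glue the $m$ edge-annuli at $\alpha$ to disjoint annular neighborhoods of parallel push-offs of $\alpha$ in $\Sigma_v\times\{1\}$, and then argue that replacing edge inclusions by conjugate ones does not change the fundamental group of the graph of groups.) Note that the paper's covers $X_2\to X_1\to X$, built from the bounding-pair double cover, Neumann's parity lemma, and the $K_{m,n}\times S^1$ cover of $A_{m,n}$, still have many annuli meeting each branching circle --- the point is not to remove the branching but to handle it correctly in the $3$--dimensional thickening; your algebraic construction of $\tilde G$ is a reasonable substitute for those covering-space lemmas, but it cannot substitute for this step. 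A minor secondary point: subgroup separability alone only yields a finite quotient in which $\phi_g(a)$ has order a \emph{multiple} of $m$, not exactly $m$; your alternating-group argument (writing a separating $a$ as a product of commutators mapping to an order-$m$ element) is the one that actually delivers the exact order you need.
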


  Kapovich--Kleiner \cite{kapovichkleiner} introduced the first examples of torsion-free hyperbolic groups that are not $3$--manifold groups but are virtual $3$--manifold groups. 
  In their later work on coarse Poincar\'{e} duality spaces \cite{kapovichkleiner05}, they observe that many ``higher genus Baumslag--Solitar groups'' are also not $3$--manifold groups. 
Higher genus Baumslag--Solitar groups are certain HNN-extensions of hyperbolic surface groups with two boundary components over powers of the boundary curves.
In this paper, we extend and generalize the Kapovich--Kleiner construction to a broad family of surface group amalgams.

In the present setting of surface amalgams over powers of embedded curves in closed surfaces, the combinatorial obstruction to acting on a $3$--manifold is substantially simpler than the obstruction found by Kapovich--Kleiner.  Ultimately, the obstruction we found boils down to a very elementary argument about subgroups of finite dihedral groups (see for example ``Case~1'' in the proof of Theorem~\ref{thm:3manclassification}).
Moreover, some interesting features arise when considering closed surfaces, which contain both separating and nonseparating curves. 
In contrast, the Kapovich--Kleiner examples are formed by gluing over powers of boundary curves, which are homologically trivial.

  There has been much interest in geometric group theory recently in the problem of embedding groups as quasi-convex subgroups of right-angled Artin groups (see for example \cite{wise}). 
Some of the strongest consequences follow from the fact that right-angled Artin groups embed in right-angled Coxeter groups, which are subgroups of $SL_n(\Z)$.  In particular, every virtually special cubulated group is 
linear over $\Z$ because of its embedding in a right-angled Coxeter group \cite{HsuWise99,DavisJanuszkiewicz00}.
In this paper, we obtain a strong embedding theorem; we prove each surface amalgam in $\cC$ virtually embeds as a finite-index subgroup in  a particularly simple right-angled Coxeter group, one whose nerve is a planar graph called a {\it generalized $\Theta$-graph} (see Definition~\ref{def:RACG}).

\begin{thm}
\label{thm:sec1_CW_AC}
Each surface amalgam in $\cC$ is abstractly commensurable to a right-angled Coxeter group with nerve a generalized $\Theta$-graph.
\end{thm}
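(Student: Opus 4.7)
The plan is to exhibit each surface amalgam $G \in \cC$ as commensurable with an explicit amalgamated product $W = W_g *_{\langle p, q\rangle} W_h$ of two polygon reflection groups, and then to verify that $W$ is itself a right-angled Coxeter group whose nerve is a generalized $\Theta$-graph.

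I would first replace each surface group by a commensurable right-angled Coxeter group. Every closed hyperbolic surface group of genus $\geq 2$ is commensurable with the reflection group of some right-angled hyperbolic polygon, with substantial flexibility in the number of sides and the hyperbolic metric. Using this flexibility, and passing to a finite cover of $S_g$ if needed, I would arrange that the chosen essential simple closed curve $a \subset S_g$ has a lift whose axis in $\HH^2$ is the common perpendicular of two non-adjacent walls of the $W_g$-tiling of $\HH^2$ by copies of the right-angled polygon $P_g$. Those two walls correspond to two non-adjacent vertices $p, q$ of the cycle nerve $C_g = \partial P_g$, and a suitable power of $a^m$ is then realized in $W_g$ as a power of the product $pq$ of the corresponding reflections. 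The same construction applied to $S_h$ produces $W_h$ with cycle nerve $C_h$ and a pair of non-adjacent vertices whose product realizes a power of $b^n$.

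Next, by tuning the hyperbolic metrics and finite covers so that the translation lengths of the two candidate $pq$ elements agree (and match the translation length of the common amalgamating element $a^m = b^n$), I can identify the two dihedral subgroups as a single infinite dihedral subgroup $\langle p, q\rangle$ sitting in both $W_g$ and $W_h$. The amalgam $W := W_g *_{\langle p, q\rangle} W_h$ is again a right-angled Coxeter group, and its nerve $L$ is obtained from $C_g \sqcup C_h$ by identifying the two vertices labeled $p$ and the two vertices labeled $q$. Since $\{p, q\}$ is non-adjacent in each cycle, each cycle is cut into two arcs at these vertices, so $L$ has exactly two vertices $p, q$ joined by four internally disjoint paths---a generalized $\Theta$-graph. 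To conclude, I would verify the commensurability $G \sim W$ by matching Bass--Serre decompositions: $G$ splits over $\Z = \langle a^m = b^n\rangle$ with surface vertex groups, while the index-two even-length subgroup $W^+ \leq W$ splits as $W_g^+ *_{\langle pq\rangle} W_h^+$ over $\Z = \langle pq\rangle$ with surface vertex groups $W_g^+, W_h^+$. A common finite-index subgroup of $G$ and $W$ is then built by refining these splittings until the vertex and edge groups coincide.

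The main obstacle is the calibration used to build $W_g$ and $W_h$: the commensurability $\pi_1(S_g) \sim W_g$ must send some power of the specific element $a^m$ to a power of $pq$, and the analogous statement must hold for $S_h$, with the two translation lengths matched so that a common dihedral edge subgroup exists in both. This requires geometric flexibility in the choice of hyperbolic metric and right-angled polygon on each surface, which is available by classical arguments. Once the edge groups on the two sides are calibrated, the Bass--Serre assembly into a common finite-index subgroup of $G$ and $W$ is routine.
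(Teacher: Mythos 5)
Your construction has a structural flaw that no amount of calibration can repair: the nerve it produces is always a generalized $\Theta$--graph with exactly four essential paths, and with linear degree at most $2$. Indeed, each cycle $C_g$, $C_h$ is the boundary of a hyperbolic right-angled polygon, hence has at least five vertices, so the two chosen non-adjacent vertices $p,q$ cut each cycle into two arcs of which at most one has length two; gluing the two cycles along $\{p,q\}$ therefore gives $\Theta(n_1,n_2,n_3,n_4)$ with at most two of the $n_i$ equal to $1$. But the commensurability class of $G\in\cC_{m,n}$ forces branch data that grows with $m$ and $n$: the paper's Theorem~\ref{3mancovers} produces a nerve with $2(m-1)(n-1)$ linear paths and $2(m+n)$ hyperbolic paths, and this is not an artifact of the proof. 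Abstractly commensurable groups are quasi-isometric, and by Corollary~\ref{cor:main3} together with Theorem~\ref{racgQI} (and Dani--Thomas in the hyperbolic case), a group $G$ with $m\geq 2$, $n\geq 3$ is quasi-isometric only to groups $W_\Theta$ with linear degree at least $3$, never to one with linear degree $\leq 2$: your $W$ is either word hyperbolic (linear degree $\leq 1$), while $G$ contains $\Z^2$, or has peripheral subgroups that are virtually $\Z^2$ (linear degree $2$), while $G$ is hyperbolic relative to a subgroup commensurable to $F\times\Z$ with $F$ nonabelian free. The same counting kills the hyperbolic case $m=1$, $n\geq 3$: there $G$ is commensurable to a complex with $2n+2>6$ branches, which by the Dani--Thomas/Malone classification is not quasi-isometric to any four-branch (or six-branch-equivalent) example. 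So the proposal can only possibly succeed when $n\leq 2$, and it silently ignores the parameters $m,n$ --- which is precisely where the content of the theorem lies. The missing phenomenon is that gluing along proper powers fattens the edge: in finite covers the annulus piece $A_{m,n}$ pulls back to $K_{m,n}\times S^1$ (Lemma~\ref{kmncover}), and the rank of $\pi_1(K_{m,n})$ is what forces the large linear degree; an amalgam of two polygon reflection groups over a single special $D_\infty$ cannot reproduce this.

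Secondarily, even where the numerology is not obstructed, the last step you call routine is in fact the whole proof. Arranging that ``some power of $a^m$ is a power of $pq$'' and matching translation lengths only calibrates cyclic subgroups; commensurability of an amalgam is not determined by commensurable vertex groups plus isomorphic edge groups, but by matching the gluing data --- the degrees with which edge spaces attach to vertex spaces on both sides --- in a common finite cover. This is exactly what the paper does explicitly: a bounding-pair double cover of each surface, the degree--$mn$ cover built from $K_{m,n}\times S^1$, a homotopy equivalence collapsing $K_{m,n}$ to a rose, and Neumann's parity lemma to build further surface covers, yielding a tower over the surface complex and a parallel tower over the reflection orbicomplex that terminate in homeomorphic $2$--complexes. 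Your outline asserts this matching rather than performing it, and for $m$ or $n$ at least $2$ it cannot be performed against the four-branch Coxeter group you construct.
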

    
  By an elementary construction of Davis--Okun \cite{davisokun}, each right-angled Coxeter group with nerve a planar, connected simplicial complex of dimension at most two acts properly on a contractible $3$--manifold; see Theorem~\ref{thm:racg_3man} for more details. 
  Thus, Theorem~\ref{sec1:virt3man} follows from Theorem~\ref{thm:sec1_CW_AC}.   
  The class $\cW$ of right-angled Coxeter with generalized $\Theta$-graph nerve was introduced by Dani--Thomas \cite{danithomas}, as a generalization of a family of groups investigated by Crisp--Paoluzzi \cite{crisppaoluzzi}.

  A natural problem in geometric group theory proposed by Gromov is to characterize the quasi-isometry classes within a class of finitely-generated groups. Recent work of Cashen--Martin \cite{cashenmartin} provides a far-reaching quasi-isometry classification for groups which split over $2$-ended subgroups using highly intricate combinatorial machinery. In an appendix, we provide the quasi-isometry classification of all groups in the families $\cC$ and $\cW$.
The classification obtained in the appendix is a special case of the general classification theorem of Cashen--Martin. In particular, within the family $\cC$ we obtain a simple statement of the quasi-isometry classification in terms of the degree of the gluing maps in the amalgams. 
  
  \begin{thm} \label{thm:sec1_C_QI}
   \it Let $G \in \mathcal{C}_{m,n}$ and $G' \in \mathcal{C}_{m',n'}$. Then $G$ and $G'$ are quasi-isometric if and only if one of the following conditions hold:
  \begin{enumerate}
  \item $m=m'=1$ and $n=n'$; or, 
  \item $m=m'=n=n'=2$; or, 
  \item $m\geq 2, n\geq 3, m'\geq 2$ and $n'\geq 3$. 
  \end{enumerate}
  \end{thm}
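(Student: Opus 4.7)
The plan is to apply the quasi-isometry classification of Cashen--Martin~\cite{cashenmartin} for finitely generated groups that split over two-ended subgroups. Each $G \in \cC_{m,n}$ is presented as an amalgamated product $\pi_1(S_g) *_C \pi_1(S_h)$ over the cyclic edge group $C = \la a^m\ra = \la b^n\ra$, so the theorem applies. Their invariant is the isomorphism class of a decorated Bass--Serre-type tree, in which vertex labels record the quasi-isometry type of the vertex stabilizer and edge labels capture how the edge group sits inside its neighbors up to commensurability. Since all vertex stabilizers here are closed hyperbolic surface groups (all quasi-isometric to $\HH^2$), the distinguishing data reduces to (i)~the multiplicities $(m,n)$ at the two surface vertices and (ii)~the quasi-isometry type of the maximal virtually cyclic centralizer $P = \la a, b \mid a^m = b^n\ra$ in $G$, which controls the peripheral structure of the splitting.

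A direct computation identifies $P$ in each case: when $m=1$, $P=\la b\ra$ is infinite cyclic and $G$ is word hyperbolic; when $(m,n)=(2,2)$, $P$ is the Klein bottle group, hence virtually $\Z^2$, and $G$ is toral relatively hyperbolic; and when $m\geq 2, n\geq 3$, the group $P$ has center $\la a^m\ra$ with quotient $\Z/m*\Z/n$ and so is virtually $F\times\Z$ for a nonabelian free group $F$, and $G$ is relatively hyperbolic with peripheral subgroup $P$. Two elementary quasi-isometry invariants then separate all three families across the boundaries of the theorem: hyperbolicity of $G$ separates Family~1 ($m=1$) from Families~2 and 3, and the growth type of $P$ (polynomial of degree $2$ versus exponential) separates Family~2 from Family~3.

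For the sufficiency of the three listed conditions, I would build quasi-isometries $G\to G'$ piecewise over the Bass--Serre trees, using quasi-isometries of $\HH^2$ between the closed surface pieces and, where applicable, quasi-isometries between the peripheral subgroups $P$ and $P'$. In cases~(1) and~(2) the decorated trees literally coincide since $(m,n)=(m',n')$. In case~(3), although $(m,n)$ and $(m',n')$ need not be equal, the peripheral groups are all quasi-isometric to one another (any two virtually $F\times\Z$ groups with $F$ nonabelian free are quasi-isometric), and the Cashen--Martin tree equivalence identifies the resulting invariants. For necessity, the only remaining pairs to separate lie within Family~1 with $n\neq n'$; here I would recover $n$ as an invariant of the decorated JSJ tree, namely as the index of the edge stabilizer inside the maximal virtually cyclic subgroup on the $\pi_1(S_h)$ side.

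The main obstacle is this last point: showing that $n$ is a quasi-isometry invariant inside Family~1. Since $G$ is word hyperbolic, the relevant Cashen--Martin invariant can be read directly off the Bowditch boundary of $G$. The local combinatorial structure of the boundary near the endpoint pair of a maximal virtually cyclic subgroup encodes the multiplicities at the surrounding JSJ edges, and in particular detects $n$. Carrying out this boundary analysis, and simultaneously verifying that the analogous multiplicity data collapses within Family~3 (so that every pair $(m,n)$ with $m\geq 2$ and $n\geq 3$ produces the same invariant) but not within Family~1, is the technical heart of the appendix.
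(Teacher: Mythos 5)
Your route through the Cashen--Martin machinery is a legitimate alternative in principle --- the paper itself remarks that this classification is a special case of \cite{cashenmartin} --- and your separating invariants for the ``only if'' direction across the three families are essentially the right ones: hyperbolicity isolates $m=1$, and the quasi-isometry type of the cylinder stabilizer $P=\langle a,b\mid a^m=b^n\rangle$ (virtually $\Z^2$ versus virtually $F\times\Z$) separates $(2,2)$ from $m\geq 2,\ n\geq 3$, provided you justify that this type is actually visible to quasi-isometries of $G$; the paper does this via Caprace's relative hyperbolicity together with \Drutu--Sapir in Theorem~\ref{racgQI}, and you need the same input (or a full verification of the Cashen--Martin decoration) --- the growth type of a subgroup $P$ is not by itself an invariant of $G$. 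The genuine gap is the ``if'' direction. Quasi-isometry of the peripheral groups $P$ and $P'$ does not produce a quasi-isometry $G\to G'$, and ``the Cashen--Martin tree equivalence identifies the resulting invariants'' is an assertion, not a computation: to invoke their theorem you must actually check that the decorated trees of cylinders coincide for all $(m,n)$ and $(m',n')$ with $m,m'\geq 2$, $n,n'\geq 3$, independently of the genera and of whether the curves separate, and that is exactly where the content lies. The paper instead constructs the quasi-isometry by hand: Whyte's vertex-bijective quasi-isometries between biregular trees (Theorem~\ref{k3}, Corollary~\ref{treeR}) to match up the patterns of essential lines, Behrstock--Neumann's uniform extension of bilipschitz boundary maps over fattened trees (Theorem~\ref{bs1}) for the surface pieces, and a gluing statement (Proposition~\ref{gluing}) to assemble these into a quasi-isometry of model spaces (Theorem~\ref{qis}, then Theorem~\ref{main1}). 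Your sketch addresses neither the uniformity of constants nor the matching of line patterns, and the same objection applies to your claim that in cases (1)--(2) the decorated trees ``literally coincide'': one must still show that the relative quasi-isometry type of a closed surface group equipped with the line pattern of (powers of) a simple closed curve does not depend on the genus or on the separating/non-separating character of the curve, which is again a Behrstock--Neumann-type matching argument, not a formality.

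The second gap is necessity within family (1). You defer the proof that $n$ is a quasi-isometry invariant when $m=m'=1$ to an unexecuted Bowditch-boundary analysis that you yourself call the technical heart, and the quantity you propose to extract --- the index of the edge group in a maximal virtually cyclic subgroup --- is algebraic and not manifestly quasi-isometry invariant; what the boundary (or the model space) actually records is the valence data at the cut pairs: coarsely, $2n+2$ half-planes share each branching line in the $\cC_{1,n}$ model. This step can be closed simply by citing Malone or Dani--Thomas, which is precisely what the paper does (Theorem~\ref{hypcase}, used in the proof of Theorem~\ref{main1}); as written, your treatment of it is a placeholder rather than a proof.
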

  
Although the conclusion of Theorem~\ref{thm:sec1_C_QI} may not be surprising to experts, we have included it as an illustration for a less-expert reader of some simple ways that the powerful tools of Behrstock--Neumann \cite{behrstockneumann} and Whyte \cite{whyte} may be used to produce quasi-isometries between trees of spaces whose vertex spaces have ``treelike'' geometries.  Furthermore, we relate the geometry of surface amalgams in the family $\cC$ to the right-angled Coxeter groups in the family $\cW$. Dani--Thomas provide a quasi-isometry classification of the hyperbolic groups of $\cW$, and we extend this classification to cover the non-hyperbolic groups in $\cW$ in Theorem~\ref{sec1_thm_cW_QI}. 
 
  A surface amalgam in $\cC_{m,n}$ is $\delta$--hyperbolic if and only if $m=1$. Thus, by Theorem~\ref{thm:sec1_C_QI} there are infinitely many quasi-isometry classes among the $\delta$--hyperbolic groups in $\cC$ and exactly two quasi-isometry classes among the non-hyperbolic groups in $\cC$. The quasi-isometry classification of $\delta$--hyperbolic groups in $\cC$ follows in spirit from the quasi-isometry classification of geometric amalgams of free groups given in the thesis of Malone \cite{malone}. A proof of the quasi-isometry classification in the hyperbolic case that $m=n=1$ is given by Stark \cite{stark},
  and the result in the hyperbolic setting also follows from Cashen--Martin \cite{cashenmartin}. The non-hyperbolic case above would also follow by translating \cite[Theorem~8.5]{cashenmartin} to this setting.

  \subsection{Methods of proof} \label{subsec:method}

 \subsubsection{$3$--manifold groups}
 
In Theorem~\ref{sec1_3man_classification} we determine which surface amalgams in the family $\cC$ are fundamental groups of $3$--manifolds. The proof uses results due to Kapovich--Kleiner \cite{kapovichkleiner05} on coarse Poincar\'{e} duality spaces. In dimension~$3$, Kapovich--Kleiner prove (with additional mild hypotheses, see Section~\ref{sec:3man},) that if $W$ is a union of $k$ half-planes glued along their boundaries, then a uniformly proper embedding of~$W$ into a coarse $PD(3)$ space $X$ coarsely separates the space $X$ into $k$ deep components, and there is a cyclic order on the set of components that is preserved by any homeomorphism of $X$ stabilizing the image of~$W$.
  
If the conditions of Theorem~\ref{sec1_3man_classification} do not hold, we analyze the action of a group $G \in \cC$ on a model geometry for the group to find a union of $k$ half-planes whose $G$--stabilizer cannot preserve any cyclic order, which implies that $G$ cannot act properly on any coarse $PD(3)$ space. On the other hand, if the conditions of Theorem~\ref{sec1_3man_classification} hold for $G \in \cC$, we explicitly construct a $3$--manifold with fundamental group $G$ by gluing together an $I$--bundle over each surface along an annulus on the boundary of each $I$--bundle.  

By Theorem~\ref{sec1:virt3man} each group in $\cC$ is virtually the fundamental group of a $3$--manifold. Thus, by work of Bestvina--Kapovich--Kleiner \cite{bestvinakapovichkleiner} if $G \in \cC$ acts properly and cocompactly on a $\CAT(0)$ space $X$, then the visual boundary of $X$ does not contain an embedded non-planar graph.

Finally, observe the following, which contrasts with the classification theorem obtained in Theorem~\ref{sec1_3man_classification}.
A $\delta$--hyperbolic group in $\cC_{1,n}$ is quasi-isometric to (and, in fact, abstractly commensurable to) the fundamental group of the union of $2n+2$ hyperbolic surfaces with one boundary component identified to each other along their boundary curves. The fundamental group of this complex is also the fundamental group of a hyperbolic $3$--manifold with boundary called a book of $I$--bundles \cite{cullershalen}.  (See also \cite{haissinskypaoluzziwalsh} for more about these examples.)  In contrast, by Theorem~\ref{sec1_3man_classification}, most $\delta$--hyperbolic groups in $\cC$ are not fundamental groups of $3$--manifolds.

\subsubsection{Quasi-isometry classification}

  To study the quasi-isometry classes within $\cC$, we prove that each group in $\cC$ has a model geometry built from gluing together copies of a fattened tree in the sense of Behrstock--Neumann (see Definition~\ref{fattenedtree}) and copies of $T_{m,n} \times \R$, where $T_{m,n}$ denotes the biregular tree with vertices of valance $m$ and $n$ (see Definition~\ref{treedef}). We provide a quasi-isometry classification of such model geometries, and we use results of Whyte \cite{whyte} and Behrstock--Neumann \cite{behrstockneumann} on the geometry of the components of these spaces to construct a quasi-isometry. 
  
  To distinguish the quasi-isometry classes within these model geometries, we show that each right-angled Coxeter group with generalized $\Theta$--graph nerve, as defined below, also has a model geometry of the form described above. 
  See Figure~\ref{NaO} for an illustration of a generalized $\Theta$--graph. 
  
\begin{defn}
\label{def:RACG}
Let $\Gamma$ be a finite simplicial graph with vertex set $S$ and edge set $E$. The \emph{right-angled Coxeter group associated to $\Gamma$} is the group $W_{\Gamma}$ with presentation
\[
  W_{\Gamma} = \bigpresentation{s \in S}{\text{$s^2 = 1$ for all $s \in S$, and $[s,t]=1$ for all $\{s,t\} \in E$}}.
\]
If $\Gamma$ has no triangles (i.e. $\Gamma$ is a flag simplicial complex) then $\Gamma$ is equal to the \emph{nerve} of $W_\Gamma$ in the sense of Davis \cite{davis}.

A {\it generalized $\Theta$--graph} is a graph that contains two vertices of valance $k$ and with $k$ edges connecting these two vertices. In addition, the $i^{th}$ edge of the graph is subdivided into $n_i+1$ edges by inserting $n_i$ vertices along this edge. The resulting graph is denoted $\Theta(n_1, \ldots, n_k)$, and we always assume $1 \leq n_i \leq n_j$ for $i<j$. The {\it linear degree} $\ell \in \N$ of a generalized $\Theta$--graph $\Theta(n_1, \ldots, n_k)$ is the cardinality of the $n_i$ that equal one, and the {\it hyperbolic degree} is equal to $k-\ell$. Let $\mathcal{W}$ denote the set of all right-angled Coxeter groups with generalized $\Theta$--graph nerve.
\end{defn}

  A group in $\cW$ is $\delta$--hyperbolic if and only if its linear degree is at most one. The quasi-isometry classification of the $\delta$--hyperbolic groups in $\cW$ follows from Dani--Thomas \cite{danithomas}. In particular, there are infinitely many quasi-isometry classes among $\delta$--hyperbolic groups in~$\cW$. We give the quasi-isometry classification of the remaining non-hyperbolic groups in $\cW$; it follows from the next theorem that there are three quasi-isometry classes among the non-hyperbolic groups in $\cW$. 
  
  \begin{thm} \label{sec1_thm_cW_QI}
   Let $\Theta$ and $\Theta'$ be generalized $\Theta$--graphs with linear degree $\ell \geq 2$ and $\ell' \geq 2$, respectively, and hyperbolic degree $h \geq 0$ and $h'\geq 0$, respectively.  Then the non-hyperbolic right-angled Coxeter groups $W_{\Theta}$ and $W_{\Theta'}$ are quasi-isometric if and only if one of the following three conditions holds:
  \begin{enumerate}
   \item $\ell = \ell' = 2$ and $h,h'\geq 1$; or,
   \item $\ell,\ell' \geq 3$ and $h,h'\geq 1$; or,
   \item $\ell,\ell' \geq 3$, and $h=h'=0$.
  \end{enumerate}
  \end{thm}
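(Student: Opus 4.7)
My plan is to reduce the classification to a model geometry for $W_\Theta$ built from the two kinds of pieces already in play in the treatment of $\cC$: fattened trees and products $T_{m,n}\times \R$. The invariants $\ell$ and $h$ will encode, respectively, the quasi-isometry type of a ``linear core'' and the presence of hyperbolic pieces.

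\textbf{Step 1: Model geometry.} For $\Theta = \Theta(n_1,\ldots,n_k)$ with linear degree $\ell \ge 2$, I construct a model space $X_\Theta$ for $W_\Theta$ as a tree of spaces. The crucial observation is that the $\ell$ mid-vertices of the length-two paths, together with the two hubs $a$ and $b$, span a complete bipartite subgraph $K_{2,\ell}$; the associated special subgroup is $W_{K_{2,\ell}} \cong D_\infty \times (\Z/2)^{*\ell}$, which is virtually $\Z\times F_{\ell-1}$ and whose Davis complex is quasi-isometric to $T_\ell \times \R$, degenerating to $\R^2$ when $\ell = 2$. This is the linear core. Each of the $h$ paths with $n_i \ge 2$ contributes a special subgroup $W_{P_{n_i+2}}$, a RACG on a path, which is hyperbolic and virtually a surface-with-boundary group; its Davis complex is a fattened tree in the sense of Behrstock--Neumann. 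These hyperbolic pieces glue to the linear core along $\R$--axes stabilized by $\langle a\rangle$ or $\langle b\rangle$, producing $X_\Theta$.

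\textbf{Step 2: ``If'' direction.} When $W_\Theta$ and $W_{\Theta'}$ lie in a common class from (1)--(3), their model spaces have the same underlying combinatorial type. Whyte's bilipschitz classification of bushy trees identifies any two fattened-tree vertex spaces and identifies the linear cores $T_\ell\times\R$ and $T_{\ell'}\times\R$ whenever $\ell,\ell' \ge 3$. Since the edge spaces are copies of $\R$ respected by these maps, a Behrstock--Neumann-style assembly lifts the piecewise quasi-isometries to a global quasi-isometry between $X_\Theta$ and $X_{\Theta'}$.

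\textbf{Step 3: ``Only if'' direction.} Two quasi-isometry invariants separate the three classes. The presence of a hyperbolic vertex group in the JSJ decomposition over two-ended subgroups is a QI invariant (Papasoglu--Whyte/Cashen--Martin), distinguishing class (3), where $h=0$ and no hyperbolic pieces appear, from classes (1) and (2), where $h \ge 1$. The QI type of the linear core distinguishes class (1) from class (2): it is $\R^2$ when $\ell = 2$, whereas when $\ell \ge 3$ it is $T_\ell \times \R$, which contains a quasi-isometrically embedded $F_2\times \Z$ and so is not QI to $\R^2$. The main obstacle will be Step~1: the hubs $a,b$ lie on \emph{every} path in $\Theta$, so the JSJ of $W_\Theta$ is not obtained by cutting along individual paths of $\Theta$. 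Instead, one must bundle all length-two paths into a single linear vertex space via the $K_{2,\ell}$ subgraph and verify that the fattened-tree pieces glue along the expected $\R$--axes. Once the model is in hand, the conclusion follows from the QI classification of trees of spaces with vertex spaces of these two types carried out in the appendix.
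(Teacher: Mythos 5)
Your overall architecture matches the paper's: the model geometry you describe in Step~1 is exactly the one the paper builds in Construction~\ref{racgmodel} (a tree of spaces of type $(\ell,\ell,\XX,h)$ with fattened-tree pieces coming from the hyperbolic paths and $T_\ell\times\R$ pieces coming from the linear part $\Theta_L=K_{2,\ell}$), and the assembly of piecewise maps via Behrstock--Neumann and Whyte is the paper's method as well. However, your ``if'' direction has a genuine gap. Within classes (1) and (2) the hyperbolic degrees $h$ and $h'$ may differ, and then the two model spaces do \emph{not} ``have the same underlying combinatorial type'': along each essential line of a $T_\ell\times\R$ piece one glues $h$ fattened trees in one space and $h'$ in the other, so there is no isomorphism of underlying trees over which to assemble vertex-space quasi-isometries (this is exactly the hypothesis needed in Proposition~\ref{gluing}), and Whyte's theorem on bushy trees says nothing here -- in particular it does not apply at all in class (1), where the linear core is $\R^2$. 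The paper flags this as the genuinely new difficulty for the Coxeter case (for surface amalgams the parameter is always $2$) and resolves it in Theorem~\ref{qis} by an explicit collapsing construction: one collapses paths of length $s-1$ in $T_{3,3}$ (resp.\ strips in $T_{2,2}\times\R$) so that $s$ essential lines, each carrying one fattened tree, are identified with a single essential line carrying $s$ fattened trees, and only then applies Theorem~\ref{bs1}, Corollary~\ref{treeR}, and Proposition~\ref{gluing}. Without some argument of this kind your Step~2 does not go through. (A smaller slip in Step~1: the fattened-tree pieces are glued to the linear core along lines stabilized by conjugates of the infinite dihedral group $\langle a,b\rangle$ -- the intersection $W_{P_{n_i+2}}\cap W_{\Theta_L}=W_{\{a,b\}}$ -- not by the order-two subgroups $\langle a\rangle$ or $\langle b\rangle$.)

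Your ``only if'' direction takes a different and heavier route than the paper. You invoke quasi-isometry invariance of the JSJ decomposition over two-ended subgroups (Papasoglu, Cashen--Martin) to detect the hyperbolic pieces and the quasi-isometry type of the linear core; this can be made to work, but it requires identifying the canonical JSJ of $W_\Theta$ and the types of its vertices (the path subgroups are virtually free, hence hanging-orbifold rather than rigid pieces), which is precisely the machinery the appendix is written to avoid. The paper instead uses Caprace's result that $W_\Theta$ is hyperbolic relative to $W_{\Theta_L}$ together with \Drutu--Sapir: when $h=0$ the asymptotic cones of $W_{\Theta_L}$ ($\R^2$ or $T\times\R$) have no global cut points, so the group admits no proper relatively hyperbolic structure and cannot be quasi-isometric to a group with $h\geq 1$; and when $h,h'\geq 1$, a quasi-isometry induces a quasi-isometric embedding of peripherals, which would force a topological embedding $T\times\R\hookrightarrow\R^2$ of asymptotic cones, a contradiction -- this replaces your growth argument distinguishing $T_\ell\times\R$ from $\R^2$. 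Either invariant works once the peripheral (or JSJ) structure is known to be quasi-isometrically rigid, but your one-sentence appeal to that rigidity is where the real content lies, and it should be made explicit.
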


  By Caprace \cite{caprace,caprace-erra} each group in $\cW$ is hyperbolic relative to the right-angled Coxeter subgroup with nerve the union of the vertices of valance $k$ and the set of paths of length two connecting these two vertices. We apply work of \Drutu--Sapir \cite{drutusapir} on quasi-isometries between relatively hyperbolic groups to distinguish the quasi-isometry classes within $\cW$. 
  
  A consequence of our quasi-isometry classification of the model geometries of groups in $\cC$ and groups in $\cW$ is that each surface amalgam in $\cC$ is quasi-isometric to a right-angled Coxeter group in $\cW$, but the converse does not hold; see Corollary~\ref{cor:main3}.

  \subsubsection{Abstract commensurability classes}
  
  Theorem~\ref{thm:sec1_CW_AC} states that each group in $\cC$ is abstractly commensurable to a group in $\cW$. Each group in $\cC$ is the fundamental group of a complex consisting of two surfaces and an annulus identified using prescribed gluing maps, and each group in $\cW$ is the orbifold fundamental group of an orbicomplex built from right-angled reflection orbifolds. We prove that for each surface complex there is an associated orbicomplex so that these spaces have homotopy equivalent finite-sheeted covering spaces. The hyperbolic case $m=n=1$ was established previously by Stark \cite[Section~5.2]{stark}.

  The abstract commensurability classification is known for $\delta$--hyperbolic groups in $\cC$ and $\cW$ and is open for non-hyperbolic groups in $\cC$ and $\cW$. A consequence of Theorem~\ref{thm:sec1_CW_AC} is that solving the abstract commensurability classification within $\cC$ reduces to the question of solving the abstract commensurability classification problem within $\cW$. The abstract commensurability classification for $\delta$--hyperbolic groups in $\cW$ is given by Dani--Stark--Thomas \cite{danistarkthomas}. In the $\delta$--hyperbolic setting, if two groups in $\cW$ have the same linear degree, then the groups are abstractly commensurable if and only if their Euler characteristic vectors are commensurable vectors (see Definition~\ref{eulercharvector}). In the non-hyperbolic setting, we prove in Lemmas \ref{hypdegree} and~\ref{cover} that the commensurability classification reduces to the case in which two groups in $\cW$ have the same linear degree, and we prove if such groups have commensurable Euler characteristic vectors, then the groups are abstractly commensurable. It is an open problem to determine whether the converse holds as well. 

\subsection{Outline of the paper}
The preliminary metric notions are given in Section~\ref{sec:Preliminaries}.
Section~\ref{sec:ModelSpaces} contains the construction of the model spaces and model geometries for surface amalgams and right-angled Coxeter groups considered in this paper. 
Section~\ref{sec:ACclasses} contains results on the abstract commensurability classes within $\cC$ and $\cW$. In Section~\ref{sec:AC_thm_proof} we prove each group in $\cC$ is virtually a $3$--manifold group and is abstractly commensurable to a group in $\cW$. Section~\ref{sec:3man} contains the classification of which groups in $\cC$ are the fundamental group of a $3$--manifold and the proof that each group in $\cW$ acts properly on a contractible $3$--manifold. 
Appendix~\ref{sec:QIClassification} contains the quasi-isometry classification; the construction of quasi-isometries is given in Section~\ref{sec:QIConstruction}; the quasi-isometry classification within $\cW$ is given in Section~\ref{sec:QIRACG}; and, the quasi-isometry classification within $\cC$ is given in Section~\ref{sec:QISurfaceAmalgam}. 

\subsection*{Acknowledgments}
The authors are thankful for insightful discussions with Kevin Schreve and Genevieve Walsh and for comments from Chris Cashen on a draft of the paper. The authors also benefited from helpful conversations with Ric Ancel, Pallavi Dani, Craig Guilbault, Jason Manning, Boris Okun, Kim Ruane, and Kevin Whyte about ideas related to this paper.  This work was partially supported by a grant from the Simons Foundation ($\#318815$ to G. Christopher Hruska). The second author was partially supported by the Azrieli Foundation and ISF grant 1941/14. 

\section{Preliminaries}
\label{sec:Preliminaries}

\begin{defn}
Let $(X,d_X)$ and $(Y,d_Y)$ be metric spaces. A map $\Phi$ from $X$ to $Y$ is an \emph{$(L,C)$--quasi-isometry} if there are constants $L\geq 1$ and $C \geq 0$ such that the following hold:
\begin{enumerate}
\item The map $\Phi$ is an {\it $(L,C)$--quasi-isometric embedding}: for all $x_1, x_2 \in X$,
\[
   \frac{1}{L}\,d_X(x_1,x_2)-C\leq d_Y\bigl(\Phi(x_1),\Phi(x_2)\bigr)\leq L\,d_X(x_1,x_2)+C.
\]
\item The map $\Phi$ is {\it $C$--quasi-surjective}: every point of $Y$ lies in the $C$--neighborhood of $f(X)$. 
\end{enumerate}
\end{defn}

\begin{defn}
 Let $(X,d_X)$ and $(Y,d_Y)$ be metric spaces. A map $\Phi$ from $X$ to $Y$ is {\it $K$--bilipschitz} if there exists $K \geq 1$ so that for all $x_1, x_2 \in X$,
\[
   \frac{1}{K}\,d_X(x_1,x_2) \leq d_Y\bigl(\Phi(x_1), \Phi(x_2)\bigr) \leq K\,d_Y(x_1, x_2).
\] 
\end{defn}

\begin{defn}
Metric spaces $X$ and $Y$ are \emph{quasi-isometric} if there is a quasi-isometry from $X$ to $Y$. Two finitely generated groups are {\it quasi-isometric} if their Cayley graphs constructed with respect to finite generating sets are quasi-isometric.
\end{defn}

\begin{defn}
 A {\it model geometry} for a group $G$ is a proper metric space on which $G$ acts properly discontinuously and cocompactly by isometries. 
\end{defn}

A group $G$ is quasi-isometric to any model geometry for $G$. The groups considered in this paper are fundamental groups of finite graphs of groups (see \cite{serre,ScottWall79}). These groups have model geometries that are graphs of spaces in the following sense. 

\begin{defn}
\label{union}
Let $Y_1$ and $Y_2$ be topological spaces with subspaces $A_1 \subset Y_1$ and $A_2 \subset Y_2$.  Let $f\colon A_1 \to A_2$ be a homeomorphism.  The space obtained by identifying $Y_1$ and $Y_2$ along $A_1$ and $A_2$ is the space $X = Y_1 \cup_f Y_2$ defined by $Y_1 \sqcup Y_2 /\bigl(y \sim f(y)\bigr)$ for all $y \in A_1$. If $A$ is the image of $A_1$ and $A_2$ under the quotient map, we also use the notation $X = Y_1 \cup_A Y_2$. 
\end{defn}

\begin{defn}
 Let $G = (V,E)$ be a graph. 
 A {\it geometric graph of spaces} $\Gamma$ consists of a set of vertex spaces $\{X_v \, | \, v \in V\}$
 and a set of edge spaces $\{ X_e \, | \, e \in E\}$ so that the vertex and edge spaces are geodesic metric spaces, and there are isometric embeddings $X_e \rightarrow X_v$ and $X_e \rightarrow X_w$ as convex subsets for each edge $e = \{v,w\} \in E$.  The {\it geometric realization of $\Gamma$} is the metric space $X$ consisting of the disjoint union of the vertex and edge spaces, identified according to the adjacencies of $G$, and given the induced path metric. Observe that all edge and vertex spaces include as convex subspaces of $X$. The {\it underlying graph} of the graph of spaces $\Gamma$ is the abstract graph $G$ specifying $\Gamma$. When the underlying graph of $\Gamma$ is a tree, $\Gamma$ is a \emph{tree of spaces}. 
\end{defn}

The graphs of spaces defined in this paper differ slightly from a common definition of a graph of spaces. Often, one takes the product of each edge space $E$ with an interval $[0,1]$ and glues the spaces $E \times \{0\}$ and $E \times \{1\}$ to the incident vertex spaces; in our setting, the edge spaces are directly glued to vertex spaces. 

\section{Model spaces}
\label{sec:ModelSpaces}

\subsection{Construction of model spaces}

The model spaces considered in this paper are geometric trees of spaces that contain the following as vertex spaces. 

\begin{defn}[Fattened tree, \cite{behrstockneumann}] Let $T$ be a tree whose vertices have valence in the interval $[3,K]$ for some $K$. Fix a positive constant $L$ and assume that $T$ has been given a simplicial metric in which each edge has length between $1$ and $L$. The {\it fattened tree $X$} has each edge $E$ replaced by a strip isometric to $E \times [-\epsilon, \epsilon]$ for some $\epsilon >0$, and each vertex of valence $k$ replaced by a regular $k$--gon with side lengths $2\epsilon$ and so that around the boundary of the polygon the strips replacing the incoming edges of the vertex are attached in some given order. Let $X_0$ be similarly constructed, but starting with the regular $3$--valence tree with all edges having length one and with $\epsilon = \frac{1}{2}$; call $X_0$ the {\it standard fattened tree}.  
\label{fattenedtree}
\end{defn}

\begin{defn} \label{treedef}
For each positive integer $n$, the \emph{regular tree} $T_n$ is the tree in which every vertex has valence $n$. We define $T_0$ to be a single point. For positive integers $m$ and $n$ the \emph{biregular tree} $T_{m,n}$ is the tree in which every vertex has valence $m$ or $n$ so that if two vertices of $T_{m,n}$ are adjacent, one of them has valence $m$ and the other has valence $n$. Metrize each tree so that each edge has length one. 
\end{defn}

\begin{defn} Let $T$ be a tree and $u$ a vertex of $T$. In the metric space $T\times \RR$ each line $\{u\}\times \RR$ is an \emph{essential line}. 
\end{defn}

In Construction~\ref{racgmodel} and Construction~\ref{surfmodel}, we show that each group considered in this paper has a model geometry of the following form.

\begin{cons}[Model spaces] \label{modelspaces}
 Let $m,n,s \geq 1$ be integers, and let $\mathbb{X} = \{X_1, \ldots, X_t\}$ be a finite set of fattened trees. We say $Y$ is {\it a model space of type $(m,n, \mathbb{X},s)$} if $Y$ is the geometric realization of a geometric tree of spaces $\Gamma$ where
  \begin{enumerate}
   \item The underlying tree of $\Gamma$ is bipartite with vertex spaces of two types: either a copy of a fattened tree $X_i \in \XX$ or a copy of $T_{m,n}\times \RR$. Each edge space is a bi-infinite line. 
   
   \item Each essential line in each copy of $T_{m,n} \times \R$ is identified by an isometry to one boundary component of $s$ fattened trees, each isometric to some $X_i\in \XX$.
   
   \item Each boundary component of each fattened tree is identified to exactly one copy of $T_{m,n} \times \R$. 
  \end{enumerate}
 If $\XX = \{X_0\}$, where $X_0$ is the standard fattened tree, we call $Y$ the {\it standard model space} of type $(m,n,s)$. Note that if $Y$ is of type $(m,n,0)$, then $Y$ is isometric to $T_{m,n} \times \R$. 
\end{cons}

\subsection{Right-angled Coxeter groups}

\begin{defn}[Generalized $\Theta$--graph] 
Let $k\geq 3$ and $1 \leq n_1 \leq  n_2 \leq \cdots \leq n_k$ be positive integers. Let $\Psi_k$ be the graph with
two vertices $a$ and $b$ each of valence $k$ and with $k$ edges $e_1, e_2, \cdots e_k$ connecting $a$ and $b$. The \emph{generalized $\Theta$--graph} $\Theta=\Theta(n_1, n_2, \cdots n_k)$ is obtained by subdividing the edge $e_i$ of $\Psi_k$ into $n_i+1$ edges by inserting $n_i$ new vertices along $e_i$ for $1 \leq i \leq k$. An example appears in Figure~\ref{NaO}.

The vertices $a$ and $b$ are called the {\it essential vertices} of $\Theta$, and each path obtained by subdividing the edge $e_i$ is called an {\it essential path of degree $n_i$}. The {\it linear part} of $\Theta$, denoted $\Theta_L$, is the subgraph of $\Theta$ that consists of the union of all essential paths of degree $1$. The {\it hyperbolic part} of $\Theta$, denoted $\Theta_H$, is the subgraph of $\Theta$ consisting of all essential paths of degree at least two. The number of essential paths in $\Theta_L$ is called the {\it linear degree} of $\Theta$, and the number of essential paths in $\Theta_H$ is called the {\it hyperbolic degree} of $\Theta$. 
\end{defn}

\begin{figure}
\centering
\includegraphics[scale=0.60]{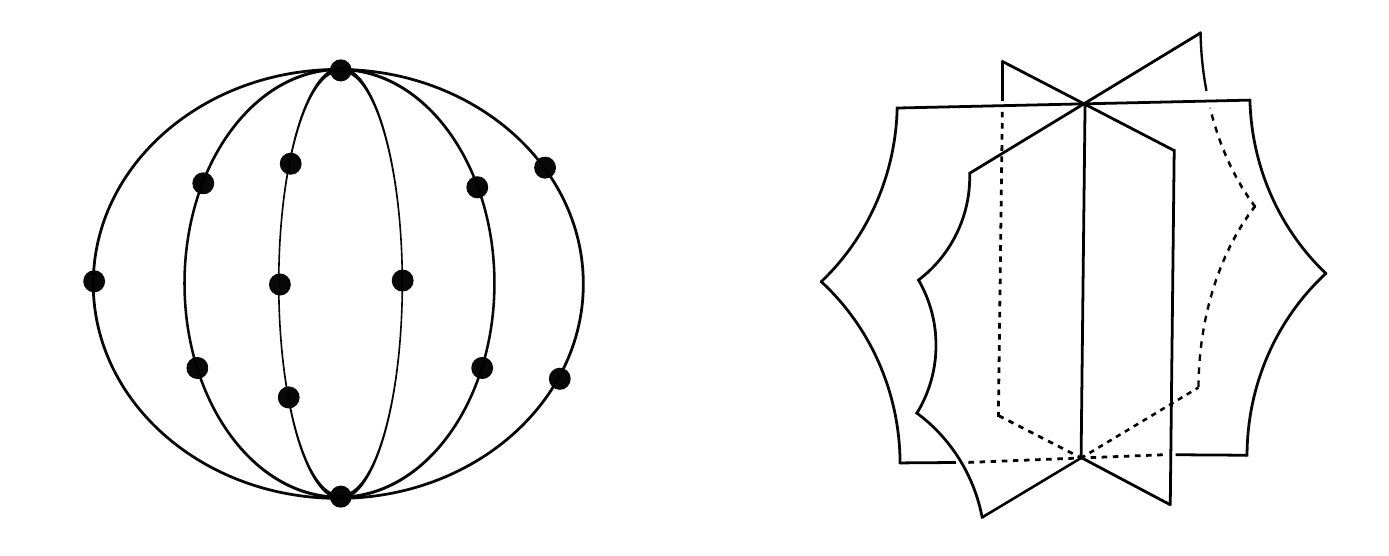}
\caption{{\small On the left is the generalized $\Theta$--graph $\Theta=\Theta(1, 1, 2, 2, 2, 3)$. On the right is an orbi-complex $\mathcal{O}_{\Theta}$ with orbifold fundamental group the right-angled Coxeter group with defining graph $\Theta$. All edges of the orbi-complex are reflection edges except for the branching edge. }}
\label{NaO}
\end{figure}

\begin{rem}
A generalized $\Theta$--graph $\Theta$ is the union of its linear part $\Theta_L$ and its hyperbolic part $\Theta_H$. The intersection of these parts is the set of two essential vertices in $\Theta$.
\end{rem}

\subsubsection{Model geometry}

\begin{defn}[Davis complex]
Given a nontrivial, connected, finite, simplicial, triangle-free graph $\Gamma$ with a set $S$ of vertices, the \emph{Davis complex} $\Sigma_{\Gamma}$ is the Cayley $2$--complex for the presentation of the right-angled Coxeter group $W_{\Gamma}$ given above, in which each disk bounded by a loop with label $s^2$ for $s$ in $S$ has been collapsed to an unoriented edge with label $s$. Then the $1$--skeleton of $\Sigma_{\Gamma}$ is the Cayley graph of $W_{\Gamma}$ with respect to the generating set $S$. Since all relators in this presentation other than $s^2 = 1$ are of the form $stst = 1$, the space $\Sigma_{\Gamma}$ is a square complex. Moreover, the Davis complex $\Sigma_{\Gamma}$ is a $\CAT(0)$ space, and the group $W_{\Gamma}$ acts properly and cocompactly on $\Sigma_{\Gamma}$ (see \cite{davis}).
\end{defn}

\begin{cons}[Model geometry for right-angled Coxeter groups with generalized $\Theta$--graph nerve]
\label{racgmodel}
Let $\Theta = \Theta(n_1, \ldots, n_k)$ be a generalized $\Theta$--graph, and let $W_\Theta$ be the right-angled Coxeter group with defining graph $\Theta$. The quotient of the Davis complex $\Sigma_\Theta$ is a reflection orbi-complex $\mathcal{O}_\Theta$ with orbifold fundamental group $W_\Theta$. The space $\mathcal{O}_\Theta$ is the union of right-angled reflection orbifolds each with one non-reflection edge so that the orbifolds are identified to each other along their non-reflection edges to form one branching edge. An illustration of $\mathcal{O}_\Theta$ appears in Figure~\ref{NaO}. We metrize $\mathcal{O}_\Theta$ so that the universal cover of the orbi-complex, which is topologically the Davis complex, is isometric to a model space given in Construction~\ref{modelspaces}.  

We first metrize the orbifolds in $\mathcal{O}_\Theta$ coming from the hyperbolic part of $\Theta$ so that the universal cover of each orbifold is a fattened tree. Suppose $n_1 = \cdots = n_{\ell} = 1$, so the linear degree of $\Theta$ is $\ell$ with $0 \leq \ell \leq k$, and the hyperbolic degree of $\Theta$ is $k-\ell$. For each $i > \ell$, let $O_i \subset \mathcal{O}_\Theta$ be a orbifold with one non-reflection edge and $n_i +2 \geq 4$ reflection edges labeled in order $\sigma_1, \ldots, \sigma_{n_i+2}$ so that $\sigma_i$ and $\sigma_{i+1}$ are adjacent for $1 \leq i \leq 2n_i+1$. Subdivide $\sigma_2$ and $\sigma_{n_i+1}$ into two edges by inserting a vertex in the middle of each edge. Subdivide $\sigma_3, \ldots, \sigma_n$ into three edges by inserting two equally spaced vertices on each edge. Subdivide the boundary component of $O_i$ into $n_i$ edges by inserting $n_i -1$ equally spaced vertices along the boundary edge. Connect the vertices along the boundary edge to the new vertices along the reflection edges so that edge vertex on the boundary line has two out-going edges, the edges do not intersect, and, together with the boundary of $O_i$, they subdivide $O_i$ into a cell complex where each cell has four sides, as illustrated in Figure~\ref{racgcell}. Metrize these cells as Euclidean rectangles so that the new interior edges each have length one, each pair of edges emanating from the same vertex on the boundary edge span a square of side lengths one, and each edge along the boundary has length $L_i \geq 1$, so that $n_i L_i = n_j L_j = L$ for $\ell < i,j \leq k$. Since $O_i$ deformation retracts onto its reflection edges, whose universal cover is a tree, the universal cover of $O_i$ with this metric is a fattened tree: the vertex spaces are squares and the edge spaces have length either $L_i$ or $2L_i$. 
 
 \begin{figure}
\centering
\includegraphics[scale=0.15]{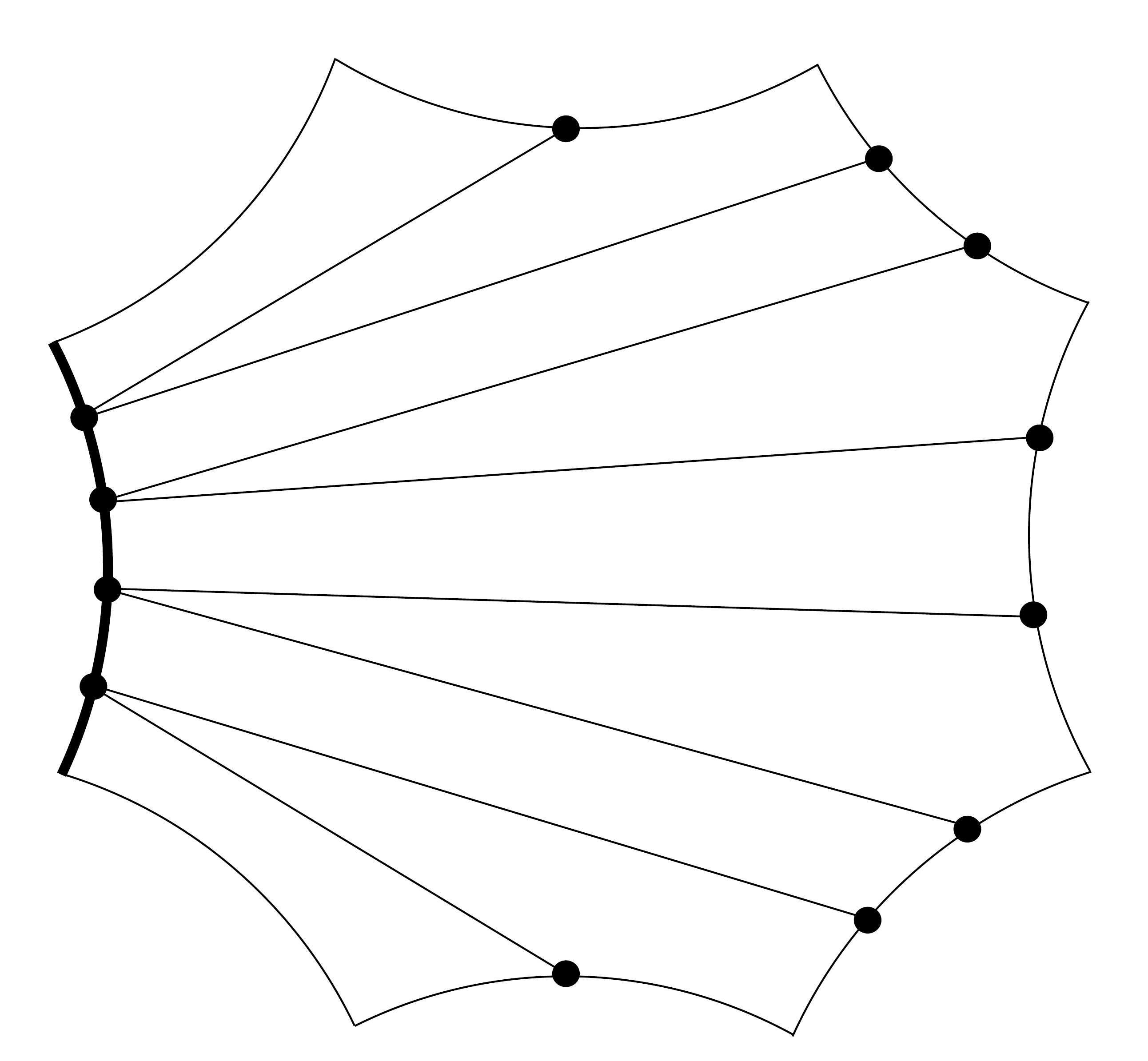}
\caption{{\small A right-angled reflection orbifold with one darkened non-reflection edge and seven reflection edges. The orbifold has been given a metric as a rectangular complex, as described in Construction~\ref{racgmodel}, so that the universal cover of the orbifold is a fattened tree. The orbifold deformation retracts onto a neighborhood of the union of the reflection edges, whose universal cover is a tree.   }}
\label{racgcell}
\end{figure}
 
 Metrize each linear orbifold in $\mathcal{O}_\Theta$ as a rectangle with non-reflection side of length $L$ and adjacent pair of edges of length one. Then, the orbifolds coming from the essential paths in $\Theta$ can be glued by an isometry along their non-reflection edges to form $\mathcal{O}_\Theta$, whose universal cover is isometric to a model space of type $(\ell, \ell, \XX,k-\ell)$ given in Construction~\ref{modelspaces}.
\end{cons}

\subsection{Surface group amalgams} \label{subsec:QI_surf_amals}

\subsubsection{Model geometry}

 \begin{notation}
 \label{hatcher}
  Let $A_{m,n}$ be homeomorphic to the quotient space of the cylinder $S^1 \times [0,1]$ under the identifications $(z,0) \sim (e^{2\pi i/m}z, 0)$ and $(z,1) \sim (e^{2\pi i/n}z, 1)$. 
  Let $A_m$ and $A_n$ be the two halves of $A_{m,n}$ formed by the quotients of $S^1 \times \bigl[0, \frac{1}{2}\bigr]$ and $S^1 \times \bigl[\frac{1}{2}, 1\bigr]$. Then, $A_m$ and $A_n$ are mapping cylinders of $z \mapsto z^m$ and $z \mapsto z^n$, respectively. 
  The fundamental group of $A_{m,n}$ is $\presentation{a,b}{a^m = b^n}$.
 \end{notation}

    \begin{figure}%[h]
      \begin{overpic}[scale=.5, tics=5]{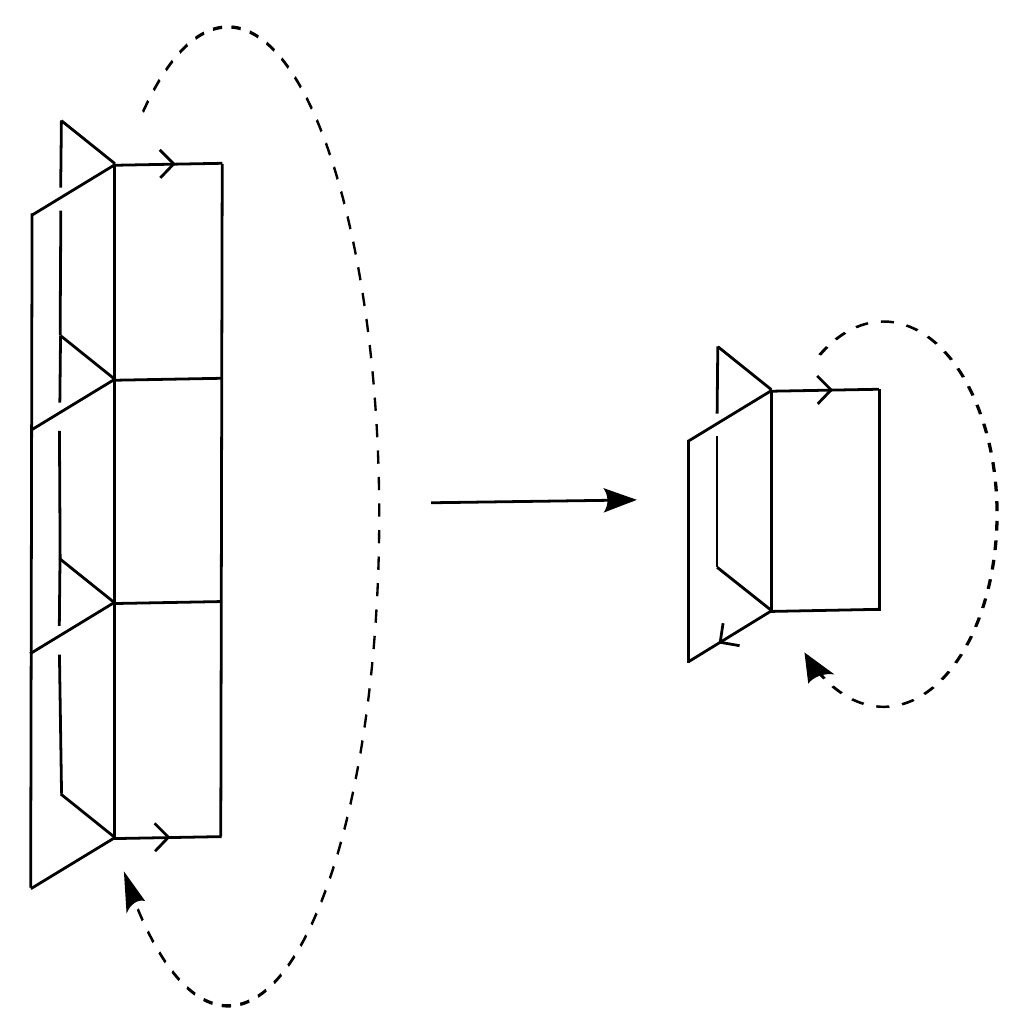}
      \put(50,54){$3$}
      \put(75,20){$\frac{2\pi}{3}$}
      \put(69,12){\Small{rotation}}
      \end{overpic}
	\caption{{\small  The degree--$3$ cover $Y_3 \times S^1 \rightarrow A_3$ described in Lemma~\ref{kmncover}. The cover is given by the $\Z/3\Z$ action on the space on the left generated by the screw motion that cyclically permutes the three fins and translates upward by one unit.  }}
      \label{figure:A_n_cover}
     \end{figure}
 
 As explained by Hatcher in \cite[Example~1.24]{hatcher}, the $2$--complex $A_{m,n}$ is a $2$--dimensional spine of a torus knot complement when $\gcd(m,n)=1$ and is closely related to its standard Seifert fibered structure.
The following result is analogous to the well-known fact that a Seifert fibered space has a finite-sheeted cover that is a product of a surface with a circle.  The result is implicit in the discussion of Example~1.35 from \cite{hatcher}.

 \begin{lem} \label{kmncover}
  The space $K_{m,n} \times S^1$ forms a degree--$mn$ cover of the space $A_{m,n}$, where $K_{m,n}$ denotes the complete bipartite graph on $m$ and $n$ vertices, and $A_{m,n}$ is described above. 
 \end{lem}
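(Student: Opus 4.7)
The plan is to exhibit $A_{m,n}$ as the quotient of $K_{m,n} \times S^1$ by a free (hence properly discontinuous) action of the finite group $\Z/m \times \Z/n$; the orbit map will then automatically be a regular covering map of degree $mn$.

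To set up the action, label the vertices of $K_{m,n}$ as $u_0, \dots, u_{m-1}$ on one side and $v_0, \dots, v_{n-1}$ on the other, with $e_{ij}$ the edge from $u_i$ to $v_j$. Let $\alpha$ and $\beta$ be generators of $\Z/m$ and $\Z/n$ respectively. I would define $\alpha$ to send $u_i \mapsto u_{i+1}$ (indices mod $m$), to fix each $v_j$, to permute edges by $e_{ij} \mapsto e_{i+1,j}$, and \emph{simultaneously} to rotate the $S^1$ factor by $2\pi/m$; symmetrically, $\beta$ fixes each $u_i$, cyclically permutes the $v_j$, sends $e_{ij} \mapsto e_{i,j+1}$, and rotates $S^1$ by $2\pi/n$. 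To see this action is free, note that an element $\alpha^p\beta^q$ fixing a point of the form $(u_i, \theta)$ must first fix $u_i$, forcing $p \equiv 0 \pmod m$; the remaining rotation of $S^1$ is then by $2\pi q/n$, which fixes $\theta$ only if $q \equiv 0 \pmod n$. The argument at $v_j$-points is symmetric, and at interior edge points it is even easier since the action permutes the $mn$ edges freely.

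To identify the quotient with $A_{m,n}$, I would take the $2$-cell $\{e_{00}\} \times S^1$ as a fundamental domain for the annular part of the space: its setwise stabilizer is trivial, so its image in the quotient is again a cylinder $S^1 \times [0,1]$. The boundary circle $\{u_0\} \times S^1$ has setwise stabilizer $\{1\} \times \Z/n$, which acts by rotation by $2\pi/n$, so its image in the quotient is a circle and $\{u_0\} \times S^1$ maps to that image by the degree-$n$ map $z \mapsto z^n$; symmetrically, $\{v_0\} \times S^1$ maps to its image by the degree-$m$ map $z \mapsto z^m$. Comparing with Notation~\ref{hatcher}, these boundary gluings are exactly the identifications $(z,0) \sim (e^{2\pi i/m}z,0)$ and $(z,1) \sim (e^{2\pi i/n}z,1)$ used to build $A_{m,n}$, which identifies the quotient with $A_{m,n}$. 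The main obstacle is designing the action so that each cyclic factor also acts nontrivially on $S^1$ (which is forced by the need for freeness along the vertex circles $\{u_i\}\times S^1$ and $\{v_j\}\times S^1$), and then verifying that the stabilizer-induced gluing degrees match the definition of $A_{m,n}$; both become transparent once the rotations are chosen as above.
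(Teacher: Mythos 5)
Your argument is correct, but it takes a genuinely different route from the paper's. The paper splits $A_{m,n}$ into the two mapping cylinders $A_m$ and $A_n$ of Notation~\ref{hatcher}, covers each half separately by an asterisk crossed with a circle ($Y_m\times S^1\to A_m$ and $Y_n\times S^1\to A_n$, each the quotient by a cyclic screw motion as in Figure~\ref{figure:A_n_cover}), and then pastes these covering maps along the boundary circles, on which they restrict to homeomorphisms, to assemble $K_{m,n}\times S^1\to A_{m,n}$. You instead exhibit a single global free action of $\Z/m\Z\times\Z/n\Z$ on $K_{m,n}\times S^1$, each cyclic factor acting by a screw motion (cyclic permutation of one side of the bipartition coupled with a rotation of the $S^1$ factor), and identify the quotient with $A_{m,n}$ directly. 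Your freeness check is right, and your computation of the induced boundary identifications matches Notation~\ref{hatcher}: the valence-$n$ vertex circles $\{u_i\}\times S^1$ sit over the order-$n$ end and the valence-$m$ circles over the order-$m$ end, consistent with the paper's picture, while preservation of the bipartition rules out any further identifications. What your approach buys is a stronger and cleaner conclusion with no pasting step: the cover is regular with deck group $\Z/m\Z\times\Z/n\Z$. What the paper's approach buys is the explicit piecewise description of the covering on the subspaces $Y_m\times S^1$ and $Y_n\times S^1$, which is the form actually quoted later (for instance in the proof of Theorem~\ref{thm:surf_amal_3mancover}); your version recovers those restrictions as well, since you computed the degrees on the vertex circles. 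One cosmetic slip: the image in the quotient of the closed cell $e_{00}\times S^1$ is all of $A_{m,n}$, not an embedded cylinder; what you mean, and in fact use, is that the quotient map is injective on its interior, so the quotient is obtained from $S^1\times[0,1]$ by exactly the two boundary identifications you compute.
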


  \begin{proof}
   Let $Y_n$ denote the finite biregular tree $T_{1,n}$, the ``$n$--pointed asterisk'' (see Definition~\ref{treedef}). Let $\rho_n\colon Y_n \rightarrow Y_n$ denote a cyclic permutation of the $n$ leaves of $Y_n$. We claim that $Y_n \times S^1$ forms a degree--$n$ cover of the space $A_n$ defined in Notation~\ref{hatcher}. Indeed, let $\tau_n\colon S^1 \rightarrow S^1$ be given by $z \mapsto e^{2\pi i/n}z$. Then, there is a $\Z /n\Z$ action on $Y_n \times S^1$ generated by the map $(\rho_n, \tau_n)\colon Y_n \times S^1 \rightarrow Y_n \times S^1$. The quotient under this action is the space $A_n$. An example is given in Figure~\ref{figure:A_n_cover}. Each boundary curve of $Y_n \times S^1$ is mapped homeomorphically to the boundary curve of $A_n$. Similarly, the space $Y_m \times S^1$ forms a degree--$m$ cover of $A_m$ so that each boundary curve of $Y_m \times S^1$ is mapped homeomorphically to the boundary curve of $A_n$. 

   The covering maps $Y_n \times S^1 \rightarrow A_n$ and $Y_m \times S^1 \rightarrow A_m$ restrict to homeomorphisms on the boundary curves, so copies of these spaces may be glued together to form a cover of $A_{m,n}$ homeomorphic to $K_{m,n} \times S^1$ as illustrated in the central column of Figure~\ref{bounding_pair_cover}.
Indeed $K_{m,n} \times S^1$ consists of $m$ copies of $Y_n\times S^1$ and $n$ copies of $Y_m \times S^1$ such that each pair of opposite types intersects in a single circle.
We define a covering map from $K_{m,n} \times S^1 \to A_{m,n}$ in the obvious way by pasting together the various covering maps defined on these subsets.
  \end{proof}

The following corollary is immediate.  See \cite[Example~1.35]{hatcher} for an alternate explanation.

\begin{cor}
\label{A_mn_universal}
 The universal cover of $A_{m,n}$ is $T_{m,n} \times \R$, where $T_{m,n}$ denotes the biregular tree with vertices of valance $m$ and $n$. \qed
\end{cor}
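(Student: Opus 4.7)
The strategy is to build the universal cover through a finite-sheeted intermediate cover, exploiting the lemma just proved. By Lemma~\ref{kmncover}, the space $K_{m,n} \times S^1$ is a (finite, hence honest) covering space of $A_{m,n}$, so to identify the universal cover of $A_{m,n}$ it suffices to identify the universal cover of $K_{m,n} \times S^1$ and observe that the composition of covering maps is a covering map. Since the universal cover of a product is the product of the universal covers of the factors, the plan reduces to computing $\widetilde{K_{m,n}}$ and $\widetilde{S^1}$.

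The universal cover of $S^1$ is $\R$. The graph $K_{m,n}$ is a connected $1$--dimensional CW complex with bipartition classes of size $m$ and $n$ in which every vertex of the first class has valence $n$ and every vertex of the second has valence $m$; thus its universal cover is a tree in which vertices alternate between valence $m$ and valence $n$ along each edge, which is precisely $T_{m,n}$ as defined in Definition~\ref{treedef}. Consequently, the universal cover of $K_{m,n} \times S^1$ is $T_{m,n} \times \R$.

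Finally, since $T_{m,n} \times \R$ is simply connected (as a product of a tree and a line) and covers $K_{m,n} \times S^1$, which in turn covers $A_{m,n}$, the composite map $T_{m,n} \times \R \to A_{m,n}$ is a covering map from a simply connected space. This is by definition the universal cover of $A_{m,n}$.

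There is no real obstacle here; the only subtlety is to observe that the valence structure of $K_{m,n}$ matches the paper's convention for $T_{m,n}$, and to invoke Lemma~\ref{kmncover} to transport the computation of the universal cover across the finite cover.
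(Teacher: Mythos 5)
Your proof is correct and is exactly the deduction the paper intends: the corollary is stated as immediate from Lemma~\ref{kmncover}, by passing to the finite cover $K_{m,n}\times S^1$, whose universal cover $T_{m,n}\times\R$ (product of $\widetilde{K_{m,n}}=T_{m,n}$ with $\widetilde{S^1}=\R$) then covers $A_{m,n}$ via the composite. Your added care about composing covering maps (finite intermediate cover, or equivalently $A_{m,n}$ being a CW complex hence semi-locally simply connected) is a fine justification of the word ``immediate.''
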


\begin{cons}[Model geometry for $G \in \mathcal{C}$] 
\label{surfmodel}

Let $X$ be the union of surfaces $S_g$, $S_h$, and an annulus, so that one boundary component of the annulus is attached to the curve $\gamma_g$ on $S_g$ by a degree--$m$ map and the other boundary component of the annulus is attached to the curve $\gamma_h$ on $S_h$ by a degree--$n$ map. Then, $A_{m,n}$ is a subspace of $X$.

We will metrize $X$ so that $\widetilde{X}$, the universal cover of $X$, is isometric to a model space given in Construction~\ref{modelspaces}. We first put a metric on the surfaces with boundary $S_g \backslash \gamma_g$ and $S_h \backslash \gamma_h$ so that the universal cover is a fattened tree. The construction is illustrated in Figure~\ref{surfmetric}.

\begin{figure}
\centering
\includegraphics[scale=.4]{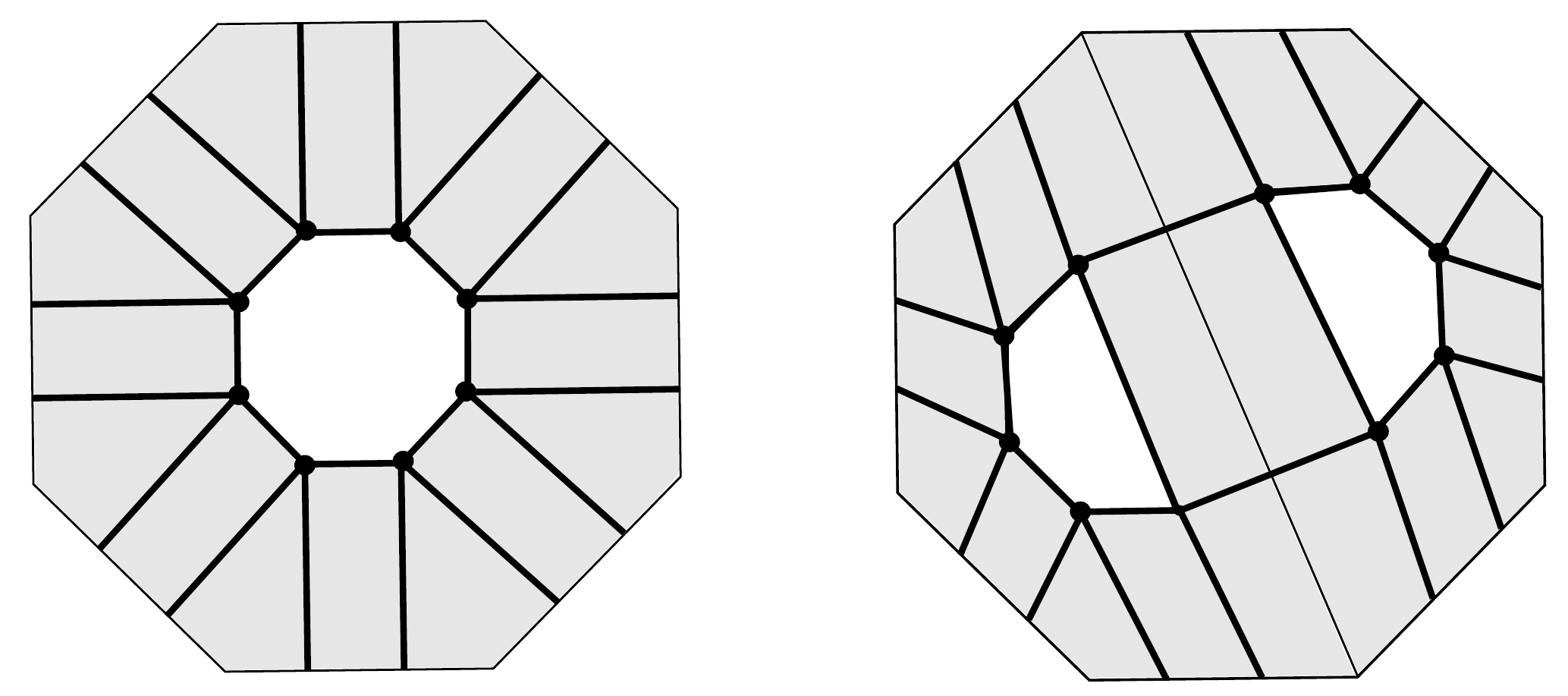}
\caption{{\small Identify opposite sides of the outer octagons to form a surface with genus two; on the left the surface has one boundary component, and on the right, the surface has two boundary components. The universal cover of the surface, with the metric on the cells described in Construction~\ref{surfmodel}, is a fattened tree.   }}
\label{surfmetric}
\end{figure}

Let $S$ be a connected surface with genus $g \geq 1$ and either one or two boundary components. Realize $S$ as a regular $4g$--gon $P$ with opposite sides identified and with boundary components in the interior of the polygon. 

Suppose first that $S$ has one boundary component. Subdivide each side of $P$ into three segments of equal length, and subdivide the boundary component of $S$ into $4g$ segments of equal length. On each side of $P$ connect each of the two interior vertices to a vertex on the boundary component of $S$ by edges whose interiors are pairwise disjoint such that each vertex on the boundary of $S$ connects to two vertices that lie on adjacent sides of the polygon $P$. This construction realizes $S$ as a cell complex whose $0$--cells all lie on the boundary of $S$. The $2$--cells of $S$ that do not contain corners of $P$ are rectangles, and $S$ contains one more $2$--cell that is a $4g$--gon containing all of the corners of the polygon $P$. Metrize the cell complex $S$ so that the $4g$--gon is a regular Euclidean polygon with sides of length one, each rectangle is isometric to $[-\frac{1}{2},\frac{1}{2}] \times [0,L]$, where the value of $L \geq 1$ depends on $G$ and is chosen below, and the edges $[-\frac{1}{2},\frac{1}{2}] \times \{0\}$ and $[-\frac{1}{2},\frac{1}{2}] \times \{L\}$ of the rectangle are identified to the edges of the $4g$--gon by isometries. Then, the boundary curve of $S$ is a circle of length $4gL$, and, since $S$ deformation retracts onto the sides of $P$, the universal cover of $S$ is isometric to a fattened tree. 

Suppose now that $S$ has two boundary components. 
Add a diagonal to $P$ that subdivides $P$ into two $(2g+1)$--gons each containing one boundary component of $S$.
Repeat the above subdivision construction on each of the two polygons to produce a cell structure on $S$ as illustrated on the right side of Figure~\ref{surfmetric} whose $0$--cells all lie on the boundary of $S$. The $2$--cells of $S$ consist of rectangles that do not contain corners of $P$ along with a single $(4g+2)$--gon that contains all of the corners of the polygon $P$.  Endow $S$ with a piecewise Euclidean metric as above such that each rectangle is isometric to $\bigl[ -\frac{1}{2},\frac{1}{2} \bigr] \times [0,K]$ for a constant $K$ chosen below.
Then each boundary curve of $S$ is a circle of length $(2g+1)K$, and, since $S$ deformation retracts onto the graph formed by the sides and chosen diagonal of $P$, the universal cover of $S$ is isometric to a fattened tree. 

The choice of the length of the rectangles in the construction depends on the group $G$. If $\gamma_g$ separates $S_g$ into two subsurfaces of genus $g_1$ and $g_2$, the lengths $L_1, L_2 \geq 1$ must satisfy $4g_1 L_1 = 4g_2 L_2$, which will equal the length of $\gamma_g$. If $\gamma_g$ is non-separating, its length is equal to $(2g_3+1)L_3$, where $g_3$ is the genus of $S_g \backslash \gamma_g$ and $L_3$ is the length of the rectangle. The length of $\gamma_h$ is similar. In addition, choose the constants $L_i$ and $K_i$ so that $m$ times the length of $\gamma_g$ is equal to $n$ times the length of $\gamma_h$, which is equal to some constant $C$.  

By Corollary~\ref{A_mn_universal}, the universal cover of the subspace $A_{m,n}$ is $T_{m,n} \times \R$. So the universal cover $\widetilde{X}$ of $X$ is isometric to a model space of type $(m,n,\XX,2)$ given in Construction~\ref{modelspaces}. 
\end{cons}

\begin{rem}
 The construction above can be used to metrize any surface $S$ with nonempty boundary so that the universal cover is a fattened tree: realize $S$ as a polygon with sides identified and boundary components in the interior of the polygon. Choose an embedded graph that $S$ deformation retracts onto and perform the construction described above. 
\end{rem}

%%%%%%%%%%%%%%%%%%%
%%%%%%%%%%%%%%%%%%%
\section{Abstract commensurability classes}
\label{sec:ACclasses}
%%%%%%%%%%%%%%%%%%%
%%%%%%%%%%%%%%%%%%%

In this section, we prove Theorem~\ref{3mancovers}, which states that each surface amalgam in $\cC$ is abstractly commensurable to a right-angled Coxeter group in $\cW$. In Section~\ref{sec:Euler}, we define the Euler characteristic vector for a group in $\cW$; preliminary covering maps are given in Section~\ref{sec:prelim_covers}; Theorem~\ref{3mancovers} is proven in Section~\ref{sec:AC_thm_proof}; and, Section~\ref{sec:add_comm} contains additional commensurabilities. 

\subsection{Euler characteristic vector}
\label{sec:Euler}

Our results about abstract commensurability classes involve the Euler characteristic vector. The abstract commensurability classification is open for the non-hyperbolic groups in $\mathcal{C}$ and $\mathcal{W}$; in the $\delta$--hyperbolic setting, the abstract commensurability classes can be given in terms of the Euler characteristic vector; see \cite{danistarkthomas}. 

 \begin{defn} (Euler characteristic vector.)
 Let $\Theta = \Theta(n_1, \ldots, n_k)$ be a generalized $\Theta$-graph, and let $W_{\Theta}$ be the associated right-angled Coxeter group.
 As described in Construction~\ref{racgmodel}, the group $W_{\Theta}$ is the orbifold fundamental group of a right-angled reflection orbi-complex $\mathcal{O}_{\Theta}$. The space $\mathcal{O}_{\Theta}$ is the union of right-angled reflection orbifolds $P_i$ for $1 \leq i \leq k$, where $P_i$ has $n_i+2$ reflection edges and one non-reflection edge, and the union identifies the non-reflection edges of $P_i$ for $1 \leq i \leq k$.  The orbifold Euler characteristic of $P_i$ is given by the formula
\[
   \chi_i = \chi(P_i) = 1 - \left(1+\frac{n_i+2}{2}\right) + \left(\frac{2}{2} + \frac{n_i+1}{4}\right) = \frac{1-n_i}{4}.
\]
If $\ell$ is the linear degree of $\Theta$ and $h$ is the hyperbolic degree of $\Theta$, then $\chi_i = 0$ for $1 \leq i \leq \ell$ and $\chi_i <0$ for $\ell+1 \leq i \leq \ell+h$. The {\it Euler characteristic vector} of $W_{\Theta}$ is 
\[
   \chi_{\Theta} = (\chi_1, \ldots, \chi_k) = (\underbrace{0, \ldots, 0}_{\text{$\ell$}}, \chi_{\ell+1}, \ldots, \chi_{\ell+h} ).
\] 
\label{eulercharvector}
\end{defn}

\subsection{Covering maps and finite-index subgroups}
\label{sec:prelim_covers}

In this section, we describe some tools used to construct finite-index subgroups. First, we use the following lemma from \cite{danistarkthomas}, which proves that an orbi-complex with fundamental group a right-angled Coxeter group with generalized $\Theta$--graph nerve has a finite cover consisting of surfaces with two boundary curves, identified to each other along these curves. An example of the covering space appears in the lower left of Figure~\ref{2foldcovers}.

 \begin{lem}[\cite{danistarkthomas}, Section~4]
 \label{degree16}
      Let $W$ be a right-angled Coxeter group with generalized $\Theta$--graph nerve and with Euler characteristic vector $(\chi_1,\ldots, \chi_k)$. Let $\mathcal{O}$ be the orbi-complex given in Construction~\ref{racgmodel} with orbifold fundamental group $W$. Then there is a degree--$16$ cover $Z \rightarrow \mathcal{O}$, so that $Z$ consists of $k$ surfaces $S_1, \ldots, S_k$, where $S_i$ has two boundary curves, $C_{i1}$ and $C_{i2}$ and $\chi(S_i) = 16\chi_i$ for $1 \leq i \leq k$. The curves $\set{C_{i1}}{1 \leq i \leq k}$ are identified to form a single curve $C_1$ and the curves $\set{C_{i2}}{1 \leq i \leq k}$ are identified to form a single curve $C_2$.
 \end{lem}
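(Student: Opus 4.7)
The plan is to realize $Z \to \mathcal{O}$ as the regular covering corresponding to the kernel of a surjective homomorphism $\phi \colon W_\Theta \to G$ with $|G| = 16$, chosen so that the geometric conclusions of the lemma translate into algebraic conditions on $\phi$: (a) $Z$ is free of orbifold isotropy iff $\phi$ is injective on each finite point stabilizer of the action $W_\Theta \curvearrowright \Sigma_\Theta$, i.e.\ on each $\langle s\rangle \cong \Z/2\Z$ for a generator $s$ and on each $\langle s, t\rangle \cong (\Z/2\Z)^2$ for an edge $\{s, t\}$ of $\Theta$; (b) the branching edge $\tau$, whose orbifold fundamental group is $\langle a, b\rangle \cong D_\infty$, lifts to exactly two circles iff $\phi|_{\langle a, b\rangle}$ has image of order $8$ (giving $|G|/8 = 2$ components in the preimage of $\tau$) with kernel the translation subgroup $\langle (ab)^4\rangle \leq D_\infty$ (so each component is a smooth circle rather than a $1$-orbifold); (c) each $P_i$ lifts to a connected surface $S_i$ iff $\phi$ restricts surjectively on each orbifold fundamental group $W_i = \langle a, v_{i,1}, \ldots, v_{i, n_i}, b\rangle$ of $P_i$.

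I would then exhibit such a $\phi$ by taking $G = D_8 \times \Z/2\Z$, where $D_8 = \langle r, s \mid r^4 = s^2 = (rs)^2 = 1\rangle$, via
\[
 \phi(a) = (s, 0), \quad \phi(b) = (rs, 0), \quad \phi(v_{i, j}) = \begin{cases} (1, 1) & \text{if $j$ is odd,} \\ (r^2, 1) & \text{if $j$ is even.}\end{cases}
\]
Each image is an involution, and the elements $(1, 1)$ and $(r^2, 1)$ are central in $G$, so every commuting pair of generators (an edge of $\Theta$) has commuting images, whence $\phi$ is a well-defined homomorphism. Now $\langle \phi(a), \phi(b)\rangle = D_8 \times \{0\}$ has order $8$, and adjoining $(1, 1)$ yields all of $G$, so $\phi$ and each $\phi|_{W_i}$ are surjective, giving (c). The identity $\phi((ab)^k) = (r^{-k}, 0)$ shows $\ker\phi \cap \langle a, b\rangle = \langle (ab)^4\rangle$, giving (b). Finally (a) holds by inspection: distinct generators have distinct non-trivial images, and each adjacent pair $\{s, t\}$ in $\Theta$ generates a subgroup of $G$ isomorphic to $(\Z/2\Z)^2$.

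Setting $Z = \Sigma_\Theta/\ker\phi$, condition (a) ensures $Z$ inherits from $\mathcal{O}$ the structure of a $2$-complex of surface pages meeting along a $1$-dimensional branching set; (b) lifts the branching edge $\tau$ to exactly two circles $C_1, C_2$, each a degree-$8$ cover of $\tau$; and (c) makes each lift $S_i$ of $P_i$ connected. Since every reflection generator of $W_i$ has non-trivial $\phi$-image, each mirror edge of $P_i$ unfolds in the cover to an interior curve of $S_i$, so $\partial S_i$ consists solely of lifts of $\tau$, namely $C_1 \sqcup C_2$, which are shared among all the $S_j$. The identity $\chi(S_i) = 16 \chi_i$ follows from the multiplicativity of orbifold Euler characteristic under finite covers.

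The crux is designing $\phi$ to satisfy (b) and (c) simultaneously: (b) forces $\phi(\langle a, b\rangle)$ to be a proper index-$2$ subgroup of $G$, whereas (c) forces each $\phi(W_i)$ to be all of $G$. This rules out abelian $G$ and requires a group of order $16$ containing $D_8$ as an index-$2$ subgroup, together with a central involution outside $D_8 \times \{0\}$ onto which the internal generators can map. The alternating choice of $(1, 1)$ versus $(r^2, 1)$ across consecutive $v_{i,j}$'s is then forced by the injectivity requirement (a) for adjacent commuting pairs.
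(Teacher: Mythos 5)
Your construction is correct, and it takes a genuinely different route from the paper, which in fact offers no argument of its own for Lemma~\ref{degree16}: the lemma is imported from Section~4 of \cite{danistarkthomas}, where the degree--$16$ cover is built directly by explicit covering-space constructions on the orbicomplex. You instead obtain it in one step as the regular cover $Z=\Sigma_\Theta/\ker\phi$ of the orbicomplex of Construction~\ref{racgmodel} attached to an explicit surjection $\phi\colon W_\Theta\to D_8\times\Z/2\Z$, and your three criteria are exactly the right ones: injectivity of $\phi$ on the spherical special subgroups forces $\ker\phi$ to act freely, so $Z$ is an honest complex of surfaces; surjectivity of each $\phi|_{W_i}$ makes the lift $S_i$ of $P_i$ connected; and $\phi(\langle a,b\rangle)=D_8\times\{0\}$ of index two, with $\ker\phi\cap\langle a,b\rangle=\langle (ab)^4\rangle$ a translation subgroup, gives exactly two components over the branching edge, each a circle covering it with degree $8$ --- both in $Z$ and inside each $S_i$, since in each case the components are counted by double cosets, which by surjectivity are in bijection with $G/\phi(\langle a,b\rangle)$. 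Hence each $S_i$ has one boundary circle on each of $C_1$ and $C_2$, and $\chi(S_i)=16\chi_i$ by multiplicativity of the orbifold Euler characteristic. Your homomorphism is well defined because $a$ and $b$ are not adjacent in a generalized $\Theta$--graph (every $n_i\geq 1$) and all internal generators map to central involutions, and the computation $\phi\bigl((ab)^k\bigr)=(r^{-k},0)$ correctly identifies the kernel inside $\langle a,b\rangle$. Two small points to tighten: the clause ``distinct generators have distinct non-trivial images'' is false as literally stated (for instance $v_{1,1}$ and $v_{2,1}$ share the image $(1,1)$) and is not what is needed --- the relevant check, which you do carry out, is that the two endpoints of each \emph{edge} of $\Theta$ have distinct nontrivial images; and the coset-counting step tacitly uses that the setwise stabilizer in $W_\Theta$ of a lift of the branching edge (respectively of a copy of $\widetilde{P_i}$) is exactly $\langle a,b\rangle$ (respectively $W_i$), a standard fact about special subgroups acting on the Davis complex that deserves a sentence. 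What your approach buys is a short, self-contained, purely algebraic proof that also explains why degree $16$ and a nonabelian deck group are natural (an abelian quotient cannot make $\phi(\langle a,b\rangle)$ have order $8$ while each $\phi(W_i)$ has order $16$); what the cited construction buys is explicit pictures of the cover that \cite{danistarkthomas} reuse elsewhere.
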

 
The following lemma states that a $d$--fold covering of the boundary of a surface can be extended to a $d$--fold covering of the entire surface provided a natural parity condition holds. 

 \begin{lem}[\cite{neumann}, Lemma~3.2]
 \label{neumann}
  If $S$ is an orientable surface of positive genus, a degree $d \geq 1$ is specified, and for each boundary component of $S$ a collection of degrees summing to $d$ is also specified, then a connected $d$--fold covering $S'$ of $S$ exists with prescribed degrees on the boundary components of $S'$ over each boundary component of $S$ if and only if the prescribed number of boundary components of the cover has the same parity as $d\chi(S)$. 
 \end{lem}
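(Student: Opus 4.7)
\emph{Proof sketch.} The plan is to use the standard bijection between connected $d$--fold covers of $S$ and transitive homomorphisms $\rho\colon \pi_1(S) \to \mathrm{Sym}(d)$ up to post-conjugation. Under this bijection, the boundary components of the cover $S'$ above a boundary loop $c$ of $S$ are in bijection with the cycles in the cycle decomposition of $\rho(c)$, with the length of each cycle equal to the degree of the restricted covering map.

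Necessity is a direct Euler characteristic count. If $p\colon S' \to S$ is a connected $d$--fold cover realizing the prescription, then $S'$ is an orientable surface of some genus $g' \geq 0$ with $b'$ boundary components, where $b'$ is the prescribed total. Multiplicativity of Euler characteristic gives $\chi(S') = d\,\chi(S)$, and since $\chi(S') = 2 - 2g' - b' \equiv b' \pmod{2}$, we conclude $b' \equiv d\,\chi(S) \pmod{2}$.

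For sufficiency, choose for each boundary loop $c_i$ a permutation $\sigma_i \in \mathrm{Sym}(d)$ whose cycle type realizes the prescribed partition of $d$. A standard presentation of $\pi_1(S)$ is
\[
  \bigpresentation{a_1,b_1,\ldots,a_g,b_g,\,c_1,\ldots,c_b}{[a_1,b_1]\cdots[a_g,b_g] = c_1 c_2 \cdots c_b},
\]
which is free on the $a_j, b_j$ together with $c_1,\ldots,c_{b-1}$. So declaring $\rho(c_i) = \sigma_i$ extends to a homomorphism exactly when one can find $\alpha_j := \rho(a_j)$ and $\beta_j := \rho(b_j)$ with
\[
  [\alpha_1,\beta_1]\cdots[\alpha_g,\beta_g] = \sigma_1 \sigma_2 \cdots \sigma_b.
\]
Since every commutator in $\mathrm{Sym}(d)$ lies in the alternating group $A_d$, this equation is solvable iff $\sigma_1 \cdots \sigma_b \in A_d$. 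A brief sign computation using $\mathrm{sgn}(\sigma_i) = (-1)^{d - k_i}$, where $k_i$ is the number of cycles of $\sigma_i$, reduces this to $b' \equiv bd \pmod{2}$; and since $d\,\chi(S) \equiv bd \pmod{2}$, this is precisely the parity hypothesis. As $g \geq 1$, the required commutator factorization is routine: for $d \geq 2$ every element of $A_d$ is already a single commutator in $\mathrm{Sym}(d)$, so one commutator factor absorbs $\sigma_1 \cdots \sigma_b$ and the remaining $g-1$ factors may be taken trivial.

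The last step is to ensure $\rho$ is transitive, so that the resulting cover is connected. I expect this to be the main technical obstacle, since one must coordinate the prescribed cycle types, the sign constraint, and transitivity simultaneously. The positive-genus hypothesis supplies free parameters: either by adjusting the commutator representative of $\sigma_1 \cdots \sigma_b$, or by conjugating individual $\sigma_i$ within their cycle-type class (which corresponds to relabeling sheets of the cover), one can enlarge the orbit of the image of $\rho$ until it acts transitively on $\{1,\ldots,d\}$ while preserving all previously arranged constraints. The resulting transitive $\rho$ then corresponds to the desired connected $d$--fold cover $S'$.
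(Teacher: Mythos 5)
The lemma you are proving is not proved in the paper at all: it is quoted verbatim from Neumann \cite{neumann}, Lemma~3.2, so the only comparison available is with the standard argument in that source, which — like your proposal — runs through the correspondence between connected $d$--fold covers and transitive homomorphisms $\rho\colon\pi_1(S)\to\mathrm{Sym}(d)$. Within that framework your necessity direction is correct, and your parity bookkeeping is correct: with $\mathrm{sgn}(\sigma_i)=(-1)^{d-k_i}$, the product $\sigma_1\cdots\sigma_b$ lies in $A_d$ exactly when $\sum_i k_i \equiv bd \equiv d\chi(S) \pmod 2$. (Minor point: the surface relator is $[a_1,b_1]\cdots[a_g,b_g]\,c_1\cdots c_b=1$, so what you must express as a product of $g$ commutators is the inverse of $\sigma_1\cdots\sigma_b$; this changes nothing since the parity is the same.)

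The genuine gap is transitivity, which you explicitly defer and then address only with a heuristic. This is not a technical afterthought: connectedness of $S'$ \emph{is} transitivity of $\rho$, so it is the heart of the existence statement. The two fixes you gesture at do not work as stated. Conjugating an individual $\sigma_i$ within its conjugacy class preserves its cycle type but changes the product $\sigma_1\cdots\sigma_b$, so the commutator equation must be re-solved after each such move, and you give no argument that one can simultaneously keep a solution and strictly enlarge the orbits of the image; likewise ``adjusting the commutator representative'' is not backed by any mechanism. A concrete repair, which uses the positive-genus hypothesis exactly where it is needed, is the classical fact that every even permutation of $\{1,\ldots,d\}$ is a product of two $d$--cycles. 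Writing $(\sigma_1\cdots\sigma_b)^{-1}=\gamma_1\gamma_2$ with $\gamma_1,\gamma_2$ $d$--cycles, note that $\gamma_2$ is conjugate to $\gamma_1^{-1}$, say $\gamma_2=\beta\gamma_1^{-1}\beta^{-1}$, so $(\sigma_1\cdots\sigma_b)^{-1}=[\gamma_1,\beta]$. Setting $\rho(a_1)=\gamma_1$, $\rho(b_1)=\beta$, $\rho(a_j)=\rho(b_j)=1$ for $j\geq 2$, and $\rho(c_i)=\sigma_i$ defines a homomorphism whose image contains the $d$--cycle $\gamma_1$ and is therefore automatically transitive; this simultaneously replaces your unproved claim that every element of $A_d$ is a single commutator in $\mathrm{Sym}(d)$. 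With that substitution your argument becomes a complete proof along the same lines as the cited one.
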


 We make repeated use of the following simple procedure to paste together covering maps. 
 
 \begin{lem} \label{coverpaste}
  Let $A$ and $B$ be topological spaces with $A \cup B = X$ and $A \cap B = C \neq \emptyset$. Let $p_1\colon\widehat{A} \rightarrow A$ and $p_2\colon\widehat{B} \rightarrow B$ be covering maps, and let $C_1 = p_1^{-1}(C) \subset \widehat{A}$ and $C_2 = p_2^{-1}(C)\subset \widehat{B}$. Suppose $f\colon C_1 \rightarrow C_2$ is a covering isomorphism, so $p_1= p_2 \circ f$ on $C_1$. Then, $\widehat{X} = \widehat{A} \cup_f \widehat{B}$, (with the union notation defined in Definition~\ref{union}) is a cover of $X$. \end{lem}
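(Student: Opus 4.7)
The plan is to produce an explicit covering map $p \colon \widehat{X} \to X$ by pasting $p_1$ and $p_2$ together, and then verify the three properties of a covering map in turn.

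First, I would define $p$ by the rule $p([y]) = p_1(y)$ when $y \in \widehat{A}$ and $p([y]) = p_2(y)$ when $y \in \widehat{B}$, where $[y]$ denotes the class of $y$ in $\widehat{X} = \widehat{A} \sqcup \widehat{B} / (y \sim f(y))$. Well-definedness on glued pairs is exactly the hypothesis $p_1 = p_2 \circ f$ on $C_1$: if $y \in C_1$ and $y' = f(y) \in C_2$, then $p_1(y) = p_2(f(y)) = p_2(y')$, so the two candidate values agree. Continuity of $p$ then follows from the universal property of the quotient topology: the two maps $p_1$ and $p_2$ induce continuous maps into $X$ on the disjoint union $\widehat{A} \sqcup \widehat{B}$, and they descend to the quotient precisely because of the compatibility condition above. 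Surjectivity is immediate from $X = A \cup B$ together with surjectivity of $p_1$ and $p_2$.

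The substantive step is to exhibit an evenly covered neighborhood of each $x \in X$. For $x \in A \setminus C$ (and symmetrically for $x \in B \setminus C$), pick a neighborhood $V \subset A \setminus C$ of $x$ in $X$ that is evenly covered by $p_1$; each sheet of $p_1^{-1}(V) \subset \widehat{A} \setminus C_1$ descends homeomorphically to a sheet in $\widehat{X}$ over $V$, since no identifications occur off $C_1$. For $x \in C$, I would choose a neighborhood $U$ of $x$ in $X$ small enough that $U \cap A$ is evenly covered by $p_1$, $U \cap B$ is evenly covered by $p_2$, and $U \cap C$ is evenly covered by the restricted covers. Over each sheet $\widetilde{U}$ of $U \cap C$ in $C_1$ (equivalently in $C_2$, via the covering isomorphism $f$), the sheets of $p_1^{-1}(U \cap A)$ extending $\widetilde{U}$ and the sheets of $p_2^{-1}(U \cap B)$ extending $f(\widetilde{U})$ glue along $\widetilde{U} \sim f(\widetilde{U})$ to form a single open set in $\widehat{X}$ that maps homeomorphically onto $U$. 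Because $f$ is a covering isomorphism, this gluing is a bijection on sheets; hence $p^{-1}(U)$ is a disjoint union of such sheet-shaped open sets, each projecting homeomorphically to $U$.

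The main obstacle is the local step over $C$: it requires simultaneously trivializing $p_1$, $p_2$, and their restrictions to $C$ on a common neighborhood of $x$, and then matching their sheets via $f$. This is where the hypothesis that $f$ is a covering isomorphism (rather than merely a fiber-preserving map) is essential — it provides a bijection between sheets of $p_1$ above $U \cap C$ and sheets of $p_2$ above $U \cap C$ that is compatible with the projection, so that no sheet of $\widehat{A}$ is left without a partner in $\widehat{B}$ when the identification is performed. In the applications later in the paper, the spaces $A$, $B$ and $C$ will be subcomplexes of a CW-complex, so these local neighborhoods always exist with the required properties, and no extra topological hypotheses are needed.
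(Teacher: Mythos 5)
The paper states this lemma without proof (it is treated as an elementary pasting procedure), so there is no argument of the authors to compare against; your strategy --- define $p$ on the quotient via $p_1$ and $p_2$, use $p_1 = p_2 \circ f$ for well-definedness, continuity from the quotient topology, and then verify even covering separately at points off $C$ and at points of $C$ --- is exactly the natural one, and it is adequate for every use of the lemma in the paper.

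Two steps, however, silently use more than the lemma's literal hypotheses, and you should name the extra assumptions rather than defer wholesale to the CW setting. First, for $x \in A \setminus C$ you pick a neighborhood $V \subset A \setminus C$ of $x$ \emph{in $X$}; such a $V$ exists only if $A \setminus C = X \setminus B$ is open in $X$, i.e.\ if $B$ is closed (for arbitrary $A$, $B$ with $A \cup B = X$, the point $x$ could be a limit of points of $B \setminus A$, and then every $X$-neighborhood meets $\widehat{B}$'s image). Second, at $x \in C$ the sentence ``because $f$ is a covering isomorphism, this gluing is a bijection on sheets'' is the crux and does not follow from the hypothesis alone: $f$ gives fiberwise bijections and is continuous, but the decomposition of $p_1^{-1}(U \cap C)$ into pieces cut out by the sheets of $p_1^{-1}(U \cap A)$ need not correspond under $f$ to the decomposition cut out by the sheets of $p_2^{-1}(U \cap B)$ when $U \cap C$ is disconnected --- the trace of a single $A$-sheet can be carried by $f$ into several different $B$-sheets, and then your glued candidate contains two sheets over $U \cap B$ and fails to map injectively onto $U$. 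The clean fix is to require (or arrange, by shrinking $U$) that $U \cap C$ be connected, which forces each $f$-image of an $A$-sheet's trace to lie in a single $B$-sheet; one should also record that each glued candidate is open in $\widehat{X}$ because its preimage in $\widehat{A} \sqcup \widehat{B}$ is precisely the union of the two matched sheets. Both conditions hold in all applications in the paper ($A$, $B$ are closed subcomplexes of a $2$--complex and $C$ is a disjoint union of circles with arbitrarily small neighborhoods meeting $C$ in connected sets), so your proof closes there, but as written the local step at $C$ has a genuine gap in the generality in which the lemma is stated.
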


 \begin{defn} \label{bounding}
  A \emph{bounding pair} in a surface is a pair of disjoint, homologous, nonseparating simple closed curves. 
 \end{defn}
 
 An image of bounding pairs and an illustration of the following lemma appears in Figure~\ref{bounding_pair_cover}.

 \begin{lem} \label{boundingpair}
  If $\gamma\colon S^1 \rightarrow S_g$ is an essential simple closed curve, then there exists a double cover $\widehat{S}_g \rightarrow S_g$ where the pre-image of $\gamma$ is a bounding pair. 
 \end{lem}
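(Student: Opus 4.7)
The plan is to split into cases based on whether $\gamma$ is separating or non-separating, constructing the double cover $\widehat{S}_g$ in each case by taking suitable covers of the components of $S_g \setminus \gamma$ and pasting them back together via Lemma~\ref{coverpaste}.

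In the non-separating case, $S_g \setminus \gamma$ is a single connected surface $S'$ with two boundary circles, both homeomorphic to $\gamma$, so that $S_g$ is recovered from $S'$ by identifying its two boundaries via some homeomorphism $h$. I would form $\widehat{S}_g$ by taking two disjoint copies $S'_0, S'_1$ and gluing them cyclically: the positive boundary of $S'_i$ to the negative boundary of $S'_{1-i}$ via $h$ for $i=0,1$. Pasting the two identity maps $S'_i \to S'$ via Lemma~\ref{coverpaste} exhibits the result as a degree-two connected cover $\widehat{S}_g \to S_g$ whose preimage of $\gamma$ consists of the two identification circles $C_1, C_2$.

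In the separating case, I would write $S_g = A \cup_\gamma B$, where both $A$ and $B$ have a single boundary component and positive genus (the latter because $\gamma$ is essential). The strategy is to find a connected double cover $\widehat{A} \to A$ in which $\gamma$ lifts to two disjoint circles each mapping homeomorphically, and likewise a cover $\widehat{B} \to B$, and then glue these along matching boundary circles. The existence of $\widehat{A}$ is the content of Lemma~\ref{neumann} applied with degree $d=2$ and two boundary components prescribed, each of degree one over $\gamma$; the required parity condition holds automatically because $2$ and $2\chi(A)$ are both even. Pasting $\widehat{A}$ and $\widehat{B}$ along the pairs of boundary circles via Lemma~\ref{coverpaste} produces the desired double cover, whose preimage of $\gamma$ is the two identification circles $C_1, C_2$.

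Finally, in both constructions I would verify that $\{C_1, C_2\}$ is a bounding pair: the two circles are disjoint by construction, and cutting $\widehat{S}_g$ along both yields two components (either $S'_0 \sqcup S'_1$ or $\widehat{A} \sqcup \widehat{B}$), so the two circles are homologous; while cutting along $C_1$ alone leaves the $C_2$-gluing intact, producing a connected surface, so $C_1$ is non-separating, and $C_2$ similarly by symmetry. The step that requires the most care is the separating case, where the appeal to Neumann's lemma is what guarantees that connected double covers of $A$ and $B$ with the prescribed trivially-lifting boundary behavior actually exist; the non-separating case, by contrast, admits a direct cyclic construction with no hypothesis to verify beyond the essentiality of $\gamma$.
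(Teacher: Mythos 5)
Your proof is correct and follows essentially the same route as the paper: the non-separating case is handled by the cyclic double cover built from two copies of the cut surface, and the separating case by applying Lemma~\ref{neumann} with $d=2$ to each side and pasting via Lemma~\ref{coverpaste}. Your explicit verification of the parity condition and of the bounding-pair property (homologous because the pair jointly separates, each curve individually non-separating) just makes precise what the paper leaves implicit.
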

 
 \begin{proof}
  Let $\gamma\colon S^1 \rightarrow S_g$ be an essential simple closed curve. Suppose first that $\gamma$ is non-separating. Then, the surface $S = S_g \backslash \gamma$ is connected and has boundary $S^1 \sqcup S^1$. The surface $\widehat{S}_g = S \cup_{S^1 \sqcup S^1} S$ obtained by identifying two copies of $S$ together along their boundary forms a degree--$2$ cover of $S_g$ with the covering map given by rotation about the handle formed by the gluing. The pre-image of $\gamma$ in $\widehat{S}_g$ is a bounding pair that separates $\widehat{S}_g$ into two subsurfaces each of Euler characteristic $\chi(S_g)$. 
  
  Suppose now that $\gamma$ separates $S_g$ into two subsurfaces $A$ and $B$. By Lemma~\ref{neumann}, there is a degree--$2$ cover $\widehat{A} \rightarrow A$ so that $\widehat{A}$ has two boundary components, which each cover $\gamma$ by degree one. Similarly, by Lemma~\ref{neumann}, there is a degree--$2$ cover $\widehat{B} \rightarrow B$ so that $\widehat{B}$ has two boundary components, which each cover $\gamma$ by degree one. Let $\widehat{S}_g = \widehat{A} \cup_{S^1 \sqcup S^1} \widehat{B}$. By Lemma~\ref{coverpaste}, $\widehat{S}_g$ forms a degree--$2$ cover of $S_g$. The pre-image of $\gamma$ in $\widehat{S}_g$ is a bounding pair that separates $\widehat{S}_g$ into two surfaces of Euler characteristic $2\chi(A)$ and $2\chi(B)$. 
 \end{proof}

\subsection{Surface group amalgams are virtual $3$--manifold groups and are abstractly commensurable to right-angled Coxeter groups}
\label{sec:AC_thm_proof}

  Each surface amalgam in $\cC$ is the fundamental group of a complex $X$ defined in Construction~\ref{surfmodel}, and each right-angled Coxeter group in $\cW$ is the orbifold fundamental group of an orbicomplex~$\cO$ defined in Construction~\ref{racgmodel}. To prove that each surface amalgam in $\cC$ is abstractly commensurable to a right-angled Coxeter group in $\cW$, for each complex $X$ we define an orbicomplex $\cO$, and
  we construct covers of $X$ and $\cO$ which are homotopy equivalent. Along the way, in Theorem~\ref{thm:surf_amal_3mancover}, we construct a cover of $X$ which is a deformation retract of a $3$--manifold with boundary, which can be constructed using similar arguments to those in \cite[Section~8]{kapovichkleiner}. The remaining covering maps and proof of commensurability are given in Theorem~\ref{3mancovers}. An alternative proof of Theorem~\ref{thm:surf_amal_3mancover} proved using Coxeter groups is given in Corollary~\ref{cor:surf_3man}.

  \begin{thm} \label{thm:surf_amal_3mancover}
   If $G \in \cC_{m,n}$, then $G$ has a finite-index subgroup that acts freely on a $3$--manifold.
  \end{thm}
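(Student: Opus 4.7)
The plan is to construct a finite cover $\widehat X$ of the 2--complex $X$ from Construction~\ref{surfmodel} (with $\pi_1(X) = G$) such that $\widehat X$ is a deformation retract of a compact 3--manifold $M$ with boundary. Then $\pi_1(\widehat X) = \pi_1(M)$ is a finite-index subgroup of $G$ acting freely on the universal cover of $M$.

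The central tool is Lemma~\ref{kmncover}: the degree-$mn$ cover $K_{m,n} \times S^1 \to A_{m,n}$ resolves the singular attaching of the annulus to the surfaces, in the sense that $a$ lifts to $n$ disjoint degree-$m$ circles, $b$ lifts to $m$ disjoint degree-$n$ circles, and each lifted copy of the annulus $A$ attaches by a homeomorphism on each side. I would extend this to a cover of all of $X$ by constructing compatible finite covers of $S_g$ and $S_h$.

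In the generic case where both $a$ and $b$ are nonseparating, the classes $[a]$ and $[b]$ are primitive elements of $H_1(S_g;\Z)$ and $H_1(S_h;\Z)$. I would then define a surjective homomorphism $\phi \colon G \to \Z/m \oplus \Z/n$ sending $[a] \mapsto (1,0)$ and $[b] \mapsto (0,1)$; this is well-defined because both $a^m$ and $b^n$ map to $(0,0)$. The associated degree-$mn$ cover $\widehat X \to X$ restricts to the cover of Lemma~\ref{kmncover} over $A_{m,n}$ (both being the regular abelian cover with deck group $\Z/m \oplus \Z/n$), and yields the desired covers $\widehat S_g \to S_g$ and $\widehat S_h \to S_h$. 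When $a$ or $b$ is separating, $[a]$ or $[b]$ vanishes in homology; in this case I would first invoke Lemma~\ref{boundingpair} to obtain a double cover of the surface in which the separating curve lifts to a bounding pair of nonseparating curves, extend this to a double cover of $X$ using Lemma~\ref{coverpaste}, and then reduce to the previous case.

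Finally, I would thicken $\widehat X$ to a compact 3--manifold with boundary. At each attaching circle $c$ in $\widehat S_g$, the local structure of $\widehat X$ is the surface $\widehat S_g$ passing through $c$ with $m$ annular pages attached along $c$, giving $m + 2$ half-planes meeting along a common edge. Such a configuration embeds in $\R^3$ by placing the pages at equally spaced angles around the edge, providing local 3--balls whose spine is $\widehat X$ near $c$; an analogous picture applies near each lifted copy of $b$. These local thickenings glue together along the annular pieces, each of which thickens to a solid cylinder. The main technical challenge is to ensure that the cyclic orderings of pages around each circle glue consistently into a global 3--manifold structure; this requires careful bookkeeping of the local framings and mirrors the construction in Kapovich--Kleiner~\cite[Section~8]{kapovichkleiner}.
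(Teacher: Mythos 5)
Your proposal is correct in outline and reaches the same endpoint as the paper (a finite cover of $X$ that is a spine of a compact $3$--manifold with boundary, thickened as in \cite[Section~8]{kapovichkleiner}), but the route to the cover is genuinely different in the main case. Where both curves are nonseparating, you use primitivity of $[a]$ and $[b]$ in homology to build a surjection $G \to \Z/m \oplus \Z/n$ and take the associated regular degree--$mn$ cover, whose restriction over $A_{m,n}$ has the forced structure of $n$ circles of degree $m$, $m$ circles of degree $n$, and $mn$ homeomorphically attached annuli; this is a clean, one-step construction. The paper instead \emph{always} first passes to the bounding-pair double cover of both surfaces (Lemma~\ref{boundingpair}), so that the curves cut the covers into subsurfaces with two boundary circles, and then uses Neumann's parity lemma (Lemma~\ref{neumann}) to build degree--$m$ and degree--$n$ covers of these pieces with prescribed boundary degrees, pasting them to copies of $K_{m,n}\times S^1$ by hand via Lemma~\ref{coverpaste}. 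What the paper's longer route buys is that the resulting cover $X_2$ consists entirely of compact surfaces \emph{with boundary} glued along branch circles, which is exactly the form needed both for the Kapovich--Kleiner thickening and for reuse in the commensurability proof of Theorem~\ref{3mancovers}; your cover instead contains closed surfaces passing through the branch circles, which still thickens (cut each closed surface along its branch circle, or use your $(m+2)$--page local model), but that extra observation should be made explicit, and there is no genuine global framing obstruction since you are free to choose untwisted product thickenings of orientable pieces. Two small points to tighten: (i) in the separating case, after the double cover the complex is no longer of the form of a complex for a group in $\cC_{m,n}$ (it has two branch circles on each side and two copies of $A_{m,n}$), so "reduce to the previous case" should be replaced by rerunning your homomorphism argument on this graph of spaces, which works because the two curves of a bounding pair are nonseparating and homologous, hence hit by a single map $H_1(\widehat{S}_g;\Z)\to\Z/m$; (ii) the parenthetical identification of your restricted cover with the explicit cover of Lemma~\ref{kmncover} is plausible but not needed---the degree computation on the singular circles already gives you everything the thickening requires.
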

  \begin{proof}
   Let $G \cong \pi_1(S_g) *_{\la a^m = b^n \ra} \pi_1(S_h) \in \mathcal{C}_{m,n}$. Let $X$ be the space given in Construction~\ref{surfmodel} with fundamental group $G$. We construct two finite covers $X_2 \xrightarrow{mn} X_1 \xrightarrow{2} X$ so that the space $X_2$ is a deformation retract of a $3$--manifold with boundary.
   
   Recall that the space $X$ consists of the surfaces $S_g$ and $S_h$, and suppose $a_0\colon S^1 \rightarrow S_g$ and $b_0\colon S^1 \rightarrow S_h$ are essential simple closed curves in the homotopy classes of $a \in \pi_1(S_g)$ and $b \in \pi_1(S_h)$, respectively. The space $X$ also contains the complex $A_{m,n} = S^1 \times I / \sim$ defined in Notation~\ref{hatcher}; the quotient of $S^1 \times \{0\}$ under the identification is glued to the curve $a_0$, and the quotient of $S^1 \times \{1\}$ under the identification is glued to the curve $b_0$. 
 
 We first describe the cover $X_1 \xrightarrow{2} X$; an illustration of the covering map appears in Figure~\ref{bounding_pair_cover}. The space $X_1$ contains a double cover of $S_g$, a double cover of $S_h$, and a double cover of $A_{m,n}$. By Lemma~\ref{boundingpair}, there is a double cover $\widehat{S}_g \rightarrow S_g$ so that the pre-image of $a_0$ in $\widehat{S}_g$ is a bounding pair (see Definition~\ref{bounding}) that separates $\widehat{S}_g$ into subsurfaces $S_1$ and $S_2$ of Euler characteristics $v_1$ and $v_2$. Similarly, by Lemma~\ref{boundingpair}, there is a double cover $\widehat{S}_h \rightarrow S_h$ so that the pre-image of $b_0$ in $\widehat{S}_h$ is a bounding pair that separates $\widehat{S}_h$ into subsurfaces $S_3$ and $S_4$ of Euler characteristics $v_3$ and $v_4$. Let $\widehat{A}_{m,n}$ be a degree--$2$ cover of $A_{m,n}$ that consists of two disjoint copies of $A_{m,n}$. Then the pre-image of $a_0$ in $\widehat{S}_g$ and the pre-image of $a_0$ in $\widehat{A}_{m,n}$ is $S^1 \sqcup S^1$ with covering maps given by homeomorphisms. Likewise, the pre-image of $b_0$ in $\widehat{S}_h$ and the pre-image of $b_0$ in $\widehat{A}_{m,n}$ is $S^1 \sqcup S^1$ with covering maps given by homeomorphisms. Thus, by Lemma~\ref{coverpaste}, the pre-image of $a_0$ in $\widehat{S}_g$ and in $\widehat{A}_{m,n}$ can be identified by a homeomorphism and the pre-image of $b_0$ in $\widehat{S}_h$ and in $\widehat{A}_{m,n}$ can be identified by a homeomorphism to form the space $X_1$ which forms a degree-$2$ cover of $X$.  
 
     \begin{figure}%[h]
      \begin{overpic}[scale=.5,  tics=5]{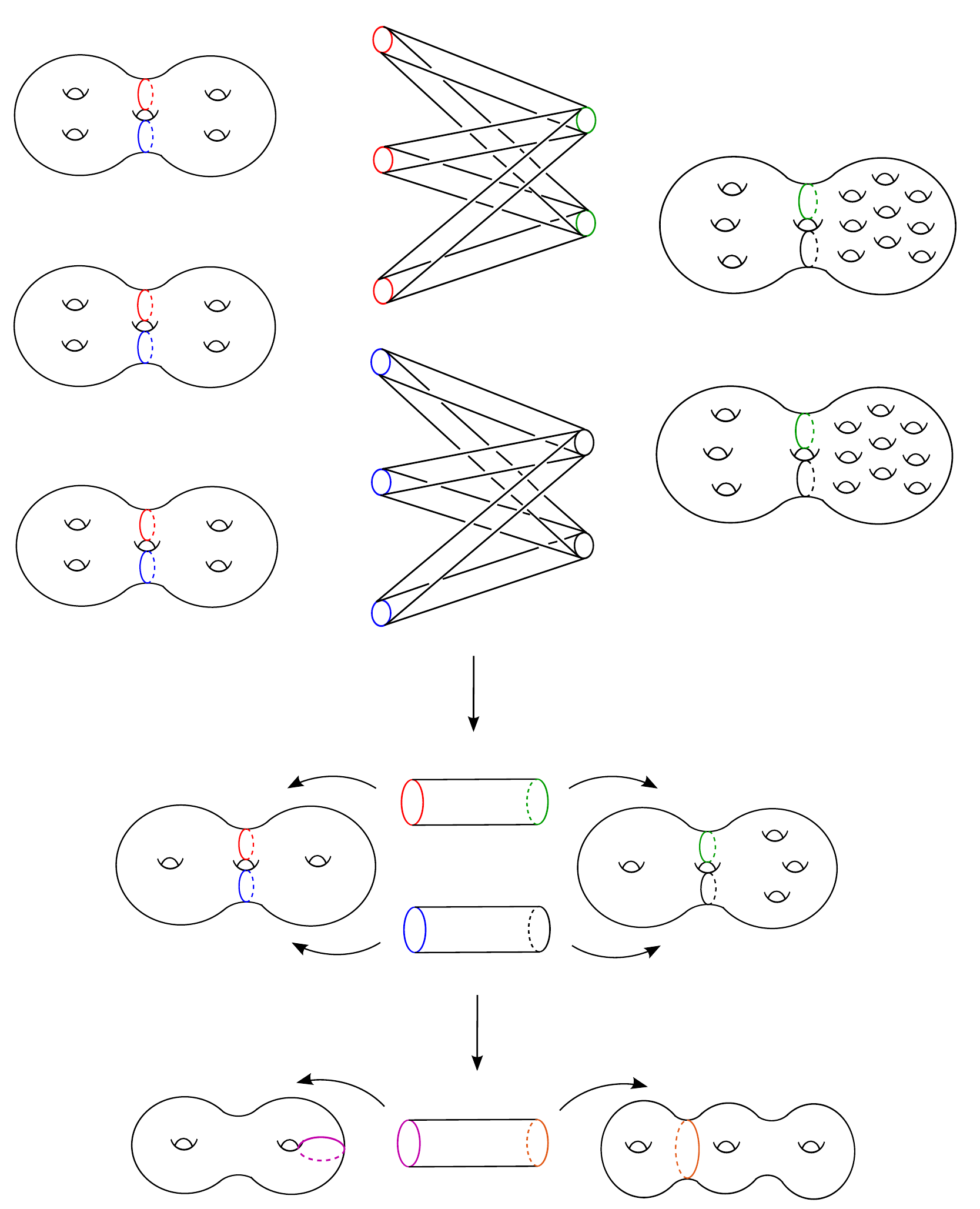}
      \put(1,5){$X$}
      \put(1,30){$X_1$}
      \put(65,95){$X_2$}
      \put(41,14){\Small{$2$}}
      \put(41,42){\Small{$6$}}
      \put(27,11){\Small{$2$}}
      \put(50,11){\Small{$3$}}
      \put(27,18){\Small{$2$}}
      \put(50,18){\Small{$3$}}
      \put(27,37){\Small{$2$}}
      \put(50,37){\Small{$3$}}
      \put(24.5,6.5){\small{$a_0$}}
      \put(56,-1){\small{$b_0$}}
      \put(28,96){\small{$c_1$}}
      \put(28,86){\small{$c_2$}}
      \put(28,76){\small{$c_3$}}
      \put(27.5,69){\small{$d_1$}}
      \put(27.5,59){\small{$d_2$}}
      \put(27.5,49){\small{$d_3$}}
      \put(50,89){\small{$e_1$}}
      \put(50,80){\small{$e_2$}}
      \put(50,63){\small{$f_1$}}
      \put(50,54){\small{$f_2$}}
      \put(13.5,92){\small{$c_1$}}
      \put(13.5,74.5){\small{$c_2$}}
      \put(13.5,56){\small{$c_3$}}
      \put(13.5,88){\small{$d_1$}}
      \put(13.5,70.5){\small{$d_2$}}
      \put(13.5,52){\small{$d_3$}}
      \put(63,83){\small{$e_1$}}
      \put(63,64){\small{$e_2$}}
      \put(63,78){\small{$f_1$}}
      \put(63,59.5){\small{$f_2$}}
      \end{overpic}
	\caption{{\small  A degree--$2$ cover $X_1 \rightarrow X$ and a degree--$6$ cover $X_2 \rightarrow X_1$. In $X$ and $X_1$, curves of the same color are glued by a degree--$2$ or degree--$3$ map of the circle as indicated. The curves $a_0$ and $b_0$ in $X$ each have pre-image in $X_1$ a {\it bounding pair}. The space $X_2$ consists of five surfaces and two copies of $K_{2,3} \times S^1$, and the colored curves are identified by a homeomorphism as indicated by the labeling.  }}
      \label{bounding_pair_cover}
     \end{figure}

 We next describe the cover $X_2 \xrightarrow{mn} X_1$, illustrated in Figure~\ref{bounding_pair_cover}. The space $X_2$ contains a degree--$mn$ cover of each of the surfaces $S_i$ defined in the last paragraph for $1 \leq i \leq 4$; we describe these covers first. For $i=1,2$ color one boundary component of $S_i$ red and color the other boundary component of $S_i$ blue. By Lemma~\ref{neumann}, for $i=1,2$, there is a degree--$m$ cover $\widetilde{S}_i \rightarrow S_i$ so that $\chi(\widetilde{S}_i) = mv_i$; the surface $\widetilde{S}_i$ contains two boundary curves, one colored red and one colored blue; and, the red boundary curve of $\widetilde{S}_i$ covers the red boundary curve of $S_i$ by degree $m$, and the blue boundary curve of $\widetilde{S}_i$ covers the blue boundary curve of $S_i$ by degree $m$. Similarly, for $i=3,4$, color one boundary curve of $S_i$ green and color the other boundary component of $S_i$ black. By Lemma~\ref{neumann}, for $i=3,4$, there is a degree--$n$ cover $\widetilde{S}_i \rightarrow S_i$ so that $\chi(\widetilde{S}_i) = nv_i$; the surface $\widetilde{S}_i$ contains two boundary curves, one colored green and one colored black; and, the green boundary curve of $\widetilde{S}_i$ covers the green boundary curve of $S_i$ by degree $n$, and the black boundary curve of $\widetilde{S}_i$ covers the black boundary curve of $S_i$ by degree $n$. For $i=1,2$, let $T_i$ be $n$ disjoint copies of $\widetilde{S}_i$, and for $i=3,4$, let $T_i$ be $m$ disjoint copies of $\widetilde{S}_i$.  Then, for $i \in \{1,2,3,4\}$, the (typically disconnected) surface $T_i$ forms a degree--$mn$ cover of $S_i$.  
 
 The space $X_2$ also contains a degree--$mn$ cover of each of the two copies of $A_{m,n}$ in $X_1$, given as follows. Let $p\colon K_{m,n} \times S^1 \rightarrow A_{m,n}$ be the degree--$mn$ cover from Lemma~\ref{kmncover}. If $v_m \in K_{m,n}$ and $v_n \in K_{m,n}$ denote vertices of valance $m$ and $n$, respectively, then $p$ restricts to a degree--$m$ cover on $v_m \times S^1$ over the quotient of $S^1 \times \{0\} \subset A_{m,n}$ and restricts to a degree--$n$ cover on $v_n \times S^1$ over the quotient of $S^1 \times \{1\} \subset A_{m,n}$. Let $K_1$ and $K_2$ denote two copies of $K_{m,n} \times S^1$.
 
 The space $X_2$ is obtained by identifying the spaces $\{K_1, K_2, T_1, \ldots, T_4\}$ in the following way. The space $K_1$ contains $n$ circles of the form $v_m \times S^1$, where $v_m \in K_{m,n}$ is a vertex of valance $m$. This subspace of $K_1$ is homeomorphic to $\bigsqcup_{i=1}^n S^1$. For $i=1,2$, the set of $n$ red circles in $T_i$ is also homeomorphic to $\bigsqcup_{i=1}^n S^1$. Identify these three subspaces homeomorphic to $\bigsqcup_{i=1}^n S^1$ by a homeomorphism. Similarly, the space $K_1$ contains $m$ circles of the form $v_n \times S^1$, where $v_n \in K_{m,n}$ is a vertex of valance $n$. This subspace of $K_1$ is homeomorphic to $\bigsqcup_{i=1}^m S^1$. For $i=3,4$, the set of green curves in $T_i$ is also homeomorphic to $\bigsqcup_{i=1}^m S^1$. Identify these three subspaces homeomorphic to $\bigsqcup_{i=1}^m S^1$ by a homeomorphism. Analogously, the space $K_2$ contains $n$ circles of the form $v_m \times S^1$, where $v_m \in K_{m,n}$ is a vertex of valance $m$. This subspace of $K_2$ is homeomorphic to $\bigsqcup_{i=1}^n S^1$. For $i=1,2$, the set of $n$ blue circles in $T_i$ is also homeomorphic to $\bigsqcup_{i=1}^n S^1$. Identify these three subspaces homeomorphic to $\bigsqcup_{i=1}^n S^1$ by a homeomorphism. Similarly, the space $K_2$ contains $m$ circles of the form $v_n \times S^1$, where $v_n \in K_{m,n}$ is a vertex of valance $n$. This subspace of $K_2$ is homeomorphic to $\bigsqcup_{i=1}^m S^1$. For $i=3,4$, the set of black curves in $T_i$ is also homeomorphic to $\bigsqcup_{i=1}^m S^1$. Identify these three subspaces homeomorphic to $\bigsqcup_{i=1}^m S^1$ by a homeomorphism to form $X_2$. By Lemma~\ref{coverpaste}, $X_2$ forms a degree--$mn$ cover of~$X_1$. 
 
 The space $X_2$ has a structure similar to the graph of spaces described by Kapovich--Kleiner in \cite[Section~8]{kapovichkleiner}. Roughly speaking, they build a $3$--manifold with boundary which deformation retracts to $X_2$ by replacing each branching curve with a solid torus, taking the product of each surface with boundary with an interval, and gluing the boundary annuli of the thickened surfaces to annuli on the solid tori.  
 Thus, the thickening of the space $X_2$ is a deformation retraction of a $3$--manifold with boundary. Therefore, $G$ has a finite-index subgroup which acts freely on a $3$--manifold.  \end{proof}

 \begin{thm} \label{3mancovers}
If $G \in \mathcal{C}_{m,n}$, then $G$ is abstractly commensurable to a right-angled Coxeter group with a generalized $\Theta$--graph nerve. More specifically, suppose $G \cong \pi_1(S_g) *_{\la a^m = b^n \ra} \pi_1(S_h) \in \mathcal{C}_{m,n}$. Then, $G$ is abstractly commensurable to the right-angled Coxeter group $W$ whose nerve is the generalized $\Theta$--graph with Euler characteristic vector
\[
   w = (\!\!\!\!\!\!\!\!\!\underbrace{0, \ldots, 0}_\text{$ 2(mn-m-n+1)$}\!\!\!\!\!\!\!\!\! ,  \underbrace{mv_1, \ldots, mv_1}_\text{\ \ \ \ $n$ times}, \underbrace{mv_2, \ldots, mv_2}_\text{$n$ times}, \underbrace{nv_3, \ldots, nv_3}_\text{$m$ times}, \underbrace{nv_4, \ldots, nv_4}_\text{$m$ times}   ),
\]
 where if $a \in \pi_1(S_g)$ is the homotopy class of the curve $a_0$ and $a_0$ is non-separating, define $v_1 = v_2 = \chi(S_g)$, and if $a_0$ separates $S_g$ into two subsurfaces $A$ and $B$ with $\chi(A) \leq \chi(B)$, define $v_1 = 2\chi(A)$ and $v_2 = 2\chi(B)$. Define $v_3$ and $v_4$ analogously. 
 \end{thm}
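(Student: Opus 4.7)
The plan is to construct finite covers of the surface amalgam complex $X$ (with $\pi_1(X)=G$, from Construction~\ref{surfmodel}) and of the reflection orbi-complex $\mathcal{O}_\Theta$ (with $\pi_1^{\mathrm{orb}}(\mathcal{O}_\Theta)=W$, from Construction~\ref{racgmodel}) that are homotopy equivalent. Since both spaces are aspherical, a homotopy equivalence at cover level gives an isomorphism between finite-index subgroups of $G$ and $W$, establishing the commensurability.

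By Lemma~\ref{degree16}, the degree-$16$ cover $Z\to\mathcal{O}_\Theta$ is a \emph{book of surfaces}: $2mn+2$ surfaces, each with two boundary circles, all glued along two common curves $C_1,C_2$, with Euler characteristics equal to $16$ times the entries of the vector $w$. By Theorem~\ref{thm:surf_amal_3mancover}, we have the degree-$2mn$ cover $X_2\to X$ consisting of $2m+2n$ surface pieces $\widetilde{S}_i$ of Euler characteristics $mv_j$ and $nv_j$ (in the prescribed multiplicities), together with two copies of $K_{m,n}\times S^1$, glued along many branching circles. A direct Euler characteristic computation gives $\chi(Z)=16\,\chi(X_2)$, so the plan is to construct a further degree-$16$ cover $Y\to X_2$ and verify $Y\simeq Z$.

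To build $Y$, I choose compatible degree-$16$ covers piece by piece. On each surface piece $\widetilde{S}_i$, Lemma~\ref{neumann} yields a connected degree-$16$ cover with two boundary components, each mapping to a boundary circle of $\widetilde{S}_i$ by degree $16$; this gives a component surface of Euler characteristic $16\,\chi(\widetilde{S}_i)$, matching the corresponding entries of $w$. On each $K_{m,n}\times S^1$ piece, I choose a degree-$16$ cover in which the preimage of every vertex circle $v\times S^1$ is a single circle covering by degree $16$. Lemma~\ref{coverpaste} then pastes these covers along matching boundary circles to produce the connected cover $Y\to X_2$.

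The main obstacle is arranging the covers of the two $K_{m,n}\times S^1$ pieces so that, after a deformation retract compatible with the gluings, they contribute exactly $2(m-1)(n-1)$ annular components to $Y$, each spanning the two eventual binding circles of the book-of-surfaces structure. This uses that $K_{m,n}$ has first Betti number $(m-1)(n-1)$: collapsing a spanning tree reduces $K_{m,n}$ to a wedge of $(m-1)(n-1)$ loops, so the product with $S^1$ deformation retracts onto a union of $(m-1)(n-1)$ annuli meeting at a common circle, and the chosen cover can be arranged so that these annular pieces are distributed appropriately between the two binding circles in $Y$. Once this structural match is in place, both $Y$ and $Z$ are $K(\pi,1)$ complexes built from the same collection of surface components glued along two common circles, hence $Y\simeq Z$ and $\pi_1(Y)\cong\pi_1(Z)$ gives a common finite-index subgroup of $G$ and $W$.
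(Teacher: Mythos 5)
Your overall strategy is the same as the paper's (common finite covers of the complex $X$ and the orbi-complex, built from Lemma~\ref{degree16}, the covers $X_2 \to X_1 \to X$ of Theorem~\ref{thm:surf_amal_3mancover}, Lemma~\ref{neumann}, Lemma~\ref{coverpaste}, and the collapse of a spanning tree in $K_{m,n}$), but the final step contains a genuine gap: the claim that the degree--$16$ cover $Y \to X_2$ ``can be arranged so that these annular pieces are distributed appropriately between the two binding circles'' is not just unjustified, it is false. Each copy of $K_{m,n}\times S^1$ in $X_2$ meets the rest of $X_2$ only along its own vertex circles, and any cover of it (in particular one where every vertex circle has connected preimage) still meets the rest of $Y$ only along circles lying in that single piece. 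After collapsing a maximal tree, such a piece becomes a rose of $(m-1)(n-1)$ loops times $S^1$: its Euler-characteristic-zero pieces are tori (annuli with \emph{both} ends on the \emph{same} binding circle), one binding circle per $K$--piece. By contrast, in $Z$ the $2(mn-m-n+1)$ annuli each span the two distinct curves $C_1$ and $C_2$. These two configurations are not homotopy equivalent. One way to see this precisely: for $m\geq 2$ the group $G$ is hyperbolic relative to $\presentation{a,b}{a^m=b^n}$, so any finite cover of $X_2$ of your form has at least two conjugacy classes of peripheral subgroups (one for each $K$--piece, with groups that are finite covers of $F_{(m-1)(n-1)}\times\Z$), whereas in $\pi_1(Z)$ the preimage of the linear part is the connected union $C_1\cup C_2\cup(\text{annuli})$, giving exactly one conjugacy class of peripheral subgroups (commensurable to $F_{2(m-1)(n-1)-1}\times\Z$). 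Since these peripheral structures are canonical, $\pi_1(Y)\not\cong\pi_1(Z)$, so no choice of degree--$16$ cover of $X_2$ can be homotopy equivalent to $Z$. The Euler characteristic count $\chi(Z)=16\chi(X_2)$ is necessary but nowhere near sufficient.

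The paper repairs exactly this mismatch by inserting intermediate steps on both sides before matching: after the tree-collapse homotopy equivalence $X_3\simeq X_2$ (which produces tori attached to two singular curves), it takes a degree--$2$ cover $X_4\to X_3$ that cuts each torus along a meridian, converting the tori into annuli spanning pairs among \emph{four} singular curves, and takes a corresponding degree--$2$ cover $Z_2\to Z_1$ of the book of surfaces so that its annuli and surfaces are likewise rearranged among four singular curves; only then does a final degree--$16$ cover $X_5\to X_4$ (via Lemma~\ref{neumann} and Lemma~\ref{coverpaste}) produce a space homeomorphic to $Z_2$. To complete your argument you would need to add such a step (or some other mechanism converting ``tori attached along one curve'' into ``annuli spanning two curves''); as written, the concluding identification $Y\simeq Z$ cannot hold.
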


 \begin{proof}
 Let $G \cong \pi_1(S_g) *_{\la a^m = b^n \ra} \pi_1(S_h) \in \mathcal{C}_{m,n}$ and let $W$ be the right-angled Coxeter group given in the statement of the theorem. Let $N =mn-m-n+1$. Let $\mathcal{O}$ be the orbi-complex given by Construction~\ref{racgmodel} with orbifold fundamental group $W$, and let $X$ be the space given by Construction~\ref{surfmodel} with fundamental group $G$.
 To prove $G$ and $W$ are abstractly commensurable,
 we construct two finite towers of maps $Z_2 \xrightarrow{2} Z_1 \xrightarrow{16} \mathcal{O}$ and $X_5 \xrightarrow{16} X_4 \xrightarrow{2} X_3 \xrightarrow{\simeq} X_2 \xrightarrow{mn} X_1 \xrightarrow{2} X$.  All spaces in these towers are connected $2$--complexes, the map $X_3 \rightarrow X_2$ is a homotopy equivalence, and each remaining map is a covering map with the degree specified above the arrow.  The $2$--complexes $Z_2$ and $X_5$ at the top of the towers are homeomorphic.  Since each map in the two towers is $\pi_1$--injective with finite index image, we get an isomorphism $\pi_1(Z_2) \cong \pi_1(X_5)$ between finite index subgroups of $G$ and $W$ as desired.
 
 We first describe the covering maps $Z_2 \xrightarrow{2} Z_1 \xrightarrow{16} \mathcal{O}$. Let $Z_1 \xrightarrow{16} \mathcal{O}$ be the cover given by Lemma~\ref{degree16}. An example of $Z_1$ appears on the lower left side Figure~\ref{2foldcovers}. By construction, $Z_1$ has two singular curves $C_1$ and $C_2$. There is a set $\mathcal{A}$ of $2N$ annuli in $Z_1$ and each annulus in $\mathcal{A}$ has one boundary curve glued to $C_1$ and one boundary curve glued to $C_2$. In addition, $Z_1$ has a set $\mathcal{B}$ of $2(m+n)$ surfaces with negative Euler characteristic and two boundary components. Each surface in $\mathcal{B}$ has one boundary curve glued to $C_1$ and one boundary curve glued to $C_2$. In particular, the set of surfaces $\mathcal{B}$ consists of $n$ surfaces with Euler characteristic $16mv_1$,  $n$ surfaces with Euler characteristic $16mv_2$, $m$ surfaces with Euler characteristic $16nv_3$, and  $m$ surfaces with Euler characteristic $16nv_4$. Let $Z_2$ be the following space; an example of $Z_2$ appears at the top of Figure~\ref{2foldcovers}. The space $Z_2$ contains four singular curves $D_1, D_2, D_3, D_4$, two copies of the set $\mathcal{A}$ denoted $A$ and $A'$, and two copies of the set $\mathcal{B}$ denoted $B$ and $B'$. Attach to the curves  $\{D_1, D_2\}$ each annulus in $A$ so that each annulus has one boundary curve glued to $D_1$ and the other boundary curve glued to $D_2$. Attach to the curves  $\{D_3, D_4\}$ each annulus in $A'$ so that each annulus has one boundary curve glued to $D_3$ and the other boundary curve glued to $D_4$. Similarly, attach to the curves  $\{D_1, D_4\}$ each surface in the set $B$ so that each surface has one boundary curve glued to $D_1$ and one boundary curve glued to $D_4$. Finally, attach to the curves  $\{D_2, D_3\}$ each surface in the set $B'$ so that each surface has one boundary curve glued to $D_2$ and one boundary curve glued to $D_3$. Then, $Z_2$ forms a degree-$2$ cover of $Z_1$ with the covering map given by rotation as pictured in Figure~\ref{2foldcovers}.

     \begin{figure}%[h]
      \begin{overpic}[scale=.6,  tics=5]{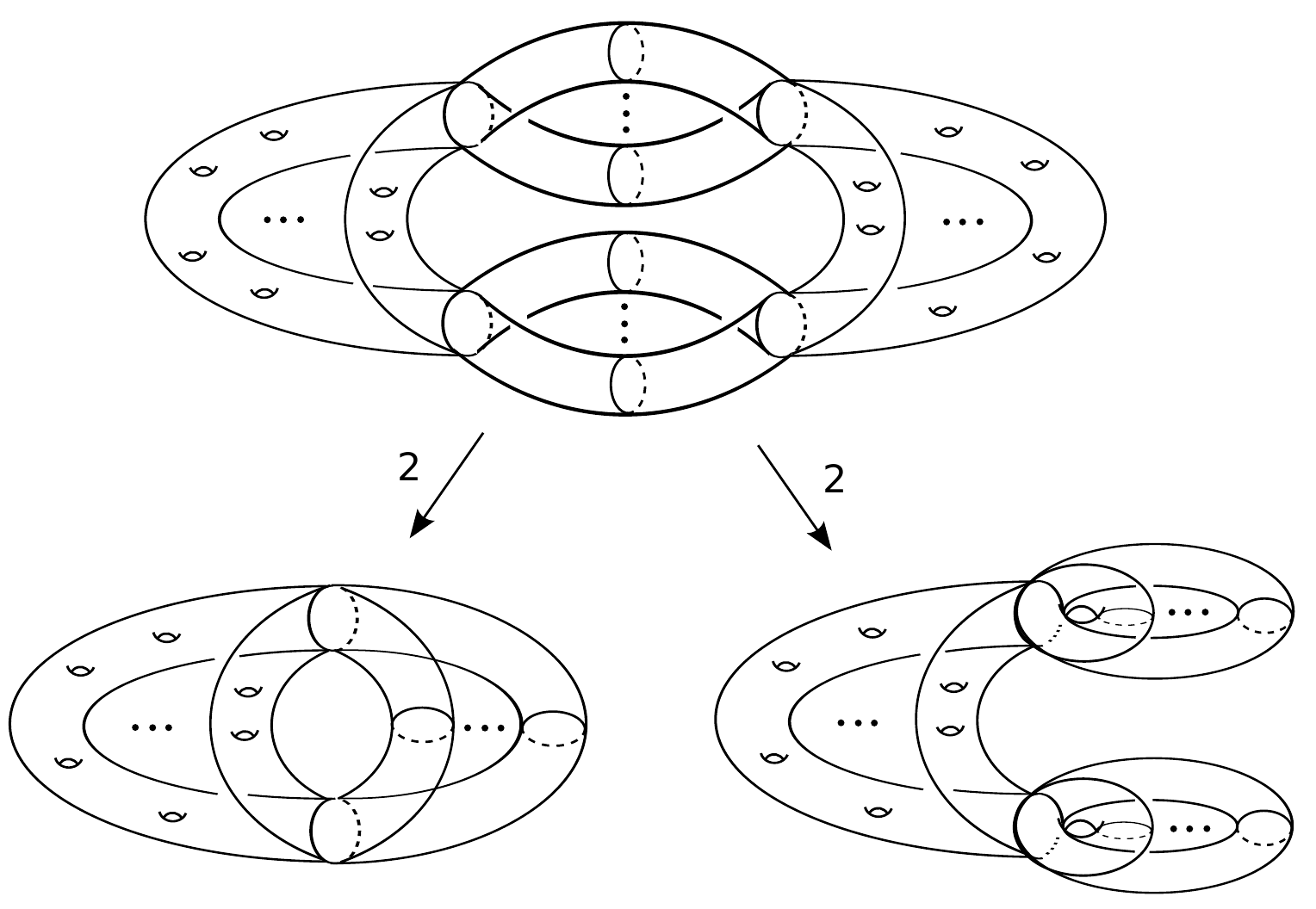}
        \put(43,59.4){\Small{$2N$}}
        \put(43,43.6){\Small{$2N$}}
        \put(36,2){\Small{$2N$}}
        \put(97,17){\Small{$N$}}
        \put(97,0){\Small{$N$}}
        \put(102,25){\small{$X_3$}}
        \put(87,63){\small{$X_4$}}
        \put(-2,25){\small{$Z_1$}}
        \put(5,63){\small{$Z_2$}}
      \end{overpic}
	\caption{{\small Illustrated above are two degree--$2$ covers given by rotation about an axis positioned through the space above. On the top and on the bottom-left, there are collections of $2N$ annuli connecting singular curves; on the bottom-right there are two collections of $N$ tori identified to the singular curves. }}
      \label{2foldcovers}
     \end{figure}
 
 We now describe the maps $X_5 \xrightarrow{16} X_4 \xrightarrow{2} X_3 \xrightarrow{\simeq} X_2 \xrightarrow{mn} X_1 \xrightarrow{2} X$. The space $X$ and the covers $X_2 \xrightarrow{mn} X_1 \xrightarrow{2} X$ are described above in the proof of Theorem~\ref{thm:surf_amal_3mancover}. 
 
 The homotopy equivalence $X_3 \rightarrow X_2$ is given as follows.  The construction is based on the well-known fact that any quotient of a CW-complex formed by collapsing a contractible subcomplex to a point is a homotopy equivalence \cite[Proposition~0.17]{hatcher}.
 In particular the quotient $q\colon K_{m,n} \to K_{m,n} / T_0$ is a homotopy equivalence, where $T_0$ is a maximal subtree of $K_{m,n}$.  The quotient is homeomorphic to the rose $R_N$ on $N$ petals, where $N = mn-m-n+1$.
 It follows that the quotient map $\bar{q}\colon K_{m,n} \times S^1 \rightarrow R_N \times S^1$ collapsing each subspace $T_0 \times \{x\}$ to a point is a homotopy equivalence.
 Let $X_3$ be the quotient space $X_2 / {\sim}$ in which the identification described above is performed separately on each of the two disjoint copies of $K_{m,n} \times S^1$ in $X_2$.
 The quotient map $X_2 \rightarrow X_3$ is again a homotopy equivalence by the homotopy extension property.  Indeed the homotopy extension argument is nearly identical to the proof of  \cite[Proposition~0.17]{hatcher}.  The desired map $X_3 \rightarrow X_2$ is a homotopy inverse of $X_2 \rightarrow X_3$.
 
 Let us take a moment to describe the structure of the quotient space $X_3$ produced above and illustrated in the lower right portion of Figure~\ref{2foldcovers}.  The space $X_3$  contains two singular curves $C$ and $C'$: all red and green curves in $X_2$ are identified, and all blue and black curves in $X_2$ are identified. In $X_3$ there are $N$ tori identified to $C$ along a simple closed curve on each torus, similarly, there are $N$ tori identified to $C'$ along a simple closed curve on each torus. In addition, there is a set $\mathcal{T}$ of $2(m+n)$ surfaces with negative Euler characteristic and two boundary components identified to $C$ and $C'$. In the set $\mathcal{T}$, there are $n$ surfaces with Euler characteristic $mv_1$, $n$ surfaces with Euler characteristic $mv_2$, $m$ surfaces with Euler characteristic $nv_3$, and $m$ surfaces with Euler characteristic $mv_4$.

 There is a degree--$2$ cover $X_4 \rightarrow X_3$ given as follows; an example of the space $X_4$ is pictured on the top of Figure~\ref{2foldcovers}, with the covering map given on the right-hand side. The space $X_4$ contains four singular curves $E_1, E_2, E_3, E_4$, sets $\mathcal{N}$ and $\mathcal{N}'$, which are each homeomorphic to $2N$ annuli, and, sets $T$ and $T'$, which are each homeomorphic to $\mathcal{T}$ described in the previous paragraph. The gluings are given as follows. Attach to the curves $\{E_1, E_2\}$ each annulus in $\mathcal{N}$ so that each annulus has one boundary component glued to the curve $E_1$ and one boundary component glued to the curve $E_2$. Attach to the curves $\{E_3, E_4\}$ each annulus in $\mathcal{N}'$ so that each annulus has one boundary component glued to $E_3$ and one boundary component glued to $E_4$. Similarly, attach to the curves $\{E_1, E_4\}$ each surface in the set $T$ so that each surface has one boundary component glued to $E_1$ and one boundary component glued to $E_4$. Finally, attach to the curves $\{E_2, E_3\}$ each surface in the set $T'$ so that each surface has one boundary component glued to $E_2$ and one boundary component glued to $E_3$. Then $X_4$ forms a degree--$2$ cover of $X_3$ with the covering map given by rotation about an axis that passes vertically through the two sets of annuli shown in Figure~\ref{2foldcovers} and has $N$ annuli from each set on each side. Alternatively, this covering map may be seen by cutting each torus in $X_3$ along a meridian curve parallel to the singular curve, taking two copies of the resulting space, and re-gluing the boundary components in pairs.

 The space $X_4$ has the same underlying structure as the space $Z_2$, but the Euler characteristic of the surfaces in $X_4$ are less than those in $Z_2$ by a factor of $16$; so, construct one final cover $X_5 \rightarrow X_4$ of degree $16$ so that $X_5 \cong Z_2$. By Lemma~\ref{neumann}, there is a set of surfaces $\widetilde{\mathcal{T}}$ that consists of a degree--$16$ cover $\widetilde{S}$ of each $S \in \mathcal{T}$ so that $\widetilde{S}$ has two boundary components and $\chi(\widetilde{S}) = 16mv_i$ for $i=1,2$ or $\chi(\widetilde{S}) = 16nv_j$ for $j=3,4$. Let $X_5$ be the space with the same underlying structure as $X_4$ and formed as follows. The space $X_5$ contains four singular curves $E_1', E_2', E_3', E_4'$, the sets of annuli $\mathcal{N}$ and $\mathcal{N}'$, and spaces $\widetilde{T}$ and $\widetilde{T}'$, each homeomorphic to $\widetilde{\mathcal{T}}$.  The gluings are given as follows. Attach to the curves $\{E_1', E_2'\}$ each annulus in $\mathcal{N}$ so that each annulus has one boundary component glued to the curve $E_1'$ and one boundary component glued to the curve $E_2'$. Attach to the curves $\{E_3', E_4'\}$ each annulus in $\mathcal{N}'$ so that each annulus has one boundary component glued to $E_3'$ and one boundary component glued to $E_4'$. Similarly, attach to the curves $\{E_1', E_4'\}$ each surface in the set $\widetilde{T}$ so that each surface has one boundary component glued to $E_1'$ and one boundary component glued to $E_4'$. Finally, attach to the curves $\{E_2', E_3'\}$ each surface in the set $\widetilde{T}'$ so that each surface has one boundary component glued to $E_2'$ and one boundary component glued to $E_3'$. Then, each surface with boundary in the subspaces $\mathcal{N}$, $\mathcal{N}'$, $\widetilde{T}$, and $\widetilde{T}'$ of $X_5$ covers a corresponding surface in $X_4$ so that the degree restricted to the boundary components is equal to 16. Thus, by Lemma~\ref{coverpaste}, the space $X_5$ forms a degree--$16$ cover of $X_4$. By construction, $X_5$ and $Z_2$ are homeomorphic. Therefore, $G$ and $W$ are abstractly commensurable. 
 \end{proof}

\subsection{Additional commensurabilities} \label{sec:add_comm}

Two vectors $v,w \in \Z^n$ are {\it commensurable} if there exist non-zero integers $K$ and $L$ so that $Kv=Lw$. The following lemma, which generalizes a technique of Crisp--Paoluzzi \cite{crisppaoluzzi} and is illustrated in the left of Figure~\ref{orbicovers}, implies that if two right-angled Coxeter groups in $\cW$ have commensurable Euler characteristic vectors, then they are abstractly commensurable.

\begin{figure}
\centering
\includegraphics[scale=.5]{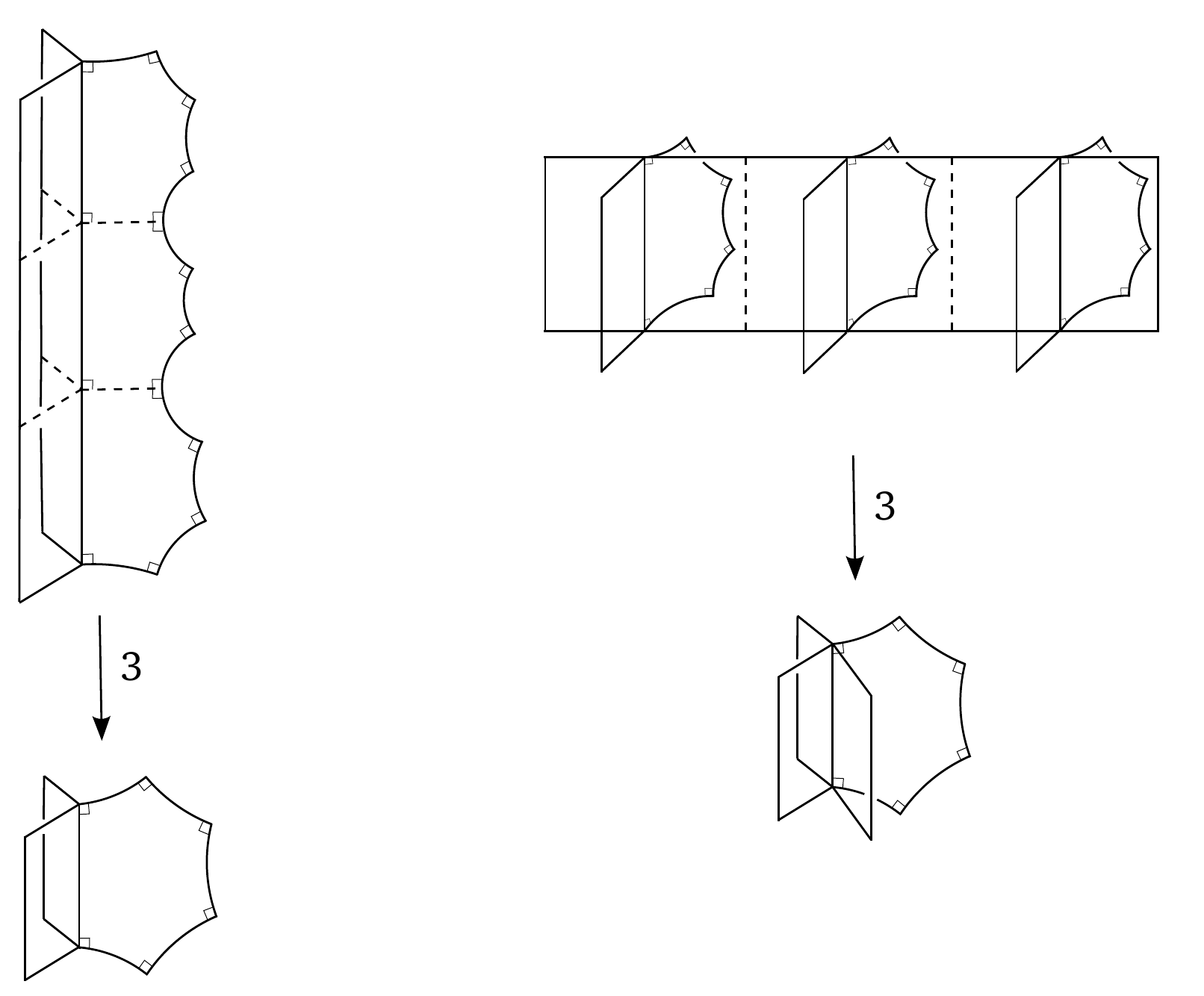}
\caption{{\small Illustrated above are two degree--$3$ covers of an orbi-complex with fundamental group a right-angled Coxeter group. The covers are given locally by reflection along the dashed lines. The orbifold fundamental group of each covering space is a right-angled Coxeter group with generalized $\Theta$--graph nerve.}}
\label{orbicovers}
\end{figure}

\begin{lem} \label{cover}
 Let $W_{\Theta}$ be the right-angled Coxeter group whose nerve is the generalized $\Theta$--graph with Euler characteristic vector
 \[
   w = (\underbrace{0,\ldots, 0}_\text{$\ell$}, \chi_{\ell+1}, \ldots, \chi_{\ell+h}  ).
\]
Then $W_{\Theta}$ is abstractly commensurable with the right-angled Coxeter group $W$ whose nerve is the generalized $\Theta$--graph with Euler characteristic vector $Kw$ for each integer $K>0$.
\end{lem}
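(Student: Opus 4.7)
The plan is to exhibit $W$ as a finite-index subgroup of $W_\Theta$ by constructing a $K$-fold orbifold covering $\mathcal{O}' \to \mathcal{O}_\Theta$ whose orbifold fundamental group is isomorphic to $W$. This strategy generalizes the explicit unfolding construction of Crisp--Paoluzzi \cite{crisppaoluzzi} and is the content suggested by the illustrations in Figure~\ref{orbicovers}.

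Recall from Construction~\ref{racgmodel} that $\mathcal{O}_\Theta$ is a union of right-angled reflection orbifolds $P_1, \ldots, P_k$, where each $P_i$ is a polygon with $n_i+2$ consecutive reflection edges and a single non-reflection edge, glued together along their non-reflection edges to form a single branching edge. For each $i$, I would build a $K$-fold orbifold cover $P_i' \to P_i$ by taking $K$ copies of $P_i$ and reflecting them successively across suitably chosen reflection edges, so that $P_i'$ is again a right-angled reflection polygon with a single non-reflection edge and some number $n_i' + 2$ of reflection edges. Matching Euler characteristics---requiring $\chi(P_i') = (1-n_i')/4$ to equal $K\chi_i = K(1-n_i)/4$---forces $n_i' = K(n_i-1) + 1$. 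In particular, linear orbifolds ($n_i = 1$) lift to linear orbifolds, while hyperbolic orbifolds lift to hyperbolic orbifolds of larger degree, so both the linear degree $\ell$ and the hyperbolic degree $h$ of the defining graph are preserved.

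Once each cover $P_i' \to P_i$ is arranged to restrict to a common $K$-fold cover of the non-reflection edge, these individual covers can be assembled via Lemma~\ref{coverpaste} into an orbicomplex $\mathcal{O}'$ equipped with a $K$-fold covering map $\mathcal{O}' \to \mathcal{O}_\Theta$. By construction, $\mathcal{O}'$ coincides with the reflection orbicomplex of Construction~\ref{racgmodel} associated to $\Theta' = \Theta(n_1', \ldots, n_k')$, so its orbifold fundamental group is $W_{\Theta'}$. Since the Euler characteristic vector of $W_{\Theta'}$ is precisely $Kw$, the hypothesis gives $W_{\Theta'} \cong W$, and $W$ therefore embeds as a finite-index subgroup of $W_\Theta$, yielding the desired commensurability.

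The principal technical obstacle is constructing the $K$-fold covers $P_i' \to P_i$ uniformly across $i$, so that their restrictions to the non-reflection edges all agree with a single $K$-fold cover of the branching edge; this compatibility is what enables the pasting lemma to produce the global cover. Once this is arranged, the identification of $\mathcal{O}'$ with $\mathcal{O}_{\Theta'}$ and the Euler characteristic verification are essentially bookkeeping.
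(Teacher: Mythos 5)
Your proposal is correct and takes essentially the same route as the paper: the paper likewise builds a degree--$K$ orbifold cover of $\mathcal{O}_\Theta$ by unfolding across the reflection edges adjacent to the branching edge (it glues $K$ copies of the whole orbicomplex in a chain so that a single branching edge survives), which is exactly your piecewise construction of the covers $P_i' \to P_i$ assembled over a common $K$--fold cover of the branching edge. The compatibility you flag is arranged just as you suggest---unfold every $P_i$ alternately starting at the same essential vertex---and the identification of the resulting nerve via multiplicativity of the orbifold Euler characteristic matches the paper's conclusion.
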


\begin{proof}
 Let $\mathcal{O}_{\Theta}$ be the orbi-complex described in Construction~\ref{racgmodel} with orbifold fundamental group $W_{\Theta}$. We show that $\mathcal{O}_{\Theta}$ has a $K$--fold cover $\mathcal{O}$ with orbifold fundamental group $W$. To construct the cover, take $K$ copies of the orbi-complex $\mathcal{O}_{\Theta}$. Glue branching reflection edges incident to the a vertex in one copy to branching reflection edges incident to a vertex in another copy so that orbifolds with the same number of sides are glued together and so that there is one branching edge in the cover $\mathcal{O}$. The orbifold fundamental group $W$ of $\mathcal{O}$ is a right-angled Coxeter group with generalized $\Theta$--graph nerve and linear degree $\ell$. Since Euler characteristic is multiplicative under covering maps, $W$ has Euler characteristic vector $Kw$ as desired.
\end{proof}

\begin{cor}
 If groups $W, W' \in \cW$ have commensurable Euler characteristic vectors, then $W$ and $W'$ are abstractly commensurable.  
\end{cor}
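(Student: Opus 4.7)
The plan is to chain together Lemma~\ref{cover} applied to each of the two groups, using the common multiple given by commensurability to end up with the same Euler characteristic vector, which determines the group.

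More precisely, suppose $W$ has Euler characteristic vector $v$ and $W'$ has Euler characteristic vector $v'$, with $Kv = Lv'$ for non-zero integers $K, L$. After possibly multiplying both $K$ and $L$ by $-1$, I may assume $K, L > 0$. Observe that commensurability of $v$ and $v'$ forces the number of zero entries of $v$ and $v'$ to agree, so $W$ and $W'$ have the same linear degree; in particular $v$ and $v'$ have the same number of components $k$. Applying Lemma~\ref{cover} to $W$ with multiplier $K$, I get a right-angled Coxeter group $W_1 \in \cW$ with Euler characteristic vector $Kv$ that is abstractly commensurable with $W$. Similarly, applying Lemma~\ref{cover} to $W'$ with multiplier $L$, I get a group $W_2 \in \cW$ with Euler characteristic vector $Lv'$ that is abstractly commensurable with $W'$.

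Now $W_1$ and $W_2$ have the same Euler characteristic vector $Kv = Lv'$. But the Euler characteristic vector of a group in $\cW$ determines its nerve: if the entries are $(0,\ldots,0,\chi_{\ell+1},\ldots,\chi_{\ell+h})$, then the linear degree is the number of zero entries, and by Definition~\ref{eulercharvector} the degree $n_i$ of the $i$-th essential path is recovered from $\chi_i = (1-n_i)/4$. Hence $W_1 \cong W_2$. Since abstract commensurability is an equivalence relation, the chain $W \sim W_1 \cong W_2 \sim W'$ gives the desired conclusion.

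I do not anticipate any significant obstacle: the corollary is essentially a formal consequence of Lemma~\ref{cover} combined with the trivial observation that the Euler characteristic vector is a complete invariant of groups in $\cW$. The only mild care required is in checking that after sign normalization the integers $K, L$ produced by commensurability are positive, so that Lemma~\ref{cover} applies.
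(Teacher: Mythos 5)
Your proposal is correct and is essentially the paper's argument: the corollary is stated as an immediate consequence of Lemma~\ref{cover}, obtained exactly by rescaling both Euler characteristic vectors to the common vector $Kv = Lv'$ and noting that this vector determines the generalized $\Theta$--graph nerve (via $n_i = 1 - 4\chi_i$), hence the group. Your sign normalization and the observation about matching linear degrees are fine details that the paper leaves implicit.
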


The following lemma, illustrated in the right of Figure~\ref{orbicovers}, states that within each commensurability class in the non-hyperbolic setting, for a given group $W \in \cW$, there are right-angled Coxeter groups in $\cW$ commensurable to $W$ and with arbitrarily large hyperbolic degree. 

\begin{lem} \label{hypdegree}
Let $\Theta$ be the generalized $\Theta$--graph with Euler characteristic vector
\[
   (\underbrace{0,\ldots, 0}_\text{$\ell$}, \chi_{\ell+1}, \ldots, \chi_{\ell+h}  ).
\]
Then for all $m \geq 1$,  the group $W_{\Theta}$ is abstractly commensurable to the right-angled Coxeter group $W$ whose nerve is the generalized $\Theta$--graph with Euler characteristic vector
\[
   (\underbrace{0,\ldots, 0,}_\text{$m(\ell-2)+2$}
   \underbrace{\underbrace{\chi_{\ell+1}, \ldots, \chi_{\ell+h}}, \underbrace{ \chi_{\ell+1}, \ldots, \chi_{\ell+h}}, \ldots, \underbrace{\chi_{\ell+1}, \ldots, \chi_{\ell+h}}}_\text{$m$ times}).
\]
\end{lem}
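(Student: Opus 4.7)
The plan is to construct an $m$-fold orbifold cover of $\mathcal{O}_\Theta$ whose total space is orbifold-isomorphic to the orbi-complex $\mathcal{O}_W$ from Construction~\ref{racgmodel} for the target group $W$. This will realize $W$ as a finite-index subgroup of $W_\Theta$, establishing the desired commensurability. Since $\ell \geq 2$, I can pick two distinct linear flaps $P$ and $Q$ of $\mathcal{O}_\Theta$; by Construction~\ref{racgmodel}, each is a rectangle with one non-reflection (branching) edge and three reflection edges.

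The construction is suggested by the right-hand diagram in Figure~\ref{orbicovers} and is a selective refinement of the cover used in Lemma~\ref{cover}. Take $m$ copies $\mathcal{O}^{(1)}, \ldots, \mathcal{O}^{(m)}$ of $\mathcal{O}_\Theta$ and, as in Lemma~\ref{cover}, glue the $m$ copies of $P$ into a single merged flap $\widetilde{P}$ via identifications along the side reflection edges of $P$ across copies; likewise glue the $m$ copies of $Q$ into a single merged flap $\widetilde{Q}$. In contrast to Lemma~\ref{cover}, do \emph{not} identify the side edges of the remaining $\ell - 2$ linear flaps or of the $h$ hyperbolic flaps, so that these appear in the cover as $m$ disjoint copies each. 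Because the side reflection edges of $P$ and $Q$ are incident to the endpoints of the branching edge, and the top (non-reflection) edge of $P$ (respectively $Q$) in each copy coincides with the corresponding branching edge of $\mathcal{O}^{(i)}$, the chosen identifications force all $m$ branching edges to coalesce into a single branching edge in the resulting space $\widetilde{\mathcal{O}}$.

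Next I analyze $\widetilde{\mathcal{O}}$: it contains a single branching edge; the two merged linear flaps $\widetilde{P}$ and $\widetilde{Q}$, each with orbifold Euler characteristic $m \cdot 0 = 0$; the $m(\ell-2)$ disjoint linear flaps arising from the unmerged linear flaps (each with Euler characteristic $0$); and the $mh$ disjoint hyperbolic flaps, which preserve the Euler characteristics $\chi_{\ell+1}, \ldots, \chi_{\ell+h}$ and occur $m$ times each. This is precisely the orbi-complex $\mathcal{O}_W$ for the target group $W$ with Euler characteristic vector as stated in the lemma. Hence $\pi_1^{\mathrm{orb}}(\widetilde{\mathcal{O}}) \cong W$, and the covering $\widetilde{\mathcal{O}} \to \mathcal{O}_\Theta$ realizes $W$ as an index-$m$ subgroup of $W_\Theta$, yielding the commensurability.

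The main obstacle is verifying that this selective merging produces a well-defined $m$-fold orbifold cover of $\mathcal{O}_\Theta$. In Lemma~\ref{cover}, every flap is merged uniformly across all $K$ copies, which makes it straightforward to check the result is a cover. Here, merging only the two chosen flaps $P, Q$ while keeping the remaining flaps as $m$ disjoint copies requires careful bookkeeping of the orbifold structure near the two endpoints of the branching edge: one must confirm that the local orbifold groups at these vertices remain consistent under the identifications of the $P$ and $Q$ side edges, and that the covering is locally of the correct multiplicity on every orbifold chart (including those meeting the unmerged flaps).
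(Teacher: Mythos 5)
Your construction has a genuine gap, and it is exactly at the point you flagged. When you glue copy $i$'s and copy $(i+1)$'s flap $P$ along their side reflection edges, you identify the corresponding essential vertices, say $x_i = x_{i+1} =: \tilde{x}$, and $\tilde{x}$ maps to an essential vertex $x$ of $\mathcal{O}_\Theta$. The local group of $\mathcal{O}_\Theta$ at $x$ is a single $\Z/2$, generated by the reflection $a$ corresponding to that essential vertex of $\Theta$: the side reflection edges of \emph{all} $k$ flaps at $x$ lie in the mirror of this one reflection (this is visible from the Davis complex, where $x$ lifts to the midpoint of an $a$--edge with stabilizer $\langle a \rangle$). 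Consequently, in any orbifold covering of $\mathcal{O}_\Theta$, a preimage of $x$ has local group either $\Z/2$ --- in which case its neighborhood must look like the base, with all $k$ flaps contributing mirrored quarter-disks --- or trivial --- in which case \emph{every} flap must be unfolded there, so all $k$ flaps contribute half-disks glued along the full branching segment. At your point $\tilde{x}$ the picture is mixed: $\widetilde{P}$ and $\widetilde{Q}$ contribute half-disks (their fold lines are interior), while the $2(k-2)$ unmerged flaps of copies $i$ and $i+1$ contribute mirrored quarter-disks. Since $k \geq 3$, this matches neither admissible local model, so your map is not an orbifold covering, and the ``careful bookkeeping'' cannot be completed. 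A secondary problem is the identification of the resulting space with $\mathcal{O}_W$: the side-edge gluings concatenate the $m$ branching edges into a path of length $mL$ rather than coalescing them, and each unmerged flap is attached only along one length-$L$ subsegment, whereas in $\mathcal{O}_W$ every flap is attached along the entire branching edge; so even ignoring the covering issue the space is not the orbicomplex of Construction~\ref{racgmodel}, and its fundamental group is not obviously $W$.

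The paper's proof sidesteps both problems by performing the gluings away from the branching edges: it glues rectangular (linear) flaps of adjacent copies along their \emph{exterior} reflection edges (the mirror opposite the non-reflection edge), pairing different linear flaps at each copy (this uses $\ell \geq 2$). The resulting cover has $m$ separate branching edges, each still carrying a full set of flaps, and the covering condition is immediate: the map is trivial away from the glued exterior mirrors and a local reflection there. The doubled rectangles become bridges with two non-reflection edges attached to distinct branching lines, and only \emph{afterwards} are the $m$ branching edges merged into one --- not by a covering, but by a homotopy equivalence that collapses the bridge rectangles --- yielding the orbicomplex with linear flap count $m(\ell-2)+2$ and $m$ copies of each hyperbolic flap. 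If you want to salvage your approach, you should follow this two-step pattern: keep all identifications in the cover away from the branching edges, and do any merging of branching edges by a $\pi_1$--preserving collapse rather than inside the covering itself.
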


\begin{proof}
 Let $\mathcal{O}_{\Theta}$ be the orbi-complex described in Construction~\ref{racgmodel} with orbifold fundamental group $W_{\Theta}$. We show that $\mathcal{O}_{\Theta}$ has an $m$-fold cover $\mathcal{O}$ with orbifold fundamental group $W$. To construct the cover, take $m$ copies of the orbi-complex $\mathcal{O}_{\Theta}$ and glue exterior reflection edges of the rectangles to each other in pairs to form an orbi-complex $\mathcal{O}$ with $m$ branching edges and so that the branching edges lie along one rectangular orbifold as pictured in Figure~\ref{orbicovers}. The orbifold covering map is locally trivial away from the glued reflection edges, where the covering map is locally a reflection. The orbi-complex $\mathcal{O}$ has $m$ copies of each hyperbolic reflection orbifold in $\mathcal{O}_{\Theta}$ and $2(\ell-1)+(m-2)(\ell-2) = m(\ell-2)+2$ rectangular orbifolds with one boundary edge attached to the branching lines along their boundary edges. The orbi-complex $\mathcal{O}$ is homotopy equivalent to an orbi-complex with one branching line: collapse the rectangular orbifolds with two boundary lines that are attached by their boundary lines to distinct branching lines. Then, the fundamental group of this orbi-complex is the right-angled Coxeter group whose nerve is the generalized $\Theta$--graph with the desired Euler characteristic vector.
\end{proof}

%%%%%%%%%%%%%%%%%%
\section{$3$--manifold groups}
\label{sec:3man}
%%%%%%%%%%%%%%%%%%

  In Section~\ref{sec:coarse_separation} we state the results of Kapovich--Kleiner \cite{kapovichkleiner05} that we will need; in Section~\ref{subsec:3mantop} we review some background on $3$--manifolds; and, in Section~\ref{3manproof} we characterize which groups in $\cC$ are fundamental groups of $3$--manifolds, and we show each group in $\cW$ acts properly and cocompactly on a contractible $3$--manifold.

 \subsection{Coarse separation}\label{sec:coarse_separation}
 
 \begin{defn}
  A {\it metric simplicial complex} $X$ is the geometric realization of a connected simplicial complex, metrized so that each edge has length one. The complex $X$ is said to have {\it bounded geometry} if all links have a uniformly bounded number of simplices. A metric simplicial complex is {\it uniformly acyclic} if for every $R_1\in \R$ there exists $R_2\in \R$ such that for each subcomplex $K \subset X$ of diameter less than $R_1$ the inclusion $K \rightarrow N_{R_2}(K)$ induces zero on reduced homology groups. A component $C$ of $X \backslash N_R(K)$ is called {\it deep} if it is not contained within a finite neighborhood of $K$. A subcomplex~$K$ {\it coarsely separates} $X$ if there is an $R$ so that $X \backslash N_R(K)$ has at least two deep components. A map $f\colon X \rightarrow Y$ between metric spaces is {\it coarse Lipschitz} if there exist $L\geq 1$ and $A \geq 0$ so that $d\bigl( f(x), f(x') \bigr) \leq L\,d(x,x') +A$ for all $x,x' \in X$. A coarse Lipschitz map is {\it uniformly proper} if there is a proper function $\phi\colon\R^+ \rightarrow \R^+$ so that $d\bigl(f(x), f(x')\bigr) \geq \phi\bigl(d(x,x')\bigr)$ for all $x,x' \in X$. (In particular, a quasi-isometric embedding is uniformly proper.)
 \end{defn}

  We refer the reader to \cite{kapovichkleiner05} for the definition of a coarse $PD(n)$ space. We will not use the definition explicitly, rather, we will use a characterization of certain coarse $PD(n)$ spaces. 
  
    \begin{lem}[\cite{kapovichkleiner05}, Lemma~6.2]
    \label{lem:char_pd3}
    Let $M$ be an aspherical closed $n$--manifold equipped with a finite triangulation.
    Then its universal cover $\tilde{M}$ is a coarse $PD(n)$ space on which $\pi_1(M)$ acts properly, cocompactly, and simplicially.
  \end{lem}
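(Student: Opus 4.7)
The plan is to verify the two conclusions separately: the properties of the $\pi_1(M)$--action, and the coarse $PD(n)$ condition on $\tilde{M}$. The action statement is essentially formal. Choose a finite triangulation of $M$ and lift it to a triangulation of $\tilde{M}$. Deck transformations of the universal cover $\tilde{M} \to M$ permute the lifted simplices, so the action of $\pi_1(M)$ is simplicial. It is properly discontinuous because $\tilde{M} \to M$ is a covering map, and cocompact because the quotient is the compact space $M$.

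The substantive content is showing $\tilde{M}$ is a coarse $PD(n)$ space. First I would check the underlying hypotheses of bounded geometry and uniform acyclicity. Bounded geometry is immediate: the link of each vertex in $\tilde{M}$ is isomorphic to the link of its image in $M$, so since $M$ has finitely many vertex links, their cardinalities are uniformly bounded. For uniform acyclicity, I would combine asphericity with cocompactness: asphericity gives that $\tilde{M}$ is contractible, hence every subcomplex $K \subset \tilde{M}$ is null-homologous in $\tilde{M}$, and cocompactness of the $\pi_1(M)$--action reduces the problem of choosing a uniform filling radius $R_2 = R_2(R_1)$ to a finite computation. Concretely, up to the $\pi_1(M)$--action there are only finitely many subcomplexes of diameter at most $R_1$ (since they lie in finitely many translates of a fundamental domain), and each admits some finite filling radius; the maximum of these yields $R_2$.

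The main step, and the principal obstacle, is the Poincaré duality condition at infinity. Here the key input is that the aspherical closed $n$--manifold $M$ is a $PD(n)$ space in the classical sense of Wall, with $\pi_1(M)$ acting on $\Z\pi_1(M)$--coefficient chains. The plan is to transfer this classical duality to a coarse, chain-level duality on $\tilde{M}$ between locally finite chains and compactly supported cochains, shifted by $n$. Since $\pi_1(M)$ acts cocompactly on $\tilde{M}$, one may construct the relevant chain homotopy equivalences $\pi_1(M)$--equivariantly on a fundamental domain, and an equivariant chain map between bounded geometry complexes automatically has uniformly bounded displacement on each simplex. This bounded displacement is exactly what upgrades classical Poincaré duality to Kapovich--Kleiner's coarse $PD(n)$ property.

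Finally, I would assemble these pieces to match the precise formulation of coarse $PD(n)$ used in \cite{kapovichkleiner05}: bounded geometry plus uniform acyclicity plus the chain-level, boundedly supported duality between locally finite and compactly supported complexes. The hardest point to be careful about is ensuring all constants (filling radius, chain displacement, coefficient modules) are obtained with uniformity derived from the cocompact action, rather than depending on the subcomplex under consideration.
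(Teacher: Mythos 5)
This lemma is not proved in the paper at all: it is quoted directly from Kapovich--Kleiner \cite{kapovichkleiner05} (their Lemma~6.2) and used as a black box, so there is no internal argument to compare against; your proposal is in effect a reconstruction of the standard proof behind that citation, and its structure is sound. The formal points (simplicial, proper, cocompact deck action; bounded geometry from finiteness of the triangulation of $M$) are fine, and your uniform acyclicity argument is the right one: contractibility of $\tilde{M}$ plus the fact that, up to the action, there are only finitely many subcomplexes of diameter at most $R_1$, each a finite complex with finitely generated reduced homology, so a single filling radius $R_2(R_1)$ suffices. For the main step, transferring the chain-level Poincar\'e duality of the closed aspherical manifold (cap product with a fundamental cycle) to boundedly supported chain homotopy equivalences between locally finite chains and compactly supported cochains on $\tilde{M}$, with the displacement bound coming from equivariance together with finitely many orbits of simplices, is exactly the mechanism underlying Kapovich--Kleiner's lemma. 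Two points would need care in a full write-up: first, if $M$ is non-orientable the $\Z\pi_1(M)$--duality is twisted by the orientation character, so the duality maps on $\tilde{M}$ (which is orientable, being simply connected) are equivariant only up to sign; this is harmless, since the coarse $PD(n)$ definition requires boundedly supported maps rather than equivariant ones, and cap product with the locally finite fundamental cycle is a local operation, but your phrase ``construct the chain homotopy equivalences $\pi_1(M)$--equivariantly'' glosses over it. Second, the bounded displacement must be arranged for the chain homotopies as well as for the duality maps themselves, which again follows from cocompactness (or from uniform acyclicity) but should be said explicitly. With those caveats, your outline is a faithful account of the cited result rather than a different route.
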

 
 \begin{lem}[\cite{kapovichkleiner05}, Lemma~7.11]
 \label{separation}
  Let $W$ be a bounded geometry metric simplicial complex which is homeomorphic to a union $W = \bigcup_{i\in I} W_i$ of $k$ half-spaces $W_i \cong \R_{+}^2$ along their boundaries. Assume that for $i \neq j$, the union $W_i \cup W_j$ is uniformly acyclic and is uniformly properly embedded in $W$. Let $f\colon W \rightarrow X$ be a uniformly proper map of $W$ into a coarse $PD(3)$ space $X$. Then $f(W)$ coarsely separates $X$ into $k$ components. Moreover, there is a unique cyclic ordering on the index set $I$ so that for $R$ sufficiently large, the boundary of each deep component $C$ of $X \backslash N_R\bigl(f(W)\bigr)$ is at finite Hausdorff distance from $f(W_i) \cup f(W_j)$, where $i$ and $j$ are adjacent with respect to the cyclic ordering. 
 \end{lem}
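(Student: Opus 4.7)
The plan is to apply coarse Alexander duality, which for a uniformly properly embedded, uniformly acyclic subcomplex $Y$ of a coarse $PD(3)$ space $X$ relates the number of deep components of $X \setminus N_R(Y)$ (for large $R$) to a coarse version of $H^1_c(Y)$, or equivalently the reduced $0$th cohomology of the ``space of ends'' of $Y$ in an appropriate sense. This is the main technical tool from \cite{kapovichkleiner05} and it is what makes the statement reduce to a computation on $W$ itself.

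First I would handle the base case $k=2$. Then $W = W_1 \cup W_2$ is homeomorphic to $\mathbb{R}^2$, and by hypothesis this $W$ is uniformly acyclic and uniformly properly embedded. Coarse Alexander duality then forces $f(W)$ to coarsely separate $X$ into exactly two deep components, one on each ``side.'' For the general case I would proceed by an inductive/Mayer--Vietoris argument: writing $W = (W_1 \cup \cdots \cup W_{k-1}) \cup W_k$ with intersection equal to the common boundary line~$L$ (itself two-ended and uniformly acyclic), I would compare the coarse $H^1_c$ of $W$ to that of the pieces using the hypothesis that each pairwise union $W_i \cup W_j$ is uniformly acyclic and uniformly properly embedded, so that every pairwise union coarsely separates $X$ into exactly two deep components. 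Adding $W_k$ to $W_1 \cup \cdots \cup W_{k-1}$ splits exactly one of the $k-1$ existing deep components into two pieces, yielding $k$ deep components in total.

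To obtain the cyclic ordering, I would examine which pairs $(W_i, W_j)$ bound a common deep component $C$. Each deep component $C$ of $X \setminus N_R(f(W))$ has a coarse boundary which, by the inductive argument above applied to $W_i \cup W_j \subset W$, is at finite Hausdorff distance from $f(W_i) \cup f(W_j)$ for exactly two indices $i, j$. Define a graph on the index set $I$ by joining $i$ and $j$ when they arise this way. Each index $i$ appears in exactly two such pairs (since removing $W_i$ from $W$ merges exactly two adjacent deep components into one, by the $PD(3)$ duality count applied to $W \setminus W_i$); hence the graph is $2$-regular, and a counting argument using the total of $k$ deep components shows it is a single $k$-cycle. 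This gives the cyclic ordering, and its uniqueness is automatic since the adjacency data is intrinsic to the deep components of $X \setminus N_R(f(W))$.

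The main obstacle is the Mayer--Vietoris style inductive step, because in the coarse category one is not literally intersecting subsets but comparing deep components up to finite Hausdorff distance, and one must verify that adding one half-plane $W_k$ splits \emph{exactly} one existing deep component and not zero or two. This reduces to the pairwise hypothesis: the fact that $W_k \cup W_j$ already coarsely separates $X$ into two pieces for each $j$, together with uniform properness of the embedding and bounded geometry of $X$, ensures the correct bookkeeping. Everything else is a straightforward translation of the classical fact that a book of $k$ half-planes in $\mathbb{R}^3$ separates it into $k$ cyclically ordered components into the coarse $PD(3)$ setting.
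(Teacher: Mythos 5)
You should first be aware that the paper does not prove this statement at all: it is quoted verbatim from Kapovich--Kleiner \cite{kapovichkleiner05} (their Lemma~7.11) and used as a black box, so there is no in-paper proof to compare yours against. Judged on its own, your outline follows the same general spirit as Kapovich--Kleiner (coarse Alexander duality as the engine), but as written it has genuine gaps at exactly the points where the real work lies.

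The main problem is the inductive Mayer--Vietoris step. Coarse Alexander duality and coarse Jordan-type separation are statements about the ambient coarse $PD(3)$ space $X$; a deep component of $X \setminus N_R\bigl(f(W_1\cup\dots\cup W_{k-1})\bigr)$ is not itself a coarse $PD(3)$ space, so you cannot simply ``apply the $k=2$ case inside that component'' to conclude that adding $W_k$ splits it into exactly two deep pieces. That is precisely the claim that needs duality-theoretic input, and the pairwise hypothesis (each $W_i\cup W_j$ is uniformly acyclic and coarsely separates $X$ into two deep components) does not by itself give the relative statement you need; your phrase ``ensures the correct bookkeeping'' is standing in for the actual argument. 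The same issue infects the cyclic-ordering step: the assertion that each deep component's boundary is Hausdorff-close to $f(W_i)\cup f(W_j)$ for \emph{exactly} two indices, and that removing $W_i$ ``merges exactly two adjacent deep components,'' presupposes a controlled comparison between deep components of complements of $f(W)$ and of $f(W\setminus W_i)$ — a nontrivial matter (the present paper even cites \cite{hruskastark} to justify a much more modest statement of this flavor in Case~3 of Theorem~\ref{thm:3manclassification}). A correct argument along Kapovich--Kleiner's lines computes the number of deep components directly from coarse Alexander duality applied to the whole book $W$ (whose relevant compactly supported cohomology has rank $k-1$), rather than by adding one page at a time, and then derives the adjacency structure from the pairwise separation data; your sketch would need to be reorganized around that, or else the relative splitting claim proved separately.
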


 \subsection{$3$--manifold topology} \label{subsec:3mantop}
 
 To apply the techniques of Kapovich--Kleiner to groups in $\cC$, we will need the following proposition. 
 
 \begin{prop} \label{prop:3-man_to_pd3}
   Suppose a finitely generated, one-ended group $G$ is the fundamental group of a $3$--manifold. Then $G$ acts properly and simplicially on a coarse $PD(3)$ space $X$. In particular, for all $x \in X$, the orbit map $G \rightarrow X$ given by $g \mapsto g \cdot x$ is uniformly proper. 
 \end{prop}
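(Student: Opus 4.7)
The plan is to reduce to the case of a closed aspherical $3$--manifold and then invoke Lemma~\ref{lem:char_pd3}. Write $G = \pi_1(M)$ for a $3$--manifold $M$. First I would apply Scott's Compact Core Theorem to produce a compact submanifold $N \subset M$ whose inclusion induces an isomorphism $\pi_1(N) \cong G$, and cap off any $S^2$ boundary components with $3$--balls (which does not change $\pi_1$). Because $G$ is one-ended, Stallings' Ends Theorem prohibits splittings of $G$ over the trivial subgroup, so the Loop Theorem forces every boundary component of $N$ to be incompressible and the Sphere Theorem forces $\pi_2(N) = 0$. A standard Hurewicz/Whitehead argument, using that $\widetilde{N}$ is a non-compact simply connected $3$--manifold with $\pi_2 = 0$, then shows that $\widetilde{N}$ is contractible; hence $N$ is a compact aspherical $3$--manifold with incompressible (possibly empty) boundary.

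If $\partial N = \emptyset$, then $N$ is already closed and aspherical, and Lemma~\ref{lem:char_pd3} directly produces $X := \widetilde{N}$ as a coarse $PD(3)$ space on which $G$ acts properly, cocompactly, and simplicially. Otherwise, I would form the double $DN = N \cup_{\partial N} N$. Because $N$ is aspherical with incompressible (hence aspherical) boundary, a Mayer--Vietoris argument shows that $DN$ is a closed aspherical $3$--manifold. Lemma~\ref{lem:char_pd3} then provides $X := \widetilde{DN}$ as a coarse $PD(3)$ space on which $\pi_1(DN) = G *_{\pi_1(\partial N)} G$ acts properly, cocompactly, and simplicially, and restricting this action to the vertex subgroup $G$ yields the required proper simplicial action of $G$ on $X$.

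It then remains to verify that the orbit map $G \to X$ is uniformly proper, which is the step where cocompactness has been lost. The key observation is that the reflection symmetry of $DN$ that exchanges the two copies of $N$ and fixes $\partial N$ allows one to fold any path in $DN$ joining two points of $N$ into a path of the same length supported entirely in $N$; consequently $N \hookrightarrow DN$ is an isometric embedding, and so is any $G$--invariant lift $\widetilde{N}_0 \hookrightarrow X$. Since $G$ acts properly and cocompactly on $\widetilde{N}_0$, the orbit map $G \to \widetilde{N}_0$ is a quasi-isometry, and composing with the isometric embedding $\widetilde{N}_0 \hookrightarrow X$ exhibits the orbit map $G \to X$ as a quasi-isometric embedding, in particular as uniformly proper. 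The main technical hurdle lies in the first paragraph: the classical $3$--manifold topology needed to upgrade the bare hypothesis ``one-ended fundamental group of a $3$--manifold'' to a compact aspherical model with incompressible boundary. Once that reduction is in hand, the rest of the argument is essentially a clean application of Lemma~\ref{lem:char_pd3} combined with the isometric embedding provided by the doubling construction.
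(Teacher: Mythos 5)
Your proposal follows essentially the same route as the paper's proof: Scott's compact core, upgrading to a compact aspherical $3$--manifold with aspherical incompressible boundary, doubling, applying Lemma~\ref{lem:char_pd3} to the closed aspherical double, and restricting the resulting proper simplicial action to the subgroup $G\le\pi_1(DN)$. Two remarks. First, the step ``the Sphere Theorem forces $\pi_2(N)=0$'' silently uses more than the Sphere Theorem: to contradict one-endedness you must exclude fake balls (the Poincar\'e conjecture) and deal with non-orientable $N$ (where the relevant statement is the Projective Plane Theorem, or one passes, as the paper does, to the orientable double cover); these are exactly the ingredients the paper makes explicit, so you should too. Also, $\pi_1(DN)$ is the amalgam $G*_{\pi_1(\partial N)}G$ only when $\partial N$ is connected; in general it is a graph of groups with one edge per boundary component, though your conclusion that $G$ injects as a vertex group is still correct. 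Second, your final paragraph is more elaborate than needed, and its key claim---that a $G$--invariant lift $\widetilde{N}_0\hookrightarrow X$ is isometrically embedded---is asserted rather than proved (it can be justified by lifting the $1$--Lipschitz folding retraction $DN\to N$, but it is not automatic from the downstairs folding). In fact no cocompactness is needed at all: for a finitely generated group acting properly by isometries on a proper space (here simplicially on a locally finite complex), the set $\{h\in G : d(x,hx)\le R\}$ is finite for every $R$, which immediately makes every orbit map $g\mapsto g\cdot x$ coarse Lipschitz and uniformly proper; this is why the paper treats that clause as an immediate consequence.
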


 A similar result is used implicitly in \cite[Section~8]{kapovichkleiner}, but was not stated explicitly. For the benefit of the reader, we give a detailed proof using standard techniques from the topology of $3$--manifolds. Before proving the proposition, we briefly recall some useful background on $3$--manifolds. For more detail, we refer the reader to any of \cite{hempel,kapovich,hatcher_3man}. 
 
 \begin{defn}
    A $3$--manifold is {\it aspherical} if it is connected and its universal cover is contractible. A $3$--manifold $M$ is {\it prime} if whenever $M$ can be written as connected sum, $M \cong P \# Q$, then either $P \cong S^3$ or $Q \cong S^3$. A $3$--manifold $M$ is {\it irreducible} if every $2$--sphere $S^2 \subset M$ bounds a ball $B^3 \subset M$.
%A submanifold $N$ of a $3$--manifold $M$ is {\it incompressible} if for each boundary component $F$ of $N$, the natural map $\pi_1(F) \rightarrow \pi_1(M)$ is injective.
If $M$ is a $3$--manifold with boundary, an aspherical boundary component $F$ of $M$ is {\it incompressible} if the natural map $\pi_1(F) \rightarrow \pi_1(M)$ is injective.
 \end{defn}

%  \begin{lem} \label{lem:pi3man_to_pi_cca3man}
%   If a one-ended group $G$ is the fundamental group of a $3$--manifold, then $G$ is the fundamental group of a compact, connected, aspherical $3$--manifold with aspherical, incompressible boundary components. 
% \end{lem}
 
  \begin{proof}[Proof of Proposition~\ref{prop:3-man_to_pd3}]
   Suppose the finitely generated group $G$ is the fundamental group of a $3$--manifold $M$. By the Compact Core Theorem \cite{scott73b}, the group $G$ is the fundamental group of a compact manifold $N$ such that each boundary component $\pi_1$--injects into $N$.
We may assume, without loss of generality, that $N$ has aspherical boundary, since any $S^2$ or $\R P^2$ boundary component can be capped off by attaching a $3$--cell without changing the fundamental group.  Thus $N$ has aspherical, incompressible boundary.

Since $G=\pi_1(N)$ is one-ended, a standard argument shows that $N$ itself is aspherical as follows.  Let $\hat{N}\to N$ be an orientable cover of degree at most two.  Then the finite index subgroup $\hat{G} = \pi_1(\hat{N})$ is also one-ended.
By the Prime Decomposition Theorem, $\hat{N}$ is the connected sum $P_1 \# \ldots \# P_k$ of finitely many compact prime $3$--manifolds.  Since $\hat{G}$ is one-ended, it does not split as a nontrivial free product.  Thus all but one of the prime factors are simply connected and, hence, have empty boundary.
By the Poincar\'{e} Conjecture the closed, simply connected, prime factors are all spheres, so that $\hat{N}$ is prime.  Because $\hat{G}$ is one-ended, $\hat{N}$ must also be irreducible since the only other possibility is $S^2 \times S^1$, which has a $2$--ended fundamental group.
Since $\hat{N}$ is compact, connected, orientable, and irreducible, and $\hat{G}$ is infinite, the Sphere Theorem implies that $\hat{N}$ is aspherical (see for example 
Corollary~3.9 of \cite{hatcher_3man}).
Since $N$ and $\hat{N}$ have the same contractible universal cover, we see that $N$ is a compact, aspherical $3$--manifold with aspherical, incompressible boundary components.

To conclude the proof, we will see that $G$ acts properly and simplicially on a coarse $PD(3)$ space.
Let $DN$ denote the double of $N$ across its boundary.  Since $N$ and its boundary are compact and aspherical, $DN$ is a closed aspherical $3$--manifold.
Using any finite triangulation of $DN$, Lemma~\ref{lem:char_pd3} implies that its universal cover is a coarse $PD(3)$ space on which $DG = \pi_1(DN)$ acts simplicially.
Since each boundary component of $N$ $\pi_1$--injects into $N$, the double $DG$ is the fundamental group of a graph of groups with one vertex for each copy of $N$ and one edge for each component of $\boundary N$.
In particular, $\pi_1(N) \to \pi_1(DN)$ is injective.
Thus the subgroup $G < DG$ acts properly and simplicially on the universal cover of $DN$.
    \end{proof}

 \subsection{Classification theorems} \label{3manproof}
  
  \begin{thm}
  \label{thm:3manclassification}
  Let $G \cong \pi_1(S_g) *_{\la a^m=b^n \ra} \pi_1(S_h) \in \mathcal{C}_{m,n}$, where $a\in \pi_1(S_g)$ and $b \in \pi_1(S_h)$ are homotopy classes of essential simple closed curves. Then $G$ is the fundamental group of a $3$-manifold if and only if one of the following holds: 
  \begin{enumerate}
   \item $m=n=1$;
   \item $m=1$, $n=2$, and $b$ is the homotopy class of a non-separating curve; or,
   \item $m=n=2$, and $a$ and $b$ are homotopy classes of non-separating curves. 
  \end{enumerate}
 \end{thm}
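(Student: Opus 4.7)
The proof splits into the two implications. For the \emph{if} direction, in each listed case I would build a compact aspherical $3$--manifold $M$ with $\pi_1(M)\cong G$ by gluing $I$--bundles over $S_g$ and $S_h$ along annular neighborhoods on their boundaries. In case (1), use trivial bundles $S_g\times I$ and $S_h\times I$ and glue annular neighborhoods of $a\times\{\tfrac12\}$ and $b\times\{\tfrac12\}$ by a core-to-core homeomorphism. For cases (2) and (3), replace one or both trivial bundles by a twisted $I$--bundle over $S_g$ (respectively $S_h$) with monodromy $\phi\colon\pi_1(S_g)\to\Z/2$ such that $\phi([a])\neq 0$; its boundary is the connected double cover $\widehat{S}_g\to S_g$ corresponding to $\ker\phi$, and $a$ lifts to a single simple closed curve representing $a^2$ in $\pi_1(S_g)$. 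Such a $\phi$ exists precisely when $[a]\neq 0$ in $H_1(S_g;\Z/2)$, i.e., when $a$ is non-separating, matching the hypotheses of (2) and (3). Applying Seifert--van~Kampen to the gluing recovers $\pi_1(M)\cong G$.

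For the \emph{only if} direction, suppose $G$ is the fundamental group of a $3$--manifold. Since $G$ is one-ended (an amalgam of one-ended surface groups over an infinite cyclic edge group), Proposition~\ref{prop:3-man_to_pd3} supplies a coarse $PD(3)$ space $Y$ on which $G$ acts properly and simplicially. The model space $\widetilde X$ of Construction~\ref{surfmodel} is $G$--equivariantly quasi-isometric to $Y$, so any half-plane union in $\widetilde X$ satisfying the hypotheses of Lemma~\ref{separation} transports to a uniformly proper one in $Y$, and the lemma then forces a $\mathrm{Stab}_G$--invariant cyclic ordering of its branches. I would take a line $\ell=\{v\}\times\R$ at a valence-$k$ vertex $v\in T_{m,n}$ with $k\in\{m,n\}$, and let $W$ be the union of the $k$ sector half-planes of $T_{m,n}\times\R$ at $v$ together with a collared half-plane inside each of the two fattened-tree vertex spaces of $\widetilde X$ attached to $\ell$; this is a complex of $k+2$ half-planes along $\ell$. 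The stabilizer $\mathrm{Stab}_G(\ell)$ is infinite cyclic (generated by $a$ or $b$), and because an embedded curve on an orientable surface is two-sided, its generator acts on the $k+2$ branches by cyclically permuting the $k$ sectors while fixing each of the two trees.

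When $\max(m,n)\geq 3$ we may take $k\geq 3$; then $\mathrm{Stab}_G(W)$ contains an element of order $k$ fixing exactly two of the $k+2$ branches. In the dihedral group $D_{k+2}$ every nontrivial rotation is fixed-point-free and every reflection has order $2$, so no such element lies in $D_{k+2}$, and Lemma~\ref{separation} yields a contradiction. This rules out every pair $(m,n)$ except $(1,1)$, $(1,2)$, and $(2,2)$. In the remaining cases $(m,n)\in\{(1,2),(2,2)\}$ with at least one separating gluing curve, the stabilizer's swap-two-sectors-fix-two-trees action does sit in $D_4$ as a reflection through the two fixed branches, so the naive dihedral count is inconclusive; closing the argument here is the main obstacle. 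The plan is to exploit the separating hypothesis directly: since the two fattened trees attached to $\ell$ are then lifts of the two \emph{distinct} components of the cut-open surface and cannot be exchanged by the generator, the forced $D_4$--ordering must place those two non-homeomorphic trees antipodally. Comparing this forced cyclic ordering at $\ell$ with the ordering forced at a second line $\ell'$ that is a different boundary line of the same fattened tree, one expects the two local orderings to be incompatible by an elementary subgroup-of-finite-dihedral-group computation -- the ``Case~1'' argument signaled in the introduction. Verifying this combinatorial inconsistency for each separating subcase of $(1,2)$ and $(2,2)$ is the technical heart of the proof.
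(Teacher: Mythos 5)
Your ``if'' direction and your treatment of the cases with $\max(m,n)\geq 3$ follow the paper's argument: twisted $I$--bundles over the non-separating curve for the construction, and a union of half-planes along a line $\ell$ together with the Kapovich--Kleiner cyclic order (Lemma~\ref{separation}) plus the observation that a finite dihedral group contains no element of order $k\geq 3$ fixing two branches. Two technical points there: the two branches fixed by the generator should be the two halves of the embedded plane $\widetilde{S_h}\subset\widetilde{X}$ cut along $\ell$ (each half is a union of infinitely many fattened-tree pieces and lines), not ``collared half-planes inside'' the two fattened trees attached to $\ell$ --- a fattened tree is quasi-isometric to a tree and contains no suitable uniformly properly embedded half-plane satisfying the hypotheses of Lemma~\ref{separation}; and when $m=1$ the tree $T_{1,n}$ is finite, so your ``sector half-planes'' are only strips and must be extended through the adjacent surface planes, exactly as the paper does in its Case~2. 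These are fixable, as is the slip of gluing along the interior annuli $a\times\{\tfrac12\}$ rather than boundary annuli in case (1).

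The genuine gap is in the remaining cases, $m=n=2$ with $a$ or $b$ separating and $m=1$, $n=2$ with $b$ separating, which you explicitly leave open, and your proposed plan does not close them. The two complementary pieces of the cut surface can perfectly well be homeomorphic (a separating curve can split a genus-$4$ surface into two genus-$2$ pieces), so there is no ``non-homeomorphic trees placed antipodally'' constraint; and it is unclear that comparing cyclic orders at two different lines can detect separation at all, since the obstruction here is homological rather than order-theoretic. The paper's argument uses exactly that homological input: a separating curve lies in the commutator subgroup, so every homomorphism $\pi_1(S_h)\to\Z/2\Z$ kills $b$. One then manufactures such a homomorphism from coarse topology: by Lemma~\ref{separation}, $W=H_1\cup H_2$ (the two halves of $\widetilde{S_h}$ along the line $\ell$ stabilized by $\la b \ra$) coarsely separates the coarse $PD(3)$ space into two deep components, and $\pi_1(S_h)$ stabilizes $f(W)$, so its action on the two deep components defines $\phi\colon\pi_1(S_h)\to\Z/2\Z$. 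A second application of Lemma~\ref{separation} to the four half-planes $H_1,H_2,E_1,E_2$, where $E_1,E_2$ are the halves of the Euclidean piece $T_{2,2}\times\R$ along $\ell$, forces $E_1$ and $E_2$ to be antipodal in the $D_4$ cyclic order (since $b$ swaps them and fixes $H_1,H_2$), hence to lie in different deep components of the complement of $f(H_1\cup H_2)$; therefore $b$ swaps the two deep components, $\phi(b)\neq 0$, a contradiction. Without this ingredient (or an equivalent one) your argument does not conclude, so the proof is incomplete precisely at what you yourself identify as its technical heart.
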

 
 \begin{proof}
  Let $G \cong \pi_1(S_g) *_{\la a^m=b^n \ra} \pi_1(S_h) \in \mathcal{C}_{m,n}$, where $a\in \pi_1(S_g)$ and $b \in \pi_1(S_h)$ are homotopy classes of essential simple closed curves. 
  
    \begin{center}
   {\it Construction of $3$--manifold structure.}
  \end{center}

  Suppose first that conditions (1), (2), or (3) hold. We will realize $G \cong \pi_1(S_g) *_{\la a^m=b^n \ra} \pi_1(S_h)$ as the fundamental group of $M_G$, an aspherical $3$-manifold with boundary.

  Suppose $a_0$ is an essential simple closed curve on $S_g$ in the homotopy class of $a \in \pi_1(S_g)$, and suppose $b_0$ is an essential simple closed curve on $S_h$ in the homotopy class of $b \in \pi_1(S_h)$. To construct $M_G$, we will glue an $I$--bundle over $S_g$ to an $I$--bundle over $S_h$ along an annulus in the boundary of each bundle, where the annulus on the $I$--bundle over $S_g$ is freely homotopic to $a_0^m$ and the annulus on the $I$--bundle over $S_h$ is freely homotopic to $b_0^n$. The choice of $I$--bundle over $S_g$ depends on whether $m=1$ or $m=2$; likewise, the choice of $I$--bundle over $S_h$ depends on whether $n=1$ or $n=2$. We describe the $I$--bundle over $S_g$ and the choice of annulus on its boundary; the construction is analogous for $S_h$. 
  
      \begin{figure}%[h]
      \begin{overpic}[scale=.25,  tics=5]{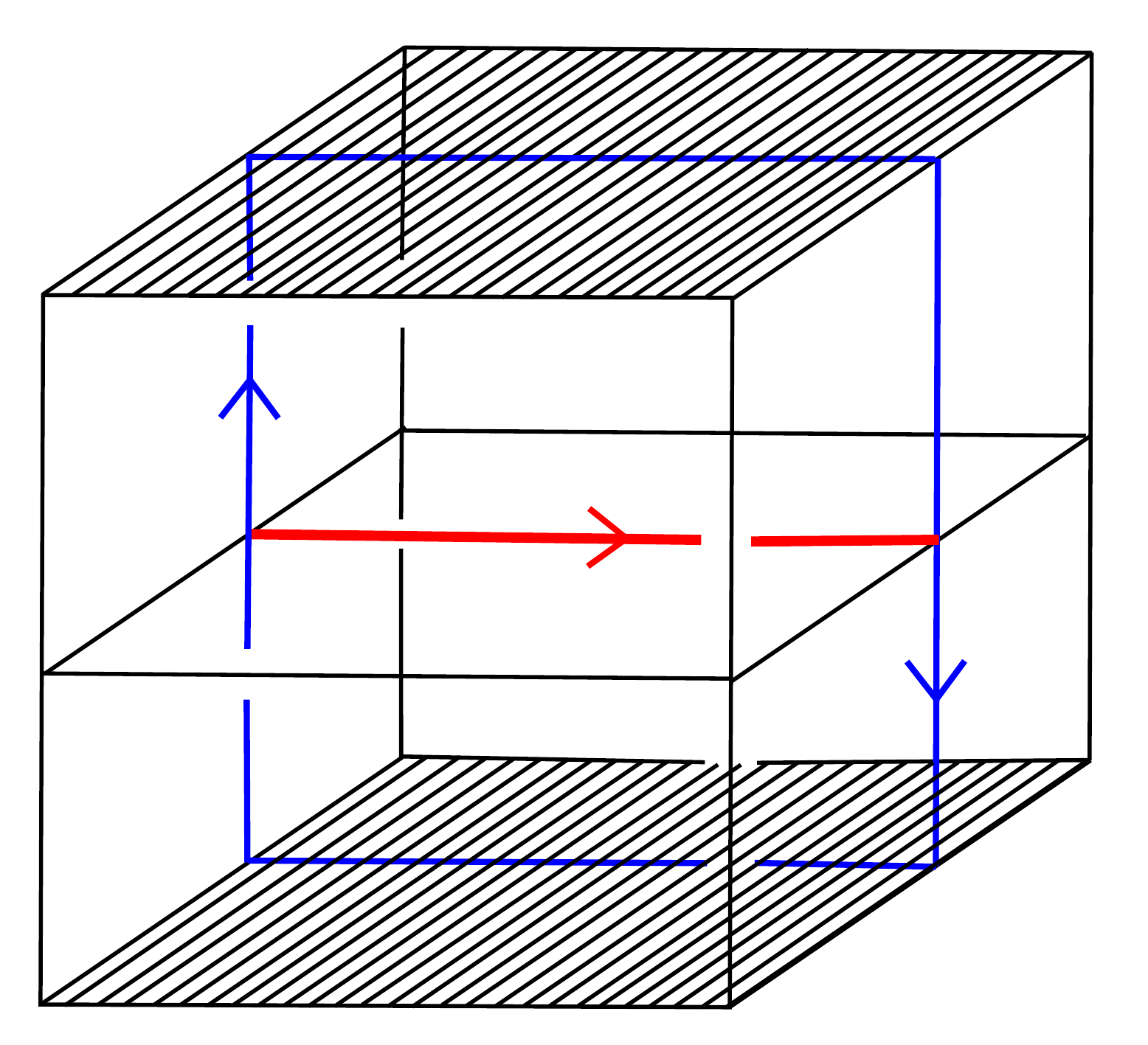}
      \put(40,47){$a_0$}
      \end{overpic}
	\caption{{\small Pictured above is the solid Klein bottle, formed by the $3$-cube $[-1,1]^3$ modulo the identification $(-1,y,z) \sim (1, y, -z)$: the left-hand face of the cube has been identified to the right-hand face of the cube by a reflection about the horizontal mid-plane. The top and bottom shaded faces glue to form an annulus. The solid Klein bottle is a subspace of an $I$--bundle over the surface that is twisted over the a non-separating $a_0$.    }}
      \label{Ibundle}
     \end{figure}
  
  If $m=1$, let $M_g \cong S_g \times [-1,1]$ be the trivial $I$--bundle over $S_g$. Let $N(a_0) \subset S_g$ be a regular neighborhood of $a_0$, and let $A_g = N(a_0) \times \{1\} \subset S_g \times I$ be an annulus on the boundary of $M_g$. Then $A_g$ is freely homotopic to the curve $a_0$. 
  
  If $m=2$, then by assumption, the homotopy class $a$ is represented by a non-separating simple closed curve $a_0$.
In this case, there exists a retraction $r\colon S_g \to \Image(a_0)$,
and the induced map $r_*$ gives rise to a homomorphism $\pi_1(S_g) \to \Z \to \Z/2\Z$
sending $a$ to a nontrivial element.
Any homomorphism $\phi\colon \pi_1(S_g) \rightarrow \Z/2\Z$ gives rise to an $I$--bundle over $S_g$ whose monodromy---i.e., the twisting over various curves---is given by $\phi$.
We briefly recall the construction of this bundle.
Let $\Z/2\Z$ act on the interval $I=[-1,1]$ via multiplication by $\pm 1$.
Then $\phi$ induces a corresponding action of $\pi_1(S_g)$ on $I$.
Let $\pi_1(S_g)$ act on $\tilde{S_g} \times I$ by a diagonal action that consists of the covering space action on the first factor and $\phi$ on the second factor.
The quotient by this action is an $I$--bundle over $S_g$ that is twisted over $a_0$:
the bundle over $a_0$ is a M\"{o}bius band with center circle $a_0$. Let $N(a_0)$ be a regular neighborhood of $a_0$ on $S_g$, and let $K_g \subset M_g$ be the union of the $I$--fibers over $N(a_0)$. Then the subspace $K_g \subset M_g$ forms a solid Klein bottle, as shown in Figure~\ref{Ibundle}. Let $A_g$ be the boundary of this subspace $K_g \subset M_g$, which is an annulus freely homotopic to $a_0^2$ and is shaded in Figure~\ref{Ibundle}. 

Let $M_h$ be the $I$--bundle over $S_h$ formed analogously, depending on whether $n=1$ or $n=2$. Let $A_h$ be the corresponding annulus on the boundary of $M_h$ that is freely homotopic to $b_0^n$. Identify $A_g\subset M_g$ and $A_h \subset M_h$ via a homeomorphism to form the aspherical space $M_G$ as desired. Thus, if (1), (2), or (3) hold, $G$ is the fundamental group of an aspherical $3$--manifold.

  \begin{center}
    {\it Obstruction to the existence of a $3$--manifold structure.}
  \end{center}
  
  Suppose now that none of (1)--(3) hold. Suppose towards a contradiction that $G$ is the fundamental group of a $3$-manifold. Then, by Proposition~\ref{prop:3-man_to_pd3}, $G$ acts properly on a coarse $PD(3)$ space~$X$. As described below, $G$ also acts freely and cocompactly on a model space $Y$, and suppose $W$ is a convex subset of $Y$. Then, the orbit maps $G \rightarrow X$ and $G \rightarrow Y$ given by $g \mapsto g \cdot x$ and $g \mapsto g \cdot y$ for $x \in X$ and $y \in W \subset Y$ define a $G$--equivariant uniformly proper map $f\colon Y\rightarrow X$,
which restricts to a $G$--equivariant uniformly proper map
$f\colon W \rightarrow X$.
We will examine the action of $G$ on a model space $Y$ and apply the coarse separation results of Kapovich--Kleiner to conclude $G$ is not the fundamental group of a $3$--manifold. 
  
  \begin{center}
   {\it Action of $G$ on a model space $Y$.}
  \end{center}

  As shown in Construction~\ref{surfmodel}, $G$ acts freely and cocompactly by isometries on a model space $Y$ of type $(m,n,\mathbb{X}, 2)$ as defined in Construction~\ref{modelspaces}. The space $Y$ is a locally-finite cell complex by construction, and the cells of $Y$ may be subdivided $G$--equivariantly to give $Y$ the structure of a bounded geometry metric simplicial complex. The subgroup $\la a,b \ra \leq G$ acts geometrically on one copy of $T_{m,n} \times \R$ in $Y$. The action of $a$ and $b$ on $T_{m,n} \times \R$ decomposes as a product. The element $a$ cyclically permutes the $m$ edges adjacent to a vertex $v$ of valance $m$ in $T_{m,n} \times \{0\}$ and translates by $n$ units in the $\R$--direction. Likewise, the element $b$ cyclically permutes the $n$ edges adjacent to a vertex $w$ of valance $n$ adjacent to $v$ in $T_{m,n} \times \{0\}$ and translates by $m$ units in the $\R$--direction. Note that if $m=1$, then $a$ acts only by translation in the $\R$--direction, and if $n=1$, then $b$ acts only by translation in the $\R$--direction. Recall, each line $v \times \R \subset T_{m,n} \times \R$ is called an {\it essential line}. 
  
  \begin{center}
   {\it Case 1: $m \geq 2$, $n \geq 3$.}
  \end{center}
  
  Suppose $m \geq 2$, $n\geq 3$. Suppose $\la a,b \ra \leq G$ stabilizes $T_{m,n} \times \R \subset Y$ as described above. Choose a geodesic ray $\rho$ based at the vertex $w$ in $T_{m,n} \times \{0\}$, and let $P_0 = \rho \times \R \subset T_{m,n} \times \R$. The space $P_0$ is homeomorphic to $\R_2^{+}$. Let $P_i = b^iP_0$ for $0 \leq i \leq n-1$. Then $P_i \cap P_j = \{w\} \times \R$, a line we denote by $\ell$. Glued to $\ell$ is also a copy of $\widetilde{S_h}$, the universal cover of the surface $S_h$. Let $H_1$ and $H_2$ be the two half-planes of $\widetilde{S_h}$ bounded by $\ell$. Let $W$ be the union of $P_i$ for $0 \leq i \leq n-1$,  $H_1$, and $H_2$ along $\ell$.  Then $W$ is union of $n+2$ half-planes and satisfies the conditions of Lemma~\ref{separation}. 
  
   By Lemma~\ref{separation}, there is a unique cyclic ordering on the index set of the half-planes in $f(W)$, and this cyclic order is preserved by any homeomorphism of $X$ that preserves $f(W)$. The group of permutations of $n+2$ elements which preserves a cyclic order is isomorphic to the dihedral group $D_{n+2}$ of order $2(n+2)$. 
   Since $b \in G$ preserves $W$ and $f$ is $G$-equivariant, $b$ preserves $f(W)$, and thus $b$ preserves the cyclic order on the index set of the half-planes in $f(W)$. Therefore, $b$ corresponds to an element $\sigma \in D_{n+2}$. However, the element $b$ acts on $W$ by cyclically permuting the half-planes $P_i$ and stabilizing each half-plane $H_i$, $i = 1,2$. Thus, $\sigma \in D_{n+2}$ is an $n$--cycle. When $n \geq 3$, $D_{n+2}$ does not contain an $n$--cycle, a contradiction. Thus, we conclude that  $G$ is not the fundamental group of a $3$--manifold. 
   
   \begin{center}
    {\it Case 2: $m=1$, $n \geq 3$.}
   \end{center}
  
  Suppose $m=1$ and $n \geq 3$. The arguments in this case are similar to those in Case 1 and account for the fact that $T_{1,n}$ is finite, so no geodesic ray $\rho \subset T_{1,n}$ exists. 
  
  Let $T_{1,n} \times \R \subset Y$ be the complex stabilized by $\la a, b \ra$. Let $\ell_0$ be the essential line in $T_{1,n} \times \R$ stabilized by $b$, and let $\ell_1, \ldots, \ell_n$ be the other $n$ essential lines in $T_{1,n} \times \R$. In the model space $Y$, the line $\ell_0$ is glued to a copy of $\widetilde{S_h}$, the universal cover of $S_h$, along a line. Let $H_1$ and $H_2$ be the two half-planes of $\widetilde{S_h}$ bounded by $\ell_0$. Similarly, each line $\ell_i$ for $1 \leq i \leq n$ is identified to a copy of $\widetilde{S_g}$, the universal cover of $S_g$, along a line. Let $J_{i1}$ and $J_{i2}$ be the two half-planes of $\widetilde{S_g}$ bounded by $\ell_i$. Finally, let $e_1, \ldots, e_n$ be the $n$ edges in $T_{1,n}$ so that $e_i \cap \ell_i\neq \emptyset$, and let $P_{ij}$ be the union of $e_i \times \R$ with $J_{ij}$ for $1 \leq i \leq n$ and $j=1,2$. Let $W$ be the union of $P_{ij}$ for $1\leq i \leq n$, $j=1,2$, $H_1$, and $H_2$ along $\ell_0$. Then $W$ is the union of $2n+2$ half-planes and satisfies the conditions of Lemma~\ref{separation}. 
   
  By Lemma~\ref{separation}, there is a unique cyclic ordering on the half-planes in $f(W)$ that is preserved by the element $b$ since $b$ preserves $W$. Thus, $b$ corresponds to an element $\sigma \in D_{2n+2}$. The element $b$ stabilizes each half-plane $H_1$ and $H_2$, so $b$ is either trivial or a reflection. However, $b$ cyclically permutes the lines $\ell_i$ for $1 \leq i \leq n$, and $n \geq 3$, so the order of $\sigma$ is greater than two, a contradiction. Thus, in this case, $G$ is not the fundamental group of a $3$--manifold. 
  
    \begin{center}
   {\it Case 3: $m=n=2$ and $a$ or $b$ is separating.}
  \end{center}

  Suppose $m=n=2$ and $b$ is a separating curve. The argument is analogous if $a$ is a separating curve. 
  
  Note first that if $b$ is the homotopy class of a separating curve, then $b$ is an element of the commutator subgroup of $\pi_1(S_h)$. So, $b$ maps to the identity in any homomorphism from $\pi_1(S_h)$ to an abelian group. In particular, if $\phi\colon\pi_1(S_h) \rightarrow \Z / 2\Z$ is a homomorphism then $\phi(b) = 0$. 
  
  A homomorphism $\pi_1(S_h) \rightarrow \Z / 2\Z$ is given as follows. Let $\ell \subset Y$ be the line stabilized by $\la b \ra$. The line $\ell$ is glued to $\widetilde{S_h}$, the universal cover of $S_h$; let $H_1$ and $H_2$ be the two half-planes in $\widetilde{S_h}$ bounded by $\ell$. Let $W = H_1 \cup H_2$, which satisfies the conditions of Lemma~\ref{separation}. The $G$--equivariant uniformly proper map $f\colon Y \rightarrow X$ described above restricts to a $G$--equivariant uniformly proper map $f\colon W\rightarrow X$. By Lemma~\ref{separation}, $f(W)$ coarsely separates $X$ into two deep components. The subgroup $\pi_1(S_h)$ stabilizes $W$ and hence $f(W)$, which yields a homomorphism $\phi\colon\pi_1(S_h) \rightarrow \Z/2\Z$. 
  
  To conclude this case, we show that $\phi(b) \neq 0$ by applying Lemma~\ref{separation} a second time. The line $\ell$ is also incident to a copy of $T_{2,2} \times \R \equiv \E^2$. Let $E_1$ and $E_2$ be the two half-planes in $T_{2,2} \times \R$ bounded by $\ell$. Let $W' = H_1 \cup H_2 \cup E_1 \cup E_2$, a union of four half-planes that satisfies the  conditions of Lemma~\ref{separation}. 
  The $G$--equivariant uniformly proper map $Y \rightarrow X$ restricts to a $G$--equivariant uniformly proper map $f'\colon W' \rightarrow X$ that extends the uniformly proper map $f\colon W\rightarrow X$.
  By the arguments of Case (1), the element $b \in G$ corresponds to a transposition $\sigma \in D_4$. So, the half-planes $f'(E_1)$ and $f'(E_2)$ must be opposite in the cyclic order. Thus, $f'(E_1)$ and $f'(E_2)$ are in different deep components of $X \backslash f'(H_1 \cup H_2) = X \backslash f(H_1 \cup H_2)$. (This fact, while intuitive, can be more carefully justified using \cite{hruskastark}.) Therefore, $b$ non-trivially permutes the deep components of $X \backslash f(H_1 \cup H_2)$, so $\phi(b) \neq 0$, a contradiction. Thus, in this case, $G$ is not the fundamental group of a $3$--manifold. 
  
  \begin{center}
   {\it Case 4: $m=1$, $n=2$, and $b$ is separating.}
  \end{center}

The modification we used above to pass from Case~1 to Case~2 can also be used in a similar way to modify our argument from Case~3 into a proof for Case~4.

 Cases 1--4 cover all possibilities that conditions (1)--(3) of the theorem do not hold. Thus, if conditions (1)--(3) do not hold, the group $G$ is not the fundamental group of a $3$-manifold. 
\end{proof}

The next theorem implies that each right-angled Coxeter group with generalized $\Theta$--graph nerve acts properly on a contractible $3$--manifold, since every generalized $\Theta$--graph is planar. The theorem is due to Davis--Okun, and was used implicitly in the proof of \cite[Theorem~11.4.1]{davisokun}. We give an explicit proof here for the benefit of the reader. 
 
\begin{thm}[Davis--Okun]
\label{thm:racg_3man}
Let $W$ be a right-angled Coxeter group with nerve a flag simplicial complex $L$. Suppose $L$ is planar and connected with dimension at most $2$. Then the group~$W$ acts properly on a contractible $3$--manifold. 
\end{thm}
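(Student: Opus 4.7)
The plan is to realize $W = W_L$ as a special subgroup of a larger right-angled Coxeter group $W_{L'}$ whose defining nerve $L'$ is a flag triangulation of the $2$-sphere $S^2$ containing $L$ as a full subcomplex, and then to invoke the classical theorem of Davis \cite{davis} that the Davis complex of a flag triangulation of $S^{n-1}$ is a contractible $n$-manifold on which the associated Coxeter group acts properly and cocompactly. This is a standard thickening trick: the Davis complex $\Sigma_L$ itself need not be a manifold, but we can enlarge $L$ to a sphere in order to obtain a manifold Davis complex on which the smaller group $W_L$ still acts.

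First I would embed $|L|$ in $S^2$. Since $L$ is a connected planar simplicial $2$-complex, such an embedding exists, and the complementary regions of $|L|$ in $S^2$ are open disks whose boundary circuits lie in $|L|$. By adding finitely many vertices in the interior of each complementary disk and triangulating, I extend $L$ to a simplicial triangulation $T$ of $S^2$ with $L$ a subcomplex. Then I modify $T$ to a flag triangulation $L'$ of $S^2$ while preserving $L$ as a full subcomplex: $L$ itself is already flag, so any empty $3$-cycle in $T$ must use at least one vertex of $T \smallsetminus L$, and a sufficiently fine subdivision of the simplices of $T$ lying outside $|L|$ (for instance, a barycentric subdivision restricted to the complementary triangulation, possibly iterated) eliminates all such empty $3$-cycles. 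The result is a flag triangulation $L'$ of $S^2$ in which $L$ sits as a full subcomplex.

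To finish, the full subcomplex inclusion $L \hookrightarrow L'$ induces an injection $W_L \hookrightarrow W_{L'}$ realizing $W$ as a special (parabolic) subgroup of $W_{L'}$. Because $L'$ is a flag triangulation of $S^2$, Davis's theorem tells us that the Davis complex $\Sigma_{L'}$ is a contractible $3$-manifold and that $W_{L'}$ acts on it properly and cocompactly by isometries. Restricting this action to the subgroup $W$ preserves properness (the stabilizer of any compact set in $W$ is contained in the stabilizer in $W_{L'}$, hence finite), so $W$ acts properly by isometries on the contractible $3$-manifold $\Sigma_{L'}$, which is exactly what was to be proved.

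The main obstacle is the flag-extension step: one must extend $L$ to a flag triangulation of $S^2$ without subdividing $L$ itself and in a way that keeps $L$ a full subcomplex of the result. This would be false for a general simplicial extension---$T$ need not be flag---but because every simplex of $L'$ not already in $L$ contains at least one vertex placed in the interior of a complementary disk, subdividing only within those disks suffices to kill all empty $3$-cycles while leaving the combinatorics of $L$ untouched. Once $L'$ is constructed, the appeal to Davis's theorem and the passage to a subgroup action are routine.
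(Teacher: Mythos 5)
Your proposal is correct and follows essentially the same route as the paper: embed $L$ in $S^2$, complete it to a flag triangulation of the sphere containing $L$ as a full subcomplex, invoke Davis's theorem that the corresponding Davis complex is a contractible $3$--manifold with a proper cocompact action, and restrict that action to the special subgroup $W_L$. The only divergence is in how the flag completion is produced: you triangulate the complementary disks and then subdivide away from $L$ to kill empty $3$--cycles, while the paper simply cones each complementary region to a single new vertex.
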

 
\begin{proof}
Embed the complex $L$ into $S^2$, the $2$--sphere. Fill each complementary region on the sphere with $2$--simplices by adding a vertex in the interior of each region and coning off the boundary of the region to the new vertex. This procedure produces a flag triangulation $S$ of $S^2$, which has $L$ as a full subcomplex. Let $W_S$ be the right-angled Coxeter group with nerve the flag triangulation $S$. Since the nerve $S$ is a $2$--sphere, the Davis complex $\Sigma_S$ associated to $S$ is a contractible $3$-manifold on which $W_S$ acts properly and cocompactly. The original group $W = W_L$ is a subgroup of $W_S$ so it also acts properly on the same $3$--manifold.
\end{proof}

  Using Proposition~\ref{3mancovers} and Theorem~\ref{thm:racg_3man}, we recover an alternate proof of Theorem~\ref{thm:surf_amal_3mancover}.
 
  \begin{cor} \label{cor:surf_3man}
  Let $G \in \mathcal{C}_{m,n}$. Then $G$ has a finite-index subgroup that acts freely on a contractible $3$--manifold. 
 \end{cor}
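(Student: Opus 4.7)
The plan is to combine the two main ingredients already in place: Theorem~\ref{3mancovers}, which identifies each $G \in \cC_{m,n}$ as abstractly commensurable to a right-angled Coxeter group $W$ with generalized $\Theta$--graph nerve, and Theorem~\ref{thm:racg_3man}, which produces a proper action of such a $W$ on a contractible $3$--manifold. The only work beyond assembling these two results is to pass from a proper action (which may fail to be free because $W$ has torsion from its reflection generators) to a free action on the same $3$--manifold, and to arrange things so that the free-acting group is also a finite-index subgroup of $G$.

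First I would invoke Theorem~\ref{3mancovers} to fix a right-angled Coxeter group $W \in \cW$ abstractly commensurable to $G$, so that we have finite-index subgroups $G_1 \le G$ and $W_1 \le W$ with an isomorphism $G_1 \cong W_1$. Next I would observe that the defining graph of $W$, being a generalized $\Theta$--graph, is a connected planar simplicial complex of dimension~$1$ (in particular triangle-free, hence flag); applying Theorem~\ref{thm:racg_3man} gives a proper action of $W$ on a contractible $3$--manifold $M$.

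To promote this to a free action, I would use the fact that a finitely generated right-angled Coxeter group is virtually torsion-free (for instance via its standard faithful linear representation and Selberg's lemma, or directly because $W$ embeds in the right-angled Coxeter group associated to a flag triangulation of $S^2$ built in the proof of Theorem~\ref{thm:racg_3man}, which is itself virtually torsion-free). Let $W_0 \le W$ be a torsion-free finite-index subgroup, and set $W_2 = W_0 \cap W_1$, a torsion-free finite-index subgroup of $W_1$. Since the $W$--action on $M$ is proper and $W_2$ is torsion-free, the restricted action of $W_2$ on $M$ is free. Transporting $W_2 \le W_1$ through the isomorphism $W_1 \cong G_1$ produces a finite-index subgroup $G_2 \le G_1 \le G$ acting freely on $M$, which is the desired conclusion.

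No step is genuinely a serious obstacle; the only mild subtlety is confirming the virtual torsion-freeness of $W$, and this is standard for right-angled Coxeter groups, so the corollary is essentially a formal consequence of Theorems~\ref{3mancovers} and~\ref{thm:racg_3man}.
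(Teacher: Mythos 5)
Your proof is correct and takes essentially the same route as the paper, which obtains the corollary by combining Theorem~\ref{3mancovers} with Theorem~\ref{thm:racg_3man}. The only simplification available is that your detour through virtual torsion-freeness of $W$ is unnecessary: the finite-index subgroup $W_1 \le W$ is isomorphic to a finite-index subgroup of the torsion-free group $G$, so it is itself torsion-free and the restriction of the proper $W$--action to $W_1$ is already free.
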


% \begin{proof}
%  Let $G \in \mathcal{C}_{m,n}$. By Proposition~\ref{3mancovers}, $G$ has a finite-index subgroup $H$ that is isomorphic to a finite-index subgroup of a right-angled Coxeter group $W$ with defining graph a generalized $\Theta$--graph. Since $W$ acts properly on a contractible $3$--manifold by Theorem~\ref{thm:racg_3man} and the group $H$ is torsion-free, the group $H$ acts freely on a contractible $3$--manifold. 
% \end{proof}

%%%%%%%%%%%%%%%%%%%%%%%%%%%%%%%%%%%%%%%%%%%%%%%%%%%%%%%%%%%%%%%
\appendix
\section{Quasi-isometry classification}
\label{sec:QIClassification}
%%%%%%%%%%%%%%%%%%%%%%%%%%%%%%%%%%%%%%%%%%%%%%%%%%%%%%%%%%%%%%%
\subsection{Construction of quasi-isometries}
\label{sec:QIConstruction}

In this appendix we construct quasi-isometries between model spaces, and we use the following results of Behrstock--Neumann \cite{behrstockneumann} and Whyte \cite{whyte} on the geometry of the vertex spaces of the model spaces. 

\begin{thm}[\cite{behrstockneumann}, Theorem~1.2]
\label{bs1}
Let $X$ be a fattened tree with a chosen boundary component $\partial_0X$. Then there exist $K$ and a function $\phi\colon\RR \rightarrow \RR$ such that for any $K'$ and any $K'$--bilipschitz homeomorphism $\Phi_0$ from $\partial_0X$ to a boundary component $\partial_0X_0$, the map $\Phi_0$ extends to a $\phi(K')$--bilipschitz homeomorphism $\Phi\colon X \rightarrow X_0$ which is $K$--bilipschitz on every other boundary component. 
\end{thm}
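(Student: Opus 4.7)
The plan is to construct the extension $\Phi\colon X \to X_0$ in layers starting from $\partial_0X$, with the key observation that any distortion depending on $K'$ is confined to a bounded neighborhood of $\partial_0 X$, while every subsequent layer is governed only by the intrinsic bounded geometry of fattened trees.

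First I would analyze the combinatorial structure of a fattened tree. Such a space is assembled from rectangular strips (one per edge of the underlying tree $T$, with long-side length in $[1,L]$) and regular polygons (one per vertex of $T$, with side length $2\epsilon$), glued to one another along the short ends of the strips. The boundary component $\partial_0X$ is thereby exhibited as a bi-infinite line formed by a concatenation of long sides of strips, meeting at polygon corners. This decomposition selects a ``first layer'' $N \subset X$, namely the union of all strips and polygons met by the boundary walk of $\partial_0 X$. The same description applies to $\partial_0X_0$ in $X_0$, yielding a corresponding first layer $N_0 \subset X_0$.

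Next I would extend $\Phi_0$ across $N$. After a bounded perturbation of $\Phi_0$ adjusting it to a combinatorially compatible matching of strip-sides along $\partial_0X$ with strip-sides along $\partial_0X_0$, each strip is a Euclidean rectangle and a bilipschitz map of one long side extends to the entire rectangle with dilation comparable to $K'$. At each polygon of $N$ one must map it to the corresponding polygon of $N_0$, even though their valences may differ; the trick is to pre-select, for every pair of polygon-valences bounded by the global valence constant, a ``model'' bilipschitz homeomorphism that is allowed to stretch in the direction of the $\partial_0X$-side but is rigid (i.e., universally bilipschitz of a standard fixed form) on the remaining sides. This produces a $\phi_1(K')$-bilipschitz map on $N$ whose boundary behavior away from $\partial_0X$ is already bilipschitz with a constant $K_1$ depending only on the valence and edge-length bounds, not on $K'$.

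Then I would extend to $X \setminus N^\circ$. This complement is a disjoint union of sub-fattened-trees $X'_\alpha$, each attached to $N$ along a finite collection of long sides of strips belonging to boundary components distinct from $\partial_0X$. By the previous step, the restriction of the map on $N$ to each such attaching side is $K_1$-bilipschitz with $K_1$ universal. One now runs the same construction inductively on each $X'_\alpha$, but with $K_1$ in place of $K'$; since $K_1$ is independent of $K'$, the resulting extension has a uniform bilipschitz constant $K := \phi_1(K_1)$ throughout $X \setminus N^\circ$, and in particular on every boundary component of $X$ other than $\partial_0X$. Gluing yields a global bilipschitz homeomorphism $\Phi\colon X \to X_0$ of dilation $\phi(K') := \max\{\phi_1(K'), K\}$, restricting to $\Phi_0$ on $\partial_0X$ and to a $K$-bilipschitz map on every other boundary component.

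The main obstacle is carrying out the polygon-matching step so that the stretching genuinely localizes to the $\partial_0X$-side. One must pre-choose the universal model polygon homeomorphisms so that their composition with a $K'$-bilipschitz map of one designated side remains bilipschitz on the polygon and yet is \emph{uniformly} bilipschitz on the non-designated sides, independent of $K'$. Because the number of polygon-valence pairs that can occur is bounded, only finitely many such model maps need to be constructed, which is what allows $\phi$ to be a function and $K$ to be a single constant. Once this rigidity on non-designated sides is secured, the inductive step propagates a fixed bilipschitz constant through all deeper layers, which is the heart of why the extension improves from $\phi(K')$ on $\partial_0X$ to a uniform $K$ on every other boundary component.
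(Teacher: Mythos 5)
You should first note that the paper does not prove this statement at all: it is quoted verbatim from Behrstock--Neumann \cite{behrstockneumann}, so there is no in-paper argument to compare against, and your sketch has to be judged on its own terms. Its guiding idea --- confine the $K'$-dependent distortion to a bounded neighborhood of $\partial_0X$ and then propagate a single universal constant $K$ outward through the tree --- is indeed the right strategy for a result of this kind. But as written the construction has a genuine structural gap. The target $X_0$ is the \emph{standard} fattened tree: trivalent, unit edge lengths, $\epsilon=\tfrac12$, so its polygons are triangles, while $X$ may have vertices of any valence up to $K$ and edge lengths anywhere in $[1,L]$. Consequently there is no ``corresponding polygon of $N_0$'' for a $k$-gon of $N$ with $k>3$, and no homeomorphism $X\to X_0$ can carry strips to strips and polygons to polygons: a valence-$k$ polygon of $X$, together with its $k$ attached strips, must be spread over a fattened \emph{subtree} of $X_0$ (several triangles joined by squares), and the $k-2$ free sides along which the deeper branches of $X$ attach then land somewhere in the middle of $X_0$'s cell structure. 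Arranging this spreading so that the map is uniformly bilipschitz on those attaching loci --- and on the far boundary sides of the first-layer strips, which are themselves ``other boundary components'' of $X$ on which the theorem demands the uniform constant $K$ --- is precisely the substance of the theorem; your ``model polygon homeomorphism for each pair of valences'' presupposes a layer-to-layer, cell-to-cell correspondence that does not exist, so the key uniformity claim is assumed rather than proved.

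Two secondary points also need repair. First, the complement pieces of the first layer do not attach to $N$ along ``long sides of strips'': strips are glued to polygons only along their short ends, both long sides of every strip lie in $\partial X$, and each complementary branch meets $N$ in exactly one free polygon side of length $2\epsilon$. So the pieces you induct over are rooted fattened trees attached along short segments, and the inductive hypothesis must be stated for a bilipschitz map prescribed on that short root segment (and must also keep track of where the far long sides of the first-layer strips go inside $X_0$, since the theorem constrains them). Second, the ``bounded perturbation to a combinatorially compatible matching'' of $\Phi_0$ is not innocuous: a $K'$-bilipschitz map can bring distinct corners of $\partial_0X$ within distance $1/K'$ of each other, so snapping corner images to nearest corners of $\partial_0X_0$ can collapse or reorder them; one has to either allow many strips of $X$ to map over a single strip of $X_0$ (and vice versa) or reconcile the perturbed map with the original $\Phi_0$ across a collar, since the final $\Phi$ must restrict to $\Phi_0$ exactly. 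These are fixable, but together with the main issue above they mean the proposal is an outline of the expected strategy rather than a proof.
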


\begin{thm}[\cite{whyte}, Theorems 1.1 and~5.1]
\label{k3}
Let $m_1, m_2 \geq 2$ and $n_1, n_2 \geq 3$ be integers. Then there is a quasi-isometry from $T_{m_1, n_1}$ to $T_{m_2, n_2}$ mapping vertices of $T_{m_1, n_1}$ bijectively onto vertices of $T_{m_2, n_2}$. 
\end{thm}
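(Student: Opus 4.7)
The plan is to deduce Whyte's theorem by combining the fact that any two bushy bounded-valence trees are quasi-isometric with the uniformly finite homology machinery of Block--Weinberger, which upgrades quasi-isometries to vertex bijections in the non-amenable setting.

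First I would verify that each $T_{m_i,n_i}$ is non-amenable. A ball of radius $2k$ in $T_{m,n}$ (with $m \geq 2$, $n \geq 3$) contains on the order of $\bigl((m-1)(n-1)\bigr)^k$ vertices with boundary sphere of comparable size, so since $(m-1)(n-1) \geq 2$ the isoperimetric constant is positive. Equivalently, each $T_{m_i,n_i}$ contains a quasi-isometrically embedded copy of the trivalent tree $T_3$, arising as the Cayley graph of a rank-$2$ free subgroup of the free product $\Z/m_i * \Z/n_i$ that acts on $T_{m_i,n_i}$. Next I would construct an initial quasi-isometry $\Phi \colon T_{m_1,n_1}\to T_{m_2,n_2}$. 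A convenient route is to show that each $T_{m_i,n_i}$ is quasi-isometric to $T_3$: collapse each edge joining a valence-$m_i$ vertex to a chosen adjacent valence-$n_i$ vertex, producing a bounded-valence tree with no leaves and all valences at least $3$, and observe that any two such trees are pairwise quasi-isometric by a standard depth-based coarsening.

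To promote $\Phi$ to a bijection on vertices---the content distinguishing Whyte's theorem from a mere quasi-isometry---I would invoke the criterion of Block--Weinberger. Since $T_{m_2,n_2}$ is non-amenable, the uniformly finite zeroth homology $H_0^{\textup{uf}}(T_{m_2,n_2})$ vanishes, so the difference between the pushforward of the vertex-counting $0$-chain on $T_{m_1,n_1}$ and the vertex-counting $0$-chain on $T_{m_2,n_2}$ is the boundary of a uniformly bounded $1$-chain $\beta$. Interpreting $\beta$ as a bounded integer flow and applying an infinite analogue of Hall's marriage theorem---whose hypotheses follow from uniform local finiteness combined with the boundedness of $\beta$---yields a bijection $\Psi$ from $\Phi\bigl(V(T_{m_1,n_1})\bigr)$ to $V(T_{m_2,n_2})$ with uniformly bounded displacement. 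The composition $\Psi\circ \Phi$ is then the required vertex-bijective quasi-isometry.

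The principal obstacle is the marriage step: one must verify Hall's condition for every finite subset of vertices using the structure of $\beta$ and the non-amenability of $T_{m_2,n_2}$ to guarantee enough distinct partners, and then verify that the resulting matching remains bilipschitz rather than merely at bounded displacement. Once these combinatorial technicalities are handled, composing with $\Phi$ preserves the quasi-isometry constants up to an additive error absorbed by the bounded displacement, and the proof is complete.
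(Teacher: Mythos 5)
The paper does not prove this statement at all---it simply cites Whyte---and your sketch is essentially a reconstruction of Whyte's own argument: non-amenability of bushy bounded-valence trees, vanishing of the Block--Weinberger uniformly finite homology $H_0^{\textup{uf}}$, and a Hall-type matching to move a quasi-isometry to a vertex bijection at bounded distance. That route is correct (and note the last worry dissolves: a bijection at bounded distance from a quasi-isometry is automatically a quasi-isometry, which is all the statement as quoted requires), so your proposal matches the cited source's approach rather than anything proved in this paper.
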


\begin{cor}
\label{treeR}
Let $m_1, m_2 \geq 2$ and $n_1, n_2 \geq 3$ be integers and let $K \geq 0$. Every $K$--bilipschitz map from an essential line of $T_{m_1, n_1} \times \RR$ to an essential line of $T_{m_2, n_2} \times \RR$ extends to a quasi-isometry from $T_{m_1, n_1} \times \RR$ to $T_{m_2, n_2} \times \RR$ mapping essential lines of $T_{m_1, n_1} \times \RR$ bijectively onto essential lines of $T_{m_2, n_2} \times \RR$ via $K$--bilipschitz maps. 
\end{cor}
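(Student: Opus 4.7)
The plan is to construct the extension in product form. Identify each essential line $\{v\}\times\R$ isometrically with $\R$ by the second coordinate projection, so that the hypothesized $K$--bilipschitz map $\phi_0\colon\{v_0\}\times\R\to\{w_0\}\times\R$ becomes a $K$--bilipschitz homeomorphism $f\colon\R\to\R$. The extension I shall produce has the form
\[
   \Phi(x,t) = \bigl(\psi(x),\, f(t)\bigr),
\]
where $\psi\colon T_{m_1,n_1}\to T_{m_2,n_2}$ is a vertex-bijective quasi-isometry with $\psi(v_0)=w_0$. Granting such a $\psi$, the conclusions are immediate: $\Phi$ is a quasi-isometry because it is a product of a quasi-isometry and a bilipschitz homeomorphism of the factor spaces; $\Phi$ extends $\phi_0$ by construction; and $\Phi$ sends the essential line $\{v\}\times\R$ bijectively to the essential line $\{\psi(v)\}\times\R$ via the map $t\mapsto f(t)$, which is $K$--bilipschitz.

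The only step that requires care is producing $\psi$ with $\psi(v_0)=w_0$, since Theorem~\ref{k3} supplies only \emph{some} vertex-bijective quasi-isometry $\psi_0$, with no prescribed control on where $v_0$ is sent. I shall adjust $\psi_0$ in two stages. First, the automorphism group of $T_{m_2,n_2}$ acts transitively on vertices of each fixed valence, so after post-composing $\psi_0$ with a suitable tree isometry I may assume that $\psi_0(v_0)$ lies within graph-distance one of $w_0$, with the distance equal to one only when $\psi_0(v_0)$ and $w_0$ have different valences. Second, when $\psi_0(v_0)\neq w_0$, let $u=\psi_0^{-1}(w_0)$ and define $\psi$ to agree with $\psi_0$ except that it exchanges the images of $v_0$ and $u$. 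This modification moves each vertex's image by at most one, so $\psi$ is at bounded sup-distance from $\psi_0$ and hence remains a vertex-bijective quasi-isometry, now satisfying $\psi(v_0)=w_0$.

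The main obstacle is reconciling the flexibility of Theorem~\ref{k3}, which guarantees only existence of a vertex-bijective quasi-isometry, with the bipartite rigidity of the biregular trees at a single prescribed vertex. The crucial observation is that a bounded perturbation of a quasi-isometry is again a quasi-isometry, so the single swap above, together with one tree isometry, suffices to achieve the exact value $\psi(v_0)=w_0$. Extending $\psi$ from the vertex set to the whole tree by geodesic paths on each edge produces the desired quasi-isometry of trees, after which the product formula yields $\Phi$.
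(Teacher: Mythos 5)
Your proof is correct and follows the same route the paper leaves implicit: Corollary~\ref{treeR} is deduced from Whyte's Theorem~\ref{k3} by taking the product of a vertex-bijective quasi-isometry of the trees with the given $K$--bilipschitz map on the $\R$ factor, so that each essential line $\{v\}\times\R$ is carried to $\{\psi(v)\}\times\R$ by that same $K$--bilipschitz map. Your additional care in arranging $\psi(v_0)=w_0$ (post-composing with a tree automorphism, transitively on each valence class, followed by a single swap, which is a bounded perturbation and hence still a vertex-bijective quasi-isometry) correctly supplies the one detail the paper glosses over.
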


 We use the next proposition to piece together quasi-isometries between the vertex spaces in two model spaces. 

\begin{prop}
\label{gluing}
 Suppose $(X,d_1)$ and $(Y,d_2)$ are model spaces of the type given in Construction~\ref{modelspaces}. Suppose $\Psi\colon X \rightarrow Y$ is a map that induces an isomorphism between the underlying trees of $X$ and $Y$; $\Psi$ restricts to a $(K,C)$--quasi-isometry on corresponding vertex spaces of type $T_{m,n} \times \R$; $\Psi$ restricts to a $K$--bilipschitz homeomorphism between corresponding vertex spaces of type $X_i$; and, $\Psi$ restricts to a $K$--bilipschitz homeomorphism between corresponding edge spaces. Then $\Psi$ is an $(L,D)$--quasi-isometry.  
\end{prop}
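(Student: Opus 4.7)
The plan is to use the tree-of-spaces structure to reduce global distance computations in $X$ and $Y$ to sums of distances inside individual vertex spaces, and then to apply $\Psi$ piecewise. Let $T$ denote the underlying tree of $X$, which is identified via $\Psi$ with the underlying tree of $Y$; write $V_t$, $E_e$ for the vertex and edge spaces in $X$ and $V'_t$, $E'_e$ for those in $Y$. Since each edge space is isometrically and convexly embedded in both incident vertex spaces, a standard argument yields the distance formula
\[
   d_X(x_1, x_2) \;=\; \min \; \sum_{j=0}^{k} d_{V_{t_j}}(p_j, p_{j+1}),
\]
valid whenever $x_1 \in V_{t_0}$, $x_2 \in V_{t_k}$, and $t_0, t_1, \ldots, t_k$ is the geodesic in $T$ between them; here $p_0 := x_1$, $p_{k+1} := x_2$, and the minimum is over choices $p_j \in E_{e_j}$ on the intermediate edge spaces. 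The analogous formula holds in $Y$. A key preparatory observation is that there exists a uniform constant $\delta > 0$ such that, in any vertex space of $X$ or of $Y$, two distinct attaching edge spaces are at distance at least $\delta$: there are only finitely many isometry types of vertex space (the fattened trees $X_i \in \mathbb{X}$ together with $T_{m,n} \times \R$), and each admits such a lower bound from its cell structure — in $T_{m,n} \times \R$ distinct essential lines are at distance $\geq 1$, and analogous bounds hold for boundary lines of each $X_i$. Applied to the interior summands of the distance formula, this yields $k \leq d_X(x_1, x_2)/\delta + 1$.

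For the upper bound, I apply the piecewise $(K,C)$--quasi-isometry hypothesis term by term, with the understanding that each bilipschitz piece is in particular a $(K, 0)$--quasi-isometry. Each summand satisfies $d_{V'_{t_j}}\bigl(\Psi(p_j),\Psi(p_{j+1})\bigr) \leq K\, d_{V_{t_j}}(p_j,p_{j+1}) + C$; summing and invoking the bound on $k$ gives
\[
   d_Y\bigl(\Psi(x_1),\Psi(x_2)\bigr) \;\leq\; K\,d_X(x_1,x_2) + C(k+1) \;\leq\; (K + C/\delta)\,d_X(x_1,x_2) + 2C.
\]
For the lower bound, I reverse the procedure. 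Choose optimal points $q_j \in E'_{e_j}$ realizing the minimum in the $Y$-side distance formula for $\Psi(x_1)$ and $\Psi(x_2)$, and define $p_j := \bigl(\Psi|_{E_{e_j}}\bigr)^{-1}(q_j) \in E_{e_j}$, using the hypothesis that $\Psi$ restricts to a bilipschitz \emph{homeomorphism} (hence a bijection) between corresponding edge spaces. Then $\Psi(p_j) = q_j$, and the quasi-isometric lower bound on each $V_{t_j}$ gives $d_{V_{t_j}}(p_j, p_{j+1}) \leq K\, d_{V'_{t_j}}(q_j, q_{j+1}) + KC$. Summing produces
\[
   d_X(x_1, x_2) \;\leq\; \sum_{j=0}^{k} d_{V_{t_j}}(p_j, p_{j+1}) \;\leq\; K\, d_Y\bigl(\Psi(x_1),\Psi(x_2)\bigr) + KC(k+1),
\]
and the analogous bound $k \leq d_Y(\Psi(x_1),\Psi(x_2))/\delta + 1$ coming from the $Y$-side distance formula closes the estimate.

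Quasi-surjectivity is immediate: every point of $Y$ lies in some $V'_t$, and $\Psi|_{V_t}\colon V_t \to V'_t$ is already $C$-quasi-surjective by hypothesis, so $\Psi(X)$ is $C$-dense in $Y$. The subtlest step is the lower bound, where one must produce preimages in $X$ of the optimal crossing points chosen in $Y$; this is precisely why the hypothesis insists that $\Psi$ be \emph{bilipschitz} (hence invertible) on edge spaces rather than merely quasi-isometric. Once this invertibility is in hand, the remainder is routine bookkeeping to package the constants into a single pair $(L, D)$.
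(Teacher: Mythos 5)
Your overall strategy---decompose distances along the tree geodesic, push forward termwise for the upper bound, pull back optimal crossing points through the edge-space homeomorphisms for the lower bound, and control the number of pieces---is exactly the intended argument (the paper leaves the proof as an exercise, pointing to Cashen--Martin), and most of your steps are sound. However, your ``key preparatory observation'' is false as stated, and it fails at precisely the subtlety the remark preceding Proposition~\ref{gluing} warns about. In a vertex space of type $T_{m,n}\times\R$, distinct attaching edge spaces need not be distinct essential lines: by condition (2) of Construction~\ref{modelspaces}, each essential line is identified with a boundary component of $s$ different fattened trees, so whenever $s\geq 2$ (already the case for surface amalgams, where $s=2$) there are $s$ distinct edges of the underlying tree whose edge spaces coincide as one and the same line. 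The distance between two such edge spaces is $0$, so no uniform $\delta$ exists, and the bound $k\leq d_X(x_1,x_2)/\delta+1$---on which both your upper and lower estimates rely in order to absorb the accumulated additive constants $C(k+1)$---is not justified: a geodesic can cross a $T_{m,n}\times\R$ piece with zero contribution by entering and exiting through the same essential line.

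The gap is repairable, and the repair is exactly the point the paper highlights: the underlying tree is bipartite, so along the tree geodesic the $T_{m,n}\times\R$ vertices alternate with fattened-tree vertices; at a fattened-tree vertex distinct incident edges \emph{do} correspond to distinct boundary components (condition (3) of Construction~\ref{modelspaces}), and distinct boundary components of a fattened tree are separated by a positive constant which may be taken uniform since $\XX$ is finite. Hence at least every other interior vertex of the tree geodesic contributes at least some $\delta_0>0$ to the sum, giving $k\leq 2\,d_X(x_1,x_2)/\delta_0+O(1)$, and likewise on the $Y$ side; this is all your estimates need, at the cost of slightly worse constants. With that substitution your argument goes through: the distance formula (which should cite the convexity of vertex and edge spaces asserted in the definition of a geometric graph of spaces, and is harmless whether stated with a minimum or an infimum), the use of invertibility of $\Psi$ on edge spaces for the lower bound, and the quasi-surjectivity step are all fine.
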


We caution the reader that the model spaces here are constructed as a tree of spaces in which adjacent vertex spaces are directly glued together.  Additionally, some of the data in Proposition~\ref{gluing} consists of quasi-isometries of vertex spaces, which do not usually behave as well as bilipschitz maps with respect to gluing operations.  However, the given maps are more tightly controlled bilipschitz maps on the fattened tree vertex spaces, where there is a uniform bound on the distance between incident edge spaces.  The bipartite structure of the underlying tree, over which alternate quasi-isometries with bilipschitz maps, allows one to paste together the maps to produce a global quasi-isometry.
The proof of the proposition is quite similar to the proof of \cite[Proposition~2.16]{cashenmartin}. We leave the details as a routine exercise for the reader.

  Model spaces of type $(0,0,\XX,s)$ and $(1,n,\XX,s)$ are $\delta$--hyperbolic for all $n,s \geq 1$.  Malone and Dani--Thomas have shown that the pair of values $s$ and $n$ are a complete quasi-isometry invariant in the hyperbolic case (see \cite[Theorem~4.14]{malone} and \cite[Theorem~1.5]{danithomas}).

 \begin{thm}\cite{malone} \cite{danithomas}
 \label{hypcase}
  Let $s,s', n, n'$ be integers. Model spaces of type $(0,0,\XX,s)$ and $(0,0,\XX',s')$ are quasi-isometric if and only if $s = s'$. Model spaces of type $(1,n,\XX,s)$ and $(1,n',\XX',s')$ are quasi-isometric if and only if $n=n'$ and $s=s'$. 
 \end{thm}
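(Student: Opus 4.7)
The plan is to treat the two statements in parallel, with each requiring two directions. For the ``if'' directions I will build explicit quasi-isometries between model spaces sharing the given invariants, using the extension theorem of Behrstock--Neumann (Theorem~\ref{bs1}) together with the gluing lemma for graphs of spaces (Proposition~\ref{gluing}). For the ``only if'' directions, I will invoke the existing classifications: Malone's classification of hyperbolic geometric amalgams of free groups \cite[Theorem~4.14]{malone} for the first statement, and the Dani--Thomas classification of hyperbolic right-angled Coxeter groups with generalized $\Theta$--graph nerve \cite[Theorem~1.5]{danithomas} for the second.

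For the forward direction in the $(0,0,\XX,s)$ case, both model spaces have bipartite underlying trees in which each branching line meets exactly $s$ fattened trees, so the underlying abstract trees of spaces are isomorphic as combinatorial objects of uniformly bounded valence. Fix such an isomorphism, along with a compatible basepoint and an initial isometric identification of the edge spaces at the basepoint. Propagate outward over the tree: at each fattened tree vertex space, the map already prescribed on one boundary line extends, by Theorem~\ref{bs1}, to a bilipschitz homeomorphism of the full fattened tree that is uniformly $K$--bilipschitz on every other boundary component. These $K$--bilipschitz boundary maps serve as initial data for the next layer, and the induction yields uniformly bilipschitz maps on all vertex spaces and all edge spaces. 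Proposition~\ref{gluing} then assembles them into a global quasi-isometry. The $(1,n,\XX,s)$ case with $n=n'$ and $s=s'$ proceeds identically, except that corresponding $T_{1,n}\times\R$ vertex spaces are sent to one another by an explicit isometry.

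For the converse, the main point is to reduce each model space to an object for which the cited quasi-isometry classifications apply directly. A model space of type $(0,0,\XX,s)$ is a model geometry for a hyperbolic geometric amalgam of free groups in Malone's sense, with $s$ recording the number of fattened trees branching off each central line; \cite[Theorem~4.14]{malone} then shows that two such amalgams are quasi-isometric if and only if their branching parameters agree. A model space of type $(1,n,\XX,s)$ is quasi-isometric to a model geometry for a hyperbolic right-angled Coxeter group whose generalized $\Theta$--graph nerve has linear degree~$0$ and a distinguished combinatorial structure encoding both $n$ and $s$; the classification of \cite[Theorem~1.5]{danithomas} then yields that both $n$ and $s$ must agree.

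The principal obstacle is the converse direction: one must verify that $s$ and $n$ are genuine intrinsic quasi-isometry invariants, rather than artifacts of the combinatorial description of the model space. The cited classifications accomplish this by identifying, up to coarse equivalence, the branching lines of the model space together with the combinatorial pattern by which fattened trees attach to them, and then by proving that the resulting combinatorial gadget is itself a complete quasi-isometry invariant. The remaining work is bookkeeping: matching the combinatorial data in \cite{malone} and \cite{danithomas} to the parameters $(s)$ and $(n,s)$ appearing in Construction~\ref{modelspaces}, which is straightforward from the definitions.
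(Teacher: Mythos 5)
The paper offers no argument for this statement at all: it is quoted as a combination of \cite[Theorem~4.14]{malone} and \cite[Theorem~1.5]{danithomas}, so the only question is whether your proposed derivation is valid. Your ``if'' direction is fine and is exactly the technique the paper uses elsewhere (propagate over the underlying tree using Theorem~\ref{bs1}, extend bilipschitz line maps over $T_{1,n}\times\R$ as product maps, and assemble with Proposition~\ref{gluing}); the weak point is the ``only if'' direction for spaces of type $(1,n,\XX,s)$.

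There you claim that a model space of type $(1,n,\XX,s)$ is quasi-isometric to the model geometry of a linear-degree-zero generalized $\Theta$--graph Coxeter group carrying ``a distinguished combinatorial structure encoding both $n$ and $s$,'' and that \cite[Theorem~1.5]{danithomas} then forces $n=n'$ and $s=s'$. That theorem provides no such finer invariant: for hyperbolic $W_\Theta$ of linear degree zero the complete quasi-isometry invariant is the number of branches $k$ alone. Moreover, the natural reduction you are implicitly using destroys the pair $(n,s)$: since $T_{1,n}$ is bounded, collapsing each $T_{1,n}\times\R$ piece onto a single essential line (a map of exactly the kind used in the proof of Theorem~\ref{qis}, glued via Proposition~\ref{gluing}) shows that a model space of type $(1,n,\XX,s)$ is quasi-isometric to one of type $(0,0,\,\cdot\,,(n+1)s)$; this is how the paper itself matches groups of $\cC_{1,n}$ with linear-degree-zero Coxeter groups of hyperbolic degree $2n+2$ in Corollary~\ref{cor:main3}, and it is condition (2) of the Dani--Thomas theorem when $n=1$. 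Consequently the only invariant your route can extract is the product $(n+1)s$, not $n$ and $s$ separately: for instance it makes spaces of types $(1,1,\XX,4)$ and $(1,3,\XX',2)$ quasi-isometric (both reduce to type $(0,0,\,\cdot\,,8)$), so the asserted conclusion ``$n=n'$ and $s=s'$'' cannot be deduced this way, and indeed the full ``only if'' with all four parameters varying independently is in conflict with Corollary~\ref{cor:main3}(1). The statement is only ever needed with $s=s'$ fixed (surface amalgams, $s=s'=2$, where Malone's classification yields $n=n'$) or with $n=n'=1$ (the Coxeter case, where Dani--Thomas yields $s=s'$); your argument should be restricted to, and argued directly in, those regimes rather than claiming the general biconditional from \cite{danithomas}.
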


  For $m>1$, a model space of type $(m,n,\XX,s)$ has isometrically embedded flat planes in the $T_{m,n} \times \R$ pieces, and is therefore not $\delta$--hyperbolic. The following theorem proves there are at most three quasi-isometry classes among the non-hyperbolic model spaces. Using the action of groups on these spaces, the three quasi-isometry types are distinguished in Theorem~\ref{racgQI}. 

\begin{thm}[Quasi-isometries between model spaces]
\label{qis}
Let $\XX$ be a finite set of fattened trees. 
\leavevmode
  \begin{enumerate}
  \renewcommand{\theenumi}{\Roman{enumi}}
  \item
  \label{item:qis:isolate}
  For all integers $m \geq 2$, $n \geq 3$, and $s \geq 1$, a model space of type $(m,n,\XX,s)$ is quasi-isometric to the standard model space of type $(3,3,1)$. 
  \item
  \label{item:qis:mixed}
  For all $s \geq 1$, a model space of type $(2,2,\XX,s)$ is quasi-isometric to the standard model space of type $(2,2,1)$.
  \item
  \label{item:qis:Seifert}
  For all integers $m \geq 2$ and $n \geq 3$, the model space of type $(m,n,0)$ is quasi-isometric to the model space of type $(2,3,0)$. 
 \end{enumerate}
\end{thm}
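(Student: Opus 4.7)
The plan is to treat Part~(\ref{item:qis:Seifert}) as an immediate application of the extension result for biregular tree pieces, and to reduce Parts~(\ref{item:qis:isolate}) and~(\ref{item:qis:mixed}) to a layer-by-layer construction of a map between tree-of-spaces decompositions that is then assembled via Proposition~\ref{gluing}. For Part~(\ref{item:qis:Seifert}), a model space of type $(m,n,0)$ is by definition isometric to $T_{m,n}\times\R$, so any bilipschitz identification of one essential line with an essential line of $T_{2,3}\times\R$ extends by Corollary~\ref{treeR} to the desired quasi-isometry.

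For Part~(\ref{item:qis:isolate}), fix $Y$ of type $(m,n,\XX,s)$ with $m\geq 2$, $n\geq 3$, $s\geq 1$, and let $Z$ be the standard model space of type $(3,3,1)$. I would construct a map $\Psi\colon Y\to Z$ inductively on the layers of the tree-of-spaces structure. First I would choose a base $T_{m,n}\times\R$ piece of $Y$ and a base $T_{3,3}\times\R$ piece of $Z$, and apply Corollary~\ref{treeR} to produce a quasi-isometry between them sending essential lines bijectively to essential lines via $K$-bilipschitz homeomorphisms. For each essential line $\ell$ of the base in $Y$, I would bundle the $s$ fattened trees from $\XX$ glued along $\ell$ into a single subspace $X_\ell$; because its underlying graph is obtained by identifying $s$ trees of uniformly bounded valence along a common boundary, $X_\ell$ is itself a fattened tree. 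The essential line in $Z$ matched with $\ell$ has a single standard fattened tree $X_0$ attached, and Theorem~\ref{bs1} extends the chosen bilipschitz map on $\ell$ to a bilipschitz homeomorphism $X_\ell\to X_0$ that is uniformly $K$-bilipschitz on every other boundary component. Each of those boundary components lies on an essential line of a new $T_{m,n}\times\R$ piece of $Y$, and Corollary~\ref{treeR} again extends the prescribed bilipschitz data there to the corresponding $T_{3,3}\times\R$ piece of $Z$. Iterating through successive layers produces $\Psi$, and Proposition~\ref{gluing} upgrades the layer-wise bilipschitz and quasi-isometric control into a global quasi-isometry. For Part~(\ref{item:qis:mixed}) the same template applies with $T_{2,2}\times\R\cong\E^2$ replacing $T_{m,n}\times\R$; since Corollary~\ref{treeR} does not apply when one tree valence is $2$, one instead uses a direct affine self-map of $\E^2$ that carries essential lines bijectively to essential lines and realizes any prescribed bilipschitz map on the chosen essential line.

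The main obstacle I anticipate is arranging that the underlying trees of the two decompositions are isomorphic, as Proposition~\ref{gluing} demands. In the source, each essential line of a $T_{m,n}\times\R$ piece carries $s$ attached fattened trees, whereas in the target it carries exactly one. The bundling step is precisely the device that removes this discrepancy: after repackaging, both underlying trees are bipartite with countably infinite valence at every vertex and are therefore abstractly isomorphic, and the inductive construction builds the required isomorphism simultaneously with the map. The technical point to verify is that the bundle $X_\ell$ really satisfies the fattened-tree conditions with $\ell$ as a distinguished boundary component and with valence bound depending only on $\XX$ and $s$, so that the constant in Theorem~\ref{bs1} is uniform across all essential lines encountered during the induction.
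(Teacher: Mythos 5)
Your Part~(\ref{item:qis:Seifert}) and your overall skeleton (extend over pieces using Corollary~\ref{treeR} and Theorem~\ref{bs1}, then assemble with Proposition~\ref{gluing}) agree with the paper, but the step you introduce to absorb the discrepancy in the parameter $s$ --- bundling the $s$ fattened trees attached to an essential line $\ell$ into a single subspace $X_\ell$ and declaring it to be a fattened tree with $\ell$ as a boundary component --- is where the argument breaks, and this is exactly the delicate point of the theorem. For $s\geq 3$ the union of $s$ fattened trees glued along a common boundary line is not a $2$--manifold (it branches along $\ell$), so it is not homeomorphic to any fattened tree and the asserted ``bilipschitz homeomorphism $X_\ell\to X_0$'' cannot exist. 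For $s=2$ the union is a surface, but $\ell$ lies in its interior, whereas the essential line of the target you want to match it with is a \emph{boundary} component of the single attached copy of $X_0$; a homeomorphism cannot send an interior line to a boundary line. In either case Theorem~\ref{bs1} does not apply, since its hypothesis is a bilipschitz map defined on a boundary component of a fattened tree, and $\ell$ is not a boundary component of $X_\ell$. Weakening ``bilipschitz homeomorphism'' to ``quasi-isometry'' on the bundled pieces does not rescue the induction either: Proposition~\ref{gluing} is stated for decompositions as in Construction~\ref{modelspaces} and requires genuine bilipschitz homeomorphisms on the fattened-tree vertex spaces (the remark preceding the proof explains why mere quasi-isometries on those pieces do not glue well), so with the bundled decomposition you would need a new gluing argument that you have not supplied.

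The paper changes $s$ by a different device, in two steps. First it maps a model space of type $(m,n,\XX,s)$ to the standard model space of type $(s+2,s+2,s)$, keeping $s$ fixed; then the underlying trees correspond piece-for-piece and the layer-by-layer extension via Corollary~\ref{treeR}, Theorem~\ref{bs1}, and Proposition~\ref{gluing} --- essentially your induction --- goes through. Second, it passes from type $(3,3,1)$ to type $(s+2,s+2,s)$ by an explicit map on the tree factor that collapses disjoint paths of length $s-1$ in $T_{3,3}$ (respectively intervals in $T_{2,2}$ for Part~(\ref{item:qis:mixed})), so that $s$ essential lines, each carrying one copy of $X_0$, are identified to a single essential line carrying $s$ copies of $X_0$; the fattened trees are then matched by the identity and no bundling is needed. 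To repair your direct approach you would have to replace the bundling step by some such collapsing map on the $T_{m,n}\times\R$ (or $\E^2$) pieces; as written, the induction cannot even begin once $s\geq 2$.
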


\begin{rem}
In the case of surface amalgams, the parameter $s$ is always equal to $2$.  Thus we do not need to change $s$ in order to get quasi-isometries between their model spaces.
In this special case, the proof of Theorem~\ref{qis} follows easily by combining Theorem~\ref{bs1} and Corollary~\ref{treeR}.
However, in order to construct quasi-isometries between right-angled Coxeter groups,
we must deal with the more subtle effect of changing the value of $s$.
\end{rem}

\begin{proof} Let $\XX$ be a finite set of fattened trees. 
To prove (\ref{item:qis:isolate}), we show that each model space $Y$ of type $(m,n,\XX,s)$ is quasi-isometric to the standard model space $Y'$ of type $(s+2, s+2, s)$, and that $Y'$ is quasi-isometric to the standard model space $Y_0$ of type $(3,3,1)$.
 
 Recall that $X_0$ denotes the standard fattened tree. Since $\XX$ is a finite set, there exist constants $K_0, K_1 \geq 1$ so that any $K_0$--bilipschitz homeomorphism from a boundary component of $X_i$ to a boundary component of $X_0$ extends to an $K_1$--bilipschitz homeomorphism from $X_i$ to $X_0$ that is $K_0$--bilipschitz on every other boundary component by Theorem~\ref{bs1}.  By Corollary~\ref{treeR}, there exist constants $K_2 \geq 1$ and $C \geq 0$ so that every $K_2$--bilipschitz homeomorphism from an essential line of $T_{m,n} \times \RR$ to $T_{s+2,s+2} \times \RR$ extends to an $(K_2, C)$--quasi-isometry from $T_{m,n} \times \RR$ to $T_{s+2,s+2} \times \RR$. Let $K = \max\{K_0, K_1, K_2\}$.

 Define a map $\Psi$ from $Y$ to $Y'$ recursively as follows. Map one piece $T_{m,n} \times \RR$ of $Y$ to one piece $T_{s+2,s+2} \times \RR$ of $Y'$ by a $(K,C)$--quasi-isometry that is $K_0$--bilipschitz on each essential line of $T_{m,n} \times \RR$. Along each essential line of $T_{m,n} \times \RR$ the $K_0$--bilipschitz map can be extended to a $K$--bilipschitz map from the copy of $X_i$ incident to this line to the $i^{th}$ copy of $X_0$ incident to the image of the line in $T_{s+2,s+2} \times \R$ that is $K_0$--bilipschitz on every boundary component of $X_i$ for each $1 \leq i \leq s$. Similarly, along each other boundary line of the $X_i$ incident to an essential line of the original copy of $T_{m,n} \times \RR$, the map can be extended over its neighboring copies of $X_i$ and $T_{m,n} \times \R$. Continuing in this way exhausts all vertex spaces in the geometric graph of spaces and defines $\Psi\colon Y \rightarrow Y'$. Since $Y$, $Y'$, and $\Psi$ satisfy the conditions of Proposition~\ref{gluing}, it follows that $\Psi$ is a quasi-isometry.

  We now show $Y'$ is quasi-isometric to $Y_0$. First, define a map $\phi\colon T_{3,3} \times \R \rightarrow T_{s+2,s+2} \times \R$ as follows. Choose a base vertex $v$ in $T_{3,3}$ and a path of length $s-1$ based at $v$. Along each vertex $w$ of the path, construct a new path of length $s-1$ based at each vertex at distance $1$ from $w$ that is not in the original path. Continue this recursively, to select a set of paths of length $s-1$ in $T_{3,3}$, and collapse each path to a point to obtain the regular tree $T_{s+2, s+2}$ as pictured in Figure~\ref{figure:T_{2,2}}. Extend this map to a $(s,s)$--quasi-isometry $\Phi\colon T_{3,3} \times \R \rightarrow T_{s+2, s+2} \times \R$ so that the map is an isometry on the $\R$ factor. Then, $\phi$ maps strips isometric to $[0,s-1] \times \R$ onto the $\R$ factor, where the strips are at distance one from each other, and $s$ essential lines are identified under $\phi$. Define $\Phi\colon Y' \rightarrow Y_0$ by applying $\phi$ to each piece of type $T_{3,3} \times R$ and the identity on each piece of type $X_i$. Since $Y'$, $Y_0$, and $\Phi$ satisfy the conditions of Proposition~\ref{gluing}, the map $\Phi$ is a quasi-isometry. Therefore, $Y$ is quasi-isometric to $Y_0$, proving (\ref{item:qis:isolate}).
 
   \begin{figure} 
 \centering
 \includegraphics[scale=.8]{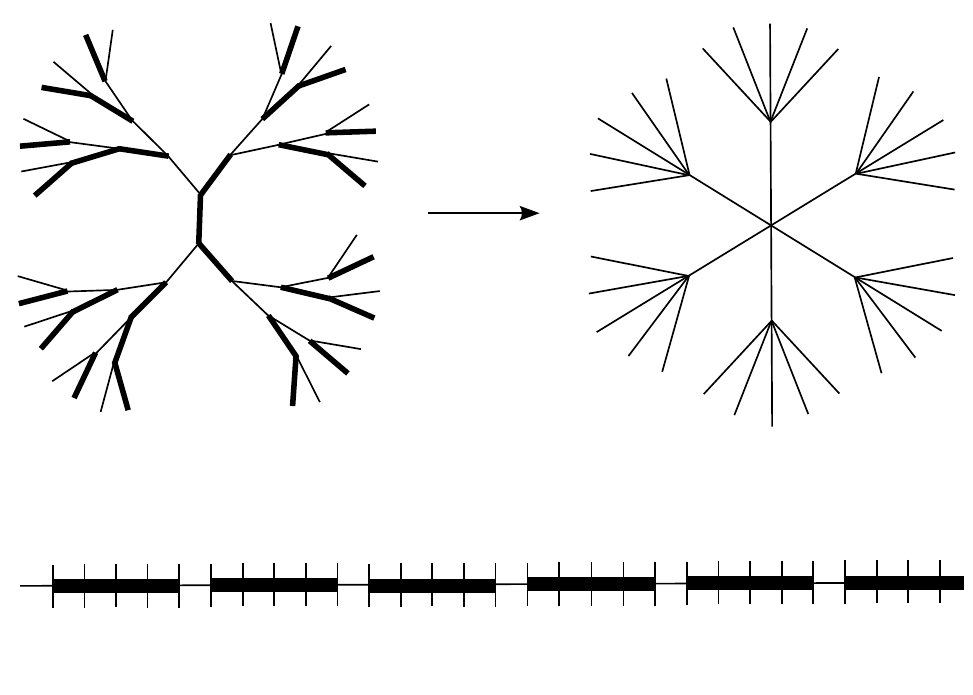}
 \caption{{\small In the top trees, a quasi-isometry from $T_{3,3}$ to $T_{6,6}$ is given by collapsing paths of length three, as drawn in thick lines.  Below, collapsing each path drawn in thick lines defines a quasi-isometry $T_{2,2}$ to $T_{2,2}$.}}
 \label{figure:T_{2,2}}
 \end{figure}

 To prove (\ref{item:qis:mixed}), let $Z$ be a model space of type $(2,2,\XX,s)$, let $Z'$ be the standard model space of type $(2,2,s)$, and let $Z_0$ be the standard model space of type $(2,2,1)$.  It follows immediately from the above argument that $Z$ is quasi-isometric to $Z'$.

 To complete the proof of Theorem~\ref{qis}, we must prove $Z'$ is quasi-isometric to $Z_0$. We first define a quasi-isometry $\phi$ from $T_{2,2} \times \R$ to $T_{2,2} \times \R$ as follows. Identify $T_{2,2,}$ with $\R$ via an isometry mapping vertices of $T_{2,2}$ onto $\Z$. Map $T_{2,2}$ to $T_{2,2}$ by sending the interval $\bigl[ ks, (k+1)s-1 \bigr]$ to the point $k$ and the interval $\bigl[ (k+1)s-1, (k+1)s \bigr]$ to the interval $[k, k+1]$ as pictured in Figure~\ref{figure:T_{2,2}}. Extend this map to a map $\phi\colon T_{2,2} \times \R \rightarrow T_{2,2} \times \R$ so that the map is an isometry on the $\R$ factor. Then the map $\phi$, which collapses parallel strips isometric to $[0, s-1] \times \R$ onto the $\R$ factor, where the strips are at distance one from each other, is an $(s,s)$--quasi-isometry. Moreover, $s$ essential lines are identified under this map. Thus, we may define  $\Phi\colon Z' \rightarrow Z_0$ by applying $\phi$ to each piece in $Z'$ of type $T_{2,2} \times \R$. Since $Z'$, $Z_0$, and $\Phi$ satisfy the conditions of Proposition~\ref{gluing}, the map $\Phi$ is a quasi-isometry. Therefore, $Z$ is quasi-isometric to $Z_0$ concluding the proof of (\ref{item:qis:mixed}).
 
The proof of (\ref{item:qis:Seifert}) follows immediately from Corollary~\ref{treeR}. 
 \end{proof}

\subsection{Quasi-isometry classification of right-angled Coxeter groups}
\label{sec:QIRACG}

The right-angled Coxeter group $W_{\Theta}$ is $\delta$--hyperbolic if and only if the linear degree of $\Theta$ is less than or equal to one.  The complete quasi-isometry classification in this setting is given by Dani--Thomas. We state their theorem here for completeness. 

\begin{thm} \cite[Theorem~1.5]{danithomas}
 Let $\Theta = \Theta(n_1, \ldots, n_k)$ and $\Theta' = \Theta(n_1', \ldots, n_k')$ be generalized $\Theta$--graphs with $n_i, n_i' \geq 2$ for $i \geq 2$, and assume $n_1 \leq n_1'$. The groups $W_{\Theta}$ and $W_{\Theta'}$ are quasi-isometric if and only if one of the following three conditions holds:
 \begin{enumerate}
  \item $n_1 = n_1' =1$ and $k=k'$; or,
  \item $n_1 = 1$, $n_1' \geq 2$ and $k' = 2(k-1)$; or,
  \item $n_1 \geq 2$, $n_1' \geq 2$ and $k=k'$. 
  \end{enumerate}
\end{thm}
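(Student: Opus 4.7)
The plan is to work with the Davis-complex model geometries of $W_\Theta$ and $W_{\Theta'}$ from Construction~\ref{racgmodel}, noting that when $n_1 \leq 1$ for both graphs the groups are $\delta$-hyperbolic. The model space $Y_\Theta$ is a tree of spaces whose vertex spaces alternate between fattened trees (one for each essential path of degree $\geq 2$) and branching pieces. When the linear degree $\ell$ equals zero, each branching piece is a bare line with $k$ fattened trees attached along it. When $\ell = 1$, a linear essential path of degree one contributes a Euclidean strip joining two adjacent branching lines, so each branching line is incident to one strip and $k-1$ fattened trees. This combinatorial description organizes both the construction of quasi-isometries and the choice of invariants.

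For the forward direction I would construct quasi-isometries using the same strategy as in Theorem~\ref{qis}. In case (3) the two model spaces have identical tree-of-spaces combinatorics, each branching line being incident to $k$ fattened trees. Using Theorem~\ref{bs1}, I would extend a bilipschitz match on one boundary line of a chosen fattened tree to a bilipschitz map of the whole tree, propagate the resulting match on its other boundary lines into neighboring fattened trees, and exhaust the tree of spaces inductively. An argument of the form of Proposition~\ref{gluing} then packages the pasted map into a global quasi-isometry. Case (1) is identical after also matching linear strips to linear strips.

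Case (2) is the most interesting. The key observation is that each linear strip in $Y_\Theta$ can be collapsed quasi-isometrically to a line; performing this simultaneously on every strip merges each pair of strip-adjacent branching lines into a single line whose fattened-tree valence is the sum of the two original valences, namely $2(k-1)$. When $k' = 2(k-1)$, the resulting quotient has exactly the combinatorial structure of $Y_{\Theta'}$, so the reduction to case (3) completes the construction.

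For the backward direction I would extract a quasi-isometry invariant from the visual boundary of $W_\Theta$. The branching lines give rise to canonical local cut pairs in $\partial W_\Theta$, and the number of complementary components at each such cut pair records the fattened-tree valence at the (possibly merged) branching line: namely $k$ in the $\ell = 0$ case, and $2(k-1)$ in the $\ell = 1$ case after accounting for the strip. The Bowditch JSJ decomposition of the one-ended hyperbolic group $W_\Theta$ provides a canonical, hence quasi-isometry invariant, encoding of these data. I expect the main obstacle to be making this cut-pair valence precise enough to handle the subtle interaction between linear strips and branching valence in the $\ell = 1$ case; verifying that the strip really does \emph{double} the visible valence at the relevant boundary cut pair is the key technical step and the place where case (2) is forced out of the analysis.
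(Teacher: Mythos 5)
First, a point of orientation: the paper does not prove this statement at all --- it is quoted from Dani--Thomas \cite{danithomas} ``for completeness,'' and the only internal machinery it could be measured against is the Appendix (Theorem~\ref{bs1}, Corollary~\ref{treeR}, Proposition~\ref{gluing}, Theorem~\ref{qis}, and the citation of Theorem~\ref{hypcase} to \cite{malone,danithomas}). Judged on its own terms, your sufficiency direction is essentially sound and runs parallel to the paper's Appendix constructions: by Construction~\ref{racgmodel} the model space is of type $(0,0,\XX,k)$ when $n_1\geq 2$ and of type $(1,1,\XX,k-1)$ when $n_1=1$; cases (1) and (3) follow by the same propagation of Theorem~\ref{bs1} through the tree of spaces, packaged by Proposition~\ref{gluing}, as in the proof of Theorem~\ref{qis}; and your strip-collapsing map for case (2) does give a quasi-isometry from a type $(1,1,\XX,k-1)$ space to a type $(0,0,\XX,2(k-1))$ space, provided you add the (omitted but routine) estimate that a quotient path can force only boundedly many strip crossings per unit length, because consecutive crossings are separated by a full traverse of a fattened tree between distinct boundary lines, which has length bounded below.

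The genuine gap is in the necessity direction, which after your reduction amounts to the ``only if'' of the statement that type $(0,0,\XX,s)$ and $(0,0,\XX',s')$ spaces are quasi-isometric only when $s=s'$ (the hard half of Theorem~\ref{hypcase}, which the paper also only cites). Your plan --- count complementary components of $\partial W_\Theta$ at the pole pair of a branching line and appeal to the canonicity of Bowditch's JSJ --- is indeed the route actually taken in \cite{danithomas}, but as written it is a program, not a proof. You would need to (i) show that for a type $(0,0,s)$ model each wing's limit set minus the two poles is connected and that distinct wings lie in distinct components, so the count at such a pair is exactly $s$; and (ii) control \emph{all} cut pairs with at least three complementary components in the target (equivalently, identify Bowditch's JSJ tree, verify the branching-line stabilizers are precisely its $2$--ended vertices and the fattened-tree pieces are hanging Fuchsian, and read off the valence), since a boundary homeomorphism need not a priori send pole pairs to pole pairs. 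Neither step is supplied. Moreover, the step you flag as the crux --- verifying directly that the linear strip ``doubles the visible valence'' when $n_1=1$ --- is not where the difficulty lies: once your case (2) collapse is established, the $n_1=1$ group is quasi-isometric to a type $(0,0,2(k-1))$ model and the doubling is automatic; the real work is items (i)--(ii) for the $(0,0,s)$ models.
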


In particular, it follows from the previous theorem that there are infinitely many quasi-isometry classes of hyperbolic right-angled Coxeter groups with generalized $\Theta$--graph nerves.
We prove that in the non-hyperbolic case there are exactly three quasi-isometry classes. To distinguish the three quasi-isometry classes, we use a result of Caprace \cite{caprace,caprace-erra} that implies $W_{\Theta}$ is hyperbolic relative to the subgroup generated by the linear part of $\Theta$.
The proof then follows by a similar strategy as Kapovich--Leeb's proof that nongeometric $3$--manifolds with different types of geometric pieces cannot be quasi-isometric \cite{kapovichleeb}.

\begin{thm}[Quasi-isometric classification of non-hyperbolic right-angled Coxeter groups with generalized $\Theta$-graph nerve]
\label{racgQI} 
Let $\Theta$ and $\Theta'$ be generalized $\Theta$--graphs with linear degree $\ell \geq 2$ and $\ell' \geq 2$, respectively, and hyperbolic degree $h \geq 0$ and $h'\geq 0$, respectively.  Then the right-angled Coxeter groups $W_{\Theta}$ and $W_{\Theta'}$ are quasi-isometric if and only if one of the following three conditions holds:
  \begin{enumerate}
   \item $\ell = \ell' = 2$ and $h,h'\geq 1$; or,
   \item $\ell,\ell' \geq 3$ and $h,h'\geq 1$; or,
   \item $\ell,\ell' \geq 3$, and $h=h'=0$.
  \end{enumerate}
\end{thm}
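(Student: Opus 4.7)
The plan is to establish the positive direction using the model geometries together with Theorem~\ref{qis}, and to separate the three cases via Caprace's relatively hyperbolic structure combined with the quasi-isometric rigidity theorem of \Drutu--Sapir. By Construction~\ref{racgmodel}, $W_\Theta$ acts geometrically on a model space of type $(\ell,\ell,\XX,h)$ in the sense of Construction~\ref{modelspaces}, where $\ell$ and $h$ are the linear and hyperbolic degrees of $\Theta$. For the forward direction: in case~(1) such spaces have the form $(2,2,\XX,h)$ with $h\geq 1$, and are identified up to quasi-isometry with the standard model of type $(2,2,1)$ by part~(\ref{item:qis:mixed}) of Theorem~\ref{qis}; in case~(2) they have the form $(\ell,\ell,\XX,h)$ with $\ell\geq 3$ and $h\geq 1$, and are identified with the standard model of type $(3,3,1)$ by part~(\ref{item:qis:isolate}); in case~(3) they are $T_{\ell,\ell}\times\R$ with $\ell\geq 3$, and are identified with $T_{2,3}\times\R$ by part~(\ref{item:qis:Seifert}).

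For the converse direction, I would first invoke Caprace's theorem \cite{caprace,caprace-erra} to conclude that $W_\Theta$ is hyperbolic relative to the parabolic subgroup $W_{\Theta_L}$ generated by the vertices of the linear part of $\Theta$. The graph $\Theta_L$ is precisely the complete bipartite graph $K_{2,\ell}$, equivalently the graph join of the two essential vertices $\{a,b\}$ with the $\ell$ interior vertices. Since a join of defining graphs corresponds to a direct product of Coxeter groups,
\[
  W_{\Theta_L} \,\cong\, D_\infty \,\times\, \bigl(\Z/2\bigr)^{\ast\ell}.
\]
Hence $W_{\Theta_L}$ is virtually $\Z^2$ when $\ell=2$ and virtually $F_{\ell-1}\times\Z$ when $\ell\geq 3$; in either case it is thick of order zero, and in particular is not itself properly relatively hyperbolic, verifying the ``unbent peripheral'' hypothesis required by \Drutu--Sapir.

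I would then distinguish the three cases by two applications of \cite{drutusapir}. First, the property of being properly relatively hyperbolic with unbent peripherals is a quasi-isometry invariant: in case~(3) one has $\Theta=\Theta_L$ so $W_\Theta=W_{\Theta_L}$ is virtually $F_{\ell-1}\times\Z$ and thick of order zero (not properly relatively hyperbolic), whereas in cases~(1) and~(2) the parabolic $W_{\Theta_L}$ is a \emph{proper} subgroup and $W_\Theta$ is properly relatively hyperbolic; this separates case~(3) from the other two. Second, the quasi-isometry class of the peripheral subgroup is itself a quasi-isometry invariant. In case~(1) the peripheral is quasi-isometric to $\R^2$ (two ends, polynomial growth), while in case~(2) it is quasi-isometric to $T\times\R$ for an infinite regular tree $T$ (infinitely many ends, exponential growth); these peripheral spaces are plainly not quasi-isometric, so cases~(1) and~(2) are distinguished.

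The principal technical subtlety lies in verifying that $W_{\Theta_L}$ forms an unambiguous peripheral structure, i.e., that it is not itself nontrivially relatively hyperbolic. Given the direct product decomposition above, $W_{\Theta_L}$ is virtually $\Z^2$ or virtually $F\times\Z$, and both of these groups are thick of order zero and therefore admit no proper relatively hyperbolic structure; this routine verification unlocks the full rigidity package of \Drutu--Sapir and completes the argument.
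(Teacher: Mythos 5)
Your proof is correct and follows the same overall strategy as the paper: the forward direction is identical (Construction~\ref{racgmodel} plus the three parts of Theorem~\ref{qis}), and the converse rests on the same two pillars, Caprace's relative hyperbolicity of $W_\Theta$ with respect to $W_{\Theta_L}$ and the \Drutu--Sapir rigidity machinery. The difference is in how you finish. The paper stays at the level of asymptotic cones: it uses only \cite[Theorem~1.11]{drutusapir} (no cut points in the cones $\R^2$ or $T\times\R$ of $W_{\Theta_L}$, following \cite{kapovichleeb}) to separate $h=0$ from $h\geq 1$, and only the quasi-isometric \emph{embedding} statement \cite[Theorem~1.7]{drutusapir} plus the non-embeddability of $T\times\R$ into $\R^2$ to separate $\ell=2$ from $\ell\geq 3$. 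You instead identify $\Theta_L\cong K_{2,\ell}$, compute $W_{\Theta_L}\cong D_\infty\times(\Z/2)^{*\ell}$ (correct), and invoke two packaged invariance statements: that being properly relatively hyperbolic with unconstricted peripherals is a quasi-isometry invariant, and that the quasi-isometry class of the peripheral is itself an invariant. Both statements are true, but the second is strictly stronger than what \cite[Theorem~1.7]{drutusapir} literally gives; to use it you should either cite the peripheral-rigidity results of Behrstock--\Drutu--Mosher, or note the standard upgrade (apply Theorem~1.7 to the quasi-isometry and to a coarse inverse, and use that distinct peripheral cosets have bounded coarse intersection to see the restriction is a quasi-isometry onto a neighborhood of a single coset). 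Alternatively, the embedding version already suffices for your dichotomy, since a group quasi-isometric to $F_{\ell-1}\times\Z$ cannot even quasi-isometrically embed into a virtually $\Z^2$ group, by growth. Two trivial slips: virtually $\Z^2$ is one-ended, not two-ended (harmless, since growth already distinguishes the peripherals), and in the case $h=h'=0$ you should note that $\ell,\ell'\geq 3$ is automatic because a generalized $\Theta$--graph has $k\geq 3$, so condition~(3) indeed holds.
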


 \begin{proof}
   Let $\Theta$ and $\Theta'$ be generalized $\Theta$--graphs  with linear degree $\ell \geq 2$ and $\ell'\geq 2$, respectively, and hyperbolic degree $h \geq 0$ and $h' \geq 0$, respectively.  By Construction~\ref{racgmodel}, $W_{\Theta}$ and $W_{\Theta'}$ are quasi-isometric to model spaces of type $(\ell, \ell,\XX, h)$ and $(\ell', \ell',\XX', h')$ respectively. Thus, by Theorem~\ref{qis}, if conditions (1), (2), or~(3) hold, $W_{\Theta}$ and $W_{\Theta'}$ are quasi-isometric. 
 
   Suppose conversely that $W_{\Theta}$ and $W_{\Theta'}$ are quasi-isometric. Let $\Theta_L \subset \Theta$ and $\Theta_{L}' \subset \Theta'$ denote the linear part of $\Theta$ and $\Theta'$, respectively. By  \cite[Theorem~A$'$]{caprace-erra}, the groups $W_{\Theta}$ and $W_{\Theta'}$ are hyperbolic relative to the right-angled Coxeter groups $W_{\Theta_L} \leq W_{\Theta}$ and $W_{\Theta_L'} \leq W_{\Theta'}$. 

Groups satisfying Condition (1) have a structure similar to $3$--manifolds whose geometric decomposition has only hyperbolic pieces.  On the other hand, groups satisfying (2) are similar to $3$--manifolds having both hyperbolic and Seifert fibered pieces.
Finally case (3) involves groups that resemble Seifert manifolds.
We proceed as in \cite{kapovichleeb} to show that the groups with different geometric types cannot be quasi-isometric to each other.

The group $W_{\Theta_L}$ acts properly discontinuously and cocompactly by isometries on $T_{\ell} \times \R$ where $T_{\ell}$ denotes a regular tree of valance $\ell$. If $\ell=2$ then every asymptotic cone of $W_{\Theta_L}$ is  homeomorphic to $\R^2$ and if $\ell>2$ every asymptotic cone of $W_{\Theta_L}$ is homeomorphic to $T \times \R$, where $T$ is a tree with nontrivial branching (see \cite[\S 3.3]{kapovichleeb}). In particular, neither asymptotic cone has a global cut-point. By \cite[Theorem~1.11]{drutusapir} such a group does not admit a proper relatively hyperbolic structure.  In fact, such a group cannot be quasi-isometric to any properly relatively hyperbolic group, since quasi-isometries induce homeomorphisms of asymptotic cones \cite[Proposition~3.12]{kapovichleeb}.
In particular, if $h=0$ then $h'=0$ and (3) holds.  We now assume that $h$ and $h'$ are nonzero.

To complete the proof, suppose by way of contradiction that $\ell=2$ and $\ell'\ge 3$
and suppose $\Phi\colon W_{\Theta'} \to W_\Theta$ is a quasi-isometry.  Then, by \cite[Theorem~1.7]{drutusapir}, $\Phi$ induces a quasi-isometric embedding of $W_{\Theta_L'}$ into $W_{\Theta_L}$, which induces a topological embedding of asymptotic cones $T \times \R \hookrightarrow\R^2$, where $T$ is a tree with nontrivial branching. Since such an embedding does not exist, we see that the combination $\ell=2$ and $\ell'\ge 3$ is impossible.  The only remaining possibility is that either (1) or (2) holds.
\end{proof}

%\clearpage
 
\subsection{Quasi-isometry classification of surface group amalgams}
\label{sec:QISurfaceAmalgam}

\begin{thm}[Quasi-isometric classification of groups in $\mathcal{C}$]
\label{main1}
Let $G \in \mathcal{C}_{m,n}$ and $G' \in \mathcal{C}_{m',n'}$. Then $G$ and $G'$ are quasi-isometric if and only if one of the following conditions hold:
\begin{enumerate}
\item $m=m'=1$ and $n=n'$; or, 
\item $m=m'=n=n'=2$; or, 
\item $m\geq 2, n\geq 3, m'\geq 2$ and $n'\geq 3$. 
\end{enumerate}
\end{thm}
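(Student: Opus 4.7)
The plan is to assemble three ingredients already developed in the paper: the model geometries from Construction~\ref{surfmodel}, the quasi-isometries between model spaces constructed in Theorem~\ref{qis}, and the commensurability of each $G\in\mathcal{C}$ with a right-angled Coxeter group in $\mathcal{W}$ from Theorem~\ref{3mancovers} together with the quasi-isometry classification of $\mathcal{W}$ in Theorem~\ref{racgQI}; the hyperbolic subcase additionally uses Theorem~\ref{hypcase}.

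For the ``if'' direction, Construction~\ref{surfmodel} shows that each $G\in\mathcal{C}_{m,n}$ acts properly and cocompactly by isometries on a model space of type $(m,n,\mathbb{X},2)$, so it suffices to exhibit a quasi-isometry between the corresponding model spaces in each of the three listed cases. In case~(1) this is immediate from Theorem~\ref{hypcase}. In case~(2), part~(\ref{item:qis:mixed}) of Theorem~\ref{qis} shows that any model space of type $(2,2,\mathbb{X},2)$ is quasi-isometric to the standard model of type $(2,2,1)$. In case~(3), part~(\ref{item:qis:isolate}) of Theorem~\ref{qis} shows that any model space of type $(m,n,\mathbb{X},2)$ with $m\geq 2$ and $n\geq 3$ is quasi-isometric to the standard model of type $(3,3,1)$. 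In each case $G$ and $G'$ share a common quasi-isometry model, so they are quasi-isometric.

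For the ``only if'' direction, I first observe that $G\in\mathcal{C}_{m,n}$ is word hyperbolic if and only if $m=1$: when $m\geq 2$ the subgroup $\langle a,b\rangle$ contains a $\mathbb{Z}^2$ realised inside the $T_{m,n}\times\mathbb{R}$ vertex space, whereas for $m=1$ the tree $T_{1,n}$ is a finite star and the whole model space is a Gromov-hyperbolic tree of hyperbolic planes. Hence $G$ and $G'$ cannot be quasi-isometric when exactly one of $m,m'$ is $1$. When $m=m'=1$, the model spaces have type $(1,n,\mathbb{X},2)$ and $(1,n',\mathbb{X}',2)$ with the parameter $s=2$ in common, so Theorem~\ref{hypcase} forces $n=n'$, giving case~(1). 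When $m,m'\geq 2$, I pass via Theorem~\ref{3mancovers} to commensurable, and therefore quasi-isometric, right-angled Coxeter groups $W_\Theta, W_{\Theta'}\in\mathcal{W}$ whose linear and hyperbolic degrees are $\ell=2(m-1)(n-1)$, $h=2(m+n)$ and $\ell'=2(m'-1)(n'-1)$, $h'=2(m'+n')$ respectively. For $(m,n)=(2,2)$ one has $\ell=2$, $h=8$; for $m\geq 2$ with $n\geq 3$ one has $\ell\geq 4$, $h\geq 10$. These fall into the distinct cases~(1) and~(2) of Theorem~\ref{racgQI}, so a group in $\mathcal{C}_{2,2}$ cannot be quasi-isometric to one in $\mathcal{C}_{m',n'}$ with $n'\geq 3$, completing the classification.

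The main obstacle is conceptual rather than computational: the delicate distinction between the two non-hyperbolic families is hidden inside Theorem~\ref{racgQI}, whose proof combines Caprace's relatively hyperbolic structure with the Dru{\cb{t}}u--Sapir invariance of peripheral structure, and ultimately compares asymptotic cones of the peripheral subgroups. Once Theorems~\ref{qis}, \ref{3mancovers}, and~\ref{racgQI} are in hand, the present theorem amounts to the bookkeeping described above, tracking how the pair $(m,n)$ controls the pair $(\ell,h)$ of the commensurable Coxeter group and hence which of the three quasi-isometry classes the surface amalgam inhabits.
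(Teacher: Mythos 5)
Your proof is correct, and its overall skeleton---model geometries from Construction~\ref{surfmodel}, Theorem~\ref{qis} for the positive direction, hyperbolicity plus Theorem~\ref{hypcase} (Malone, Dani--Thomas) for the $m=1$ cases, and Theorem~\ref{racgQI} as the discriminator between the two non-hyperbolic classes---matches the paper's. The one place you diverge is the bridge from $\cC$ to $\cW$ in the negative direction when $m,m'\geq 2$: the paper stays entirely geometric, noting via Construction~\ref{racgmodel} that the standard model space of type $(k,k,1)$, to which $G$ has already been reduced by Theorem~\ref{qis}, is itself the model geometry of a right-angled Coxeter group of linear degree $k$ (so $k=2$ when $m=n=2$ and $k=3$ when $n\geq 3$), and then applies Theorem~\ref{racgQI}. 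You instead invoke the commensurability Theorem~\ref{3mancovers} and read off the invariants $\ell=2(mn-m-n+1)=2(m-1)(n-1)$ and $h=2(m+n)$ of the commensurable Coxeter group; this computation is right (all $2(m+n)$ nonzero entries of the Euler characteristic vector are negative, so the hyperbolic degree is indeed $2(m+n)$), and since $(\ell,h)=(2,8)$ for $(m,n)=(2,2)$ while $\ell\geq 4$ and $h\geq 10$ when $n\geq 3$, Theorem~\ref{racgQI} separates the classes exactly as in the paper. Your route is somewhat heavier---it channels a purely quasi-isometric statement through the covering-space towers of Theorem~\ref{3mancovers}---but it is logically sound and has the mild bonus of recording explicitly which pair $(\ell,h)$, hence which part of $\cW$, each $(m,n)$ lands in; the paper's version obtains the same conclusion from the lighter observation that the reduced standard model space is already a Coxeter model geometry. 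Your hyperbolicity dichotomy ($\Z^2$ in $\presentation{a,b}{a^m=b^n}$ for $m\geq 2$, hyperbolic model space for $m=1$) is at the same level of detail as the paper's corresponding assertion.
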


\begin{proof}
Let $G \in \mathcal{C}_{m,n}$ and $G' \in \mathcal{C}_{m',n'}$. The groups $G$ and $G'$ are hyperbolic if and only if $m=m'=1$. In this case, by \cite{malone}, $G$ and $G'$ are quasi-isometric if and only if $n=n'$. 

Suppose now that $m,m'\geq 2$. By Construction~\ref{surfmodel} and Theorem~\ref{qis}, if $m= n = 2$, $G$ is quasi-isometric to the standard model space of type $(2,2,1)$ and if $n>2$, $G$ is quasi-isometric to the standard model space of type $(3,3,1)$. Similarly, if $m' = n'=2$, $G'$ is quasi-isometric to the standard model space of type $(2,2,1)$ and if $n'>2$, $G'$ is quasi-isometric to the standard model space of type $(3,3,1)$. By Construction~\ref{racgmodel}, the standard model space of type $(k,k,1)$ is quasi-isometric to a right-angled Coxeter group with underlying graph a generalized $\Theta$--graph and linear degree $k$. Thus, if $m,m'\geq 2$, by Theorem~\ref{racgQI}, $G$ and $G'$ are quasi-isometric if and only if condition (2) holds or condition (3) holds.
\end{proof}

We conclude by remarking that each surface amalgam in $\mathcal{C}_{m,n}$ is quasi-isometric to a right-angled Coxeter group defined by a generalized $\Theta$--graph. 

\begin{cor}
\label{cor:main3}
Let $m, n \geq 1$, and let $G \in \mathcal{C}_{m,n}$. Then $G$ is quasi-isometric to one of the following right-angled Coxeter groups with nerve a generalized $\Theta$--graph:
\begin{enumerate}
\item If $m=1$, then $G$ is quasi-isometric to a right-angled Coxeter group $W_{\Theta}$ where $\Theta$ is a generalized $\Theta$--graph with linear degree $0$ and hyperbolic degree $2n+2$.
\item If $m=n=2$, then the group $G$ is quasi-isometric to a right-angled Coxeter group $W_{\Theta}$ where $\Theta$ is a generalized $\Theta$--graph with linear degree $2$ and hyperbolic degree at least~$1$.
\item If $m\geq 2$ and $n\geq 3$, then $G$ is quasi-isometric to a right-angled Coxeter group $W_{\Theta}$ where $\Theta$ is a generalized $\Theta$--graph with linear degree at least $3$ and hyperbolic degree at least~$1$.
\end{enumerate}
\end{cor}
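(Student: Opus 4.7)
The plan is to match each surface amalgam $G \in \cC_{m,n}$ with an explicit right-angled Coxeter group $W_\Theta$ of the prescribed linear and hyperbolic degree by comparing their model geometries. By Construction~\ref{surfmodel}, $G$ acts geometrically on a model space of type $(m,n,\XX,2)$; by Construction~\ref{racgmodel}, $W_\Theta$ with linear degree $\ell$ and hyperbolic degree $h$ acts geometrically on a model space of type $(\ell,\ell,\XX',h)$. Since quasi-isometric groups have quasi-isometric model geometries, it suffices in each case to produce admissible $(\ell,h)$ for which these two model spaces are quasi-isometric.

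Cases~(2) and~(3) are immediate from Theorem~\ref{qis}. For case~(2), when $m=n=2$, the model space for $G$ is of type $(2,2,\XX,2)$, and Theorem~\ref{qis}(II) places its quasi-isometry class equal to that of the standard model of type $(2,2,1)$; any $\Theta$ with $\ell=2$ and $h\geq 1$ yields a RACG model of type $(2,2,\XX',h)$ in the same class. Case~(3) is analogous via Theorem~\ref{qis}(I): the amalgam model $(m,n,\XX,2)$ with $m\geq 2$ and $n\geq 3$ and any RACG model $(\ell,\ell,\XX',h)$ with $\ell\geq 3$, $h\geq 1$ are both quasi-isometric to the standard model of type $(3,3,1)$.

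Case~(1) is the one that requires a genuinely new argument, since $G$ is hyperbolic and the two sides have model spaces of different ``types''. When $m=1$, the tree $T_{1,n}$ is the star $K_{1,n}$, a finite tree of diameter $2$, so $T_{1,n}\times \R$ is a bounded thickening of any one of its $n+1$ essential lines. The plan is to collapse each copy of $T_{1,n}\times \R$ in the model space of type $(1,n,\XX,2)$ onto a chosen essential line; this identifies the $n+1$ essential lines and gathers the $2(n+1)=2n+2$ fattened trees along the resulting line (each essential line of $T_{1,n}\times \R$ carries $s=2$ fattened trees). The collapse is the identity on each fattened tree and a quasi-isometry on each $T_{1,n}\times \R$ piece (the fibers have uniformly bounded diameter), so Proposition~\ref{gluing} pastes the local maps into a global quasi-isometry from the $(1,n,\XX,2)$ model space onto a model space of type $(0,0,\XX,2n+2)$. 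Any RACG $W_\Theta$ with linear degree $0$ and hyperbolic degree $2n+2$ has a model of type $(0,0,\XX',2n+2)$, and all such models lie in a single quasi-isometry class by Theorem~\ref{hypcase}. Composing these quasi-isometries gives $G$ quasi-isometric to the desired $W_\Theta$.

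The main obstacle is case~(1), because the two model space types do not match on the nose and neither Theorem~\ref{qis} nor a direct appeal to Theorem~\ref{hypcase} suffices. The collapsing construction resolves this, but one must check that the combinatorial incidence of the fattened trees is preserved: each fattened tree has $s=2$ boundary components on both sides of the collapse, and each still attaches to two distinct copies of the (collapsed or uncollapsed) ``linear'' vertex space. This is essentially automatic from the bipartite structure of the underlying trees of the graph-of-spaces decompositions, and is exactly what is needed to apply Proposition~\ref{gluing}.
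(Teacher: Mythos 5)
Your proposal is correct, and for cases~(2) and~(3) it is exactly the argument the paper has in mind: match the model space of type $(m,n,\XX,2)$ from Construction~\ref{surfmodel} with a RACG model space of type $(\ell,\ell,\XX',h)$ from Construction~\ref{racgmodel} and invoke Theorem~\ref{qis}, just as in the proof of Theorem~\ref{main1}. Where you diverge is case~(1). The paper states the corollary without proof, and the route it has already paved for $m=1$ is not a new geometric construction but an appeal to results in hand: either Theorem~\ref{3mancovers}, which for $m=1$ gives $2(mn-m-n+1)=0$ linear entries and $2(m+n)=2n+2$ hyperbolic entries, so $G$ is abstractly commensurable --- hence quasi-isometric --- to a RACG with linear degree $0$ and hyperbolic degree $2n+2$; or the cited hyperbolic classification (Malone, Dani--Thomas, Theorem~\ref{hypcase}) together with the remark that groups in $\cC_{1,n}$ are quasi-isometric to the fundamental group of $2n+2$ surfaces glued along one curve. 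Your collapsing argument --- projecting each $T_{1,n}\times\R$ piece onto an essential line, so that the $2(n+1)$ fattened trees attached along its $n+1$ essential lines become $2n+2$ fattened trees along a single line, and pasting via Proposition~\ref{gluing} --- is a valid and more self-contained alternative: it stays entirely at the level of coarse geometry and does not use the covering-space machinery of Section~\ref{sec:ACclasses}. Two small caveats: Construction~\ref{modelspaces} and Proposition~\ref{gluing} are stated for parameters $m,n,s\geq 1$, so you are implicitly extending them to the degenerate type $(0,0,\XX,s)$ in which the ``linear'' vertex spaces are lines; this is consistent with the paper's own usage (Theorem~\ref{hypcase}, and Construction~\ref{racgmodel} with $\ell=0$), but deserves a sentence. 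Also note that the tree-of-spaces structure guarantees each fattened tree meets a given $T_{1,n}\times\R$ piece along at most one boundary line, which is what makes your count of $2n+2$ distinct trees per collapsed line correct; you gesture at this with the bipartite-tree remark, and it is indeed all that is needed. The paper's route buys the stronger commensurability statement for free; yours buys an explicit quasi-isometry without invoking Theorem~\ref{3mancovers}.
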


%\clearpage

%%%%%%%%%%%%%%%%%%%%%%%%%%%%%%%%%%%%%%%%%%%%%%%%%%%%%%%%%%%
%%                BIBLIOGRAPHY
%%%%%%%%%%%%%%%%%%%%%%%%%%%%%%%%%%%%%%%%%%%%%%%%%%%%%%%%%%%
\bibliographystyle{alpha}
\bibliography{refs}

\end{document}